\newtheorem{thm}[subsubsection]{Theorem}
\newtheorem{lem}[subsubsection]{Lemma}
\newtheorem{prp}[subsubsection]{Proposition}
\newtheorem{crl}[subsubsection]{Corollary}
\newtheorem{THM}[subsection]{Theorem}
\theoremstyle{definition}
           \newtheorem{dfn}[subsubsection]{Definition}
           \newtheorem{exm}[subsubsection]{Example}
\newcommand{\an}{\mathit{an}}
\newcommand{\End}{\operatorname{End}}
\newcommand{\Hom}{\mathrm{Hom}} 
\newcommand{\id}{\mathrm{id}}
\renewcommand{\int}{\mathit{int}}
\newcommand{\iso}{\mathit{iso}}
\newcommand{\Ker}{\mathrm{Ker}}
\newcommand{\mx}{\mathit{max}}
\newcommand{\Map}{\operatorname{Map}}
\mathchardef\mhyphen="2D
\newcommand{\rad}{\operatorname{rad}}
\newcommand{\Span}{\operatorname{Span}}
\newcommand{\spec}{\mathtt{spec}}
\newcommand{\Stab}{\mathtt{Stab}}
\newcommand{\bA}{\mathbb{A}}
\newcommand{\cA}{\mathcal{A}}
\newcommand{\cO}{\mathcal{O}}
\newcommand{\cR}{\mathcal{R}}
\newcommand{\fg}{\mathfrak{g}}
\newcommand{\fh}{\mathfrak{h}}
\newcommand{\fm}{\mathfrak{m}}
\newcommand{\C}{\mathbb{C}}
\newcommand{\Z}{\mathbb{Z}}
\begin{document}

\title[]{Around the center}
\author{M.~Gorelik}
\address{Weizmann Institute of Science}
\email{maria.gorelik@gmail.com}
\author{V.~Hinich}
\address{University of Haifa}
\email{vhinich@gmail.com}
\author{V. Serganova}
\address{UC Berkeley}
\email{serganov@math.berkeley.edu}
 
\begin{abstract} 
The center of a semisimple Lie algebra can be described as the algebra of $W$-invariant functions on the dual of the Cartan subalgebra. The centers of many 
Lie superalgebras have a similar description, but the defining equivalence relation on the dual of the Cartan subalgebra
is not given by a finite group action.
Lagrangian equivalence relations that we introduce generalize the action of a subgroup of the orthogonal group. Using them, we present a new proof
of a result by Ian Musson about the centers of Lie superalgebras. Our proof is not based on a case-by-case analysis.
\end{abstract}
\maketitle

\section{Introduction}

\subsection{}

Let $V$ be a complex vector space and $R\subset V\times V$ be an equivalence relation. We denote by $\C[V]=SV^*$ the ring of polynomial
functions on $V$ and by $\C[V]^R$ the subring of invariant functions,
$\C[V]^R=\{f\in\C[V]|\ f(x)=f(y)\ \forall (x,y)\in R\}$.

This notion generalizes the invariant subring $\C[V]^G$ where $G$
is a group of automorphisms of $V$: the group action defines an equivalence relation and the two notions of invariant subring coincide.
In this case the quotient $V/G$ is an affine variety
with the ring of functions $\C[V]^G$. In particular, the set of 
maximal ideals $\spec\ \C[V]^G$ identifies with the set of $G$-orbits
in $V$. 
 
Let $\fg$ be a semisimple Lie algebra. The center $Z\fg$
of the enveloping algebra of $\fg$ identifies, via Harish-Chandra map, with the invariant subring $\C[V]^W$
where $V=\fh^*$ is dual to the Cartan subalgebra and $W$ the Weyl group. Therefore the maximal ideals of $\C[V]^W$ describe the central characters 
and they correspond to $W$-orbits in $\fh^*$.

Having in mind an application to the description of the centers of 
enveloping algebras of finite dimensional Lie superalgebras, we study
more general quotients $V/R$.

\subsection{Generalities}
There is a correspondence between relations $R\subset V\times V$ and
subrings of $\C[V]$, similar to the relation between subgroups
of the Galois group and field extensions in the Galois theory.

To any relation $R\subset V\times V$ we assign the subring
$\cA(R):=\C[V]^R$; in the opposite direction, we assign to any subring
$A\subset\C[V]$ the relation $\cR(A)$ consisting of all pairs $(x,y)$
such that $f(x)=f(y)$ for all $f\in A$.

It is obvious that $\cA(\cR(A))\supset A$ for any $A\subset\C[V]$ and
$\cR(\cA(R))\supset R$ for any $R\subset V\times V$. Moreover, it is
standard that $\cA(\cR(\cA(R))=\cA(R)$ and $\cR(\cA(\cR(A))=\cR(A)$. 

\begin{dfn}
\label{dfn:sat-detect}
\begin{itemize}
\item[1.] A subring $A\subset\C[V]$ is called {\sl saturated}
if $A=\cA(\cR(A))$.
\item[2.]A relation $R\subset V\times V$ is called {\sl saturated} if
$R=\cR(\cA(\cR(R))$.

\end{itemize}
\end{dfn}
 Saturated relations are automatically equivalence relations.
There is a one-to-one correspondence between
saturated equivalence relations and 
saturated subrings. 
 
\begin{dfn}
An equivalence relation $R$  on $V$ is called  {\sl detectable} if the embedding 
$\C[V]^R\to\C[V]$ establishes a bijection
between the set of maximal ideals of $\C[V]^R$ and the quotient $V/R$.
In other words, for any maximal ideal $\fm\subset\C[V]^R$ the set
of common zeroes of $f\in\fm$ is an equivalence class with respect to $R$ (in particular, this set is nonempty).
\end{dfn}

Note the following

\begin{lem}
\label{lem:detsat}
Any detectable relation is saturated.
A saturated relation $R$ is detectable  if, in addition, 
for every maximal ideal $\fm\subset\C[V]^R$
its extension $\C[V]\fm$ is a proper ideal in $\C[V]$.
\end{lem}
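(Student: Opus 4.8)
The plan is to reduce everything to a dictionary between points of $V$ and maximal ideals of $A:=\C[V]^R$. Since the constant functions lie in $A$, every evaluation $A\to\C$, $f\mapsto f(x)$, is surjective, so $\fm_x:=\{f\in A\mid f(x)=0\}$ is a maximal ideal of $A$ for each $x\in V$; for a maximal ideal $\fm\subset A$ write $Z(\fm):=\{z\in V\mid f(z)=0\ \text{for all}\ f\in\fm\}$. The basic observation I would record first is that, for $x,y\in V$, the conditions $\fm_x=\fm_y$, ``$f(x)=f(y)$ for all $f\in A$'', and $(x,y)\in\cR(\cA(R))$ are equivalent: the middle statement is by definition the last one and trivially implies the first, while $\fm_x=\fm_y$ applied to $f-f(x)\cdot\one$ gives $f(y)=f(x)$. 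Recall also that $R$ is saturated exactly when $R=\cR(\cA(R))$. The first assertion is then immediate: one always has $R\subseteq\cR(\cA(R))$, and if $(x,y)\in\cR(\cA(R))$ then $\fm_x=\fm_y=:\fm$, so $x,y\in Z(\fm)$, which is a single $R$-class by detectability; hence $(x,y)\in R$ and $R=\cR(\cA(R))$.

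For the second assertion, assume $R=\cR(\cA(R))$ and that $\C[V]\fm$ is a proper ideal of $\C[V]$ for every maximal ideal $\fm\subset A$. Fix such an $\fm$; I must show $Z(\fm)$ is a nonempty $R$-class. Here the hypothesis does the work: $\C[V]\fm$ being proper, it is contained in some maximal ideal $\fn\subset\C[V]$, which by Hilbert's Nullstellensatz has the form $\fn=\{g\in\C[V]\mid g(x)=0\}$ for some $x\in V$. Then $\fm\subseteq A\cap\fn=\fm_x$, so $\fm=\fm_x$ by maximality, and $x\in Z(\fm)$ shows $Z(\fm)$ is nonempty. Finally $Z(\fm_x)$ is exactly the $R$-class of $x$: if $(x,z)\in R$ then each $f\in\fm_x\subset A$ is $R$-invariant with $f(x)=0$, hence $f(z)=0$, so $z\in Z(\fm_x)$; conversely, if $z\in Z(\fm_x)$ then $f-f(x)\cdot\one\in\fm_x$ vanishes at $z$, that is, $f(z)=f(x)$ for all $f\in A$, so $(x,z)\in\cR(\cA(R))=R$. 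Thus $R$ is detectable.

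Apart from the Nullstellensatz the argument is formal, and I do not expect a real obstacle; the one point worth stating carefully is that the properness hypothesis in the second part is precisely what makes the assignment $x\mapsto\fm_x$ surjective onto the maximal ideals of $A$ (this is also the only place the finite-dimensionality of $V$ is used). Once that is in hand, well-definedness of the map $V/R\to\spec A$ follows from $R$-invariance of the functions in $A$, injectivity is exactly saturation, and the computation of the fibre $Z(\fm_x)$ finishes the proof, so the content really is just the dictionary above together with one application of the Nullstellensatz.
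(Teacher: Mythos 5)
Your proof is correct and follows essentially the same route as the paper's: the first part is the paper's separating-function argument in contrapositive form, and the second part spells out (via the Nullstellensatz) the step the paper dismisses as ``obvious,'' namely that properness of $\C[V]\fm$ yields a common zero $x$ with $\fm=\fm_x$, whose zero set is then the $R$-class of $x$ by saturation. No gaps; you have merely made the dictionary between points and maximal ideals explicit where the paper leaves it implicit.
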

\begin{proof}
Let $(x,y)\not\in R$ and let $\fm_x$, $\fm_y$ be the maximal ideals
of $\C[V]^R$ defined by the equivalence classes of $x$ and $y$ respectively. Since $\fm_x\ne\fm_y$, there is $f\in\fm_x\setminus\fm_y$.
Then $f(x)=0$ but $f(y)\ne 0$. The converse is obvious as, if $\C[V]\fm$
is a proper ideal, the set common zeroes of $f\in\fm$ is nonempty.
\end{proof}

\begin{prp}
Let $R\subset V\times V$ be a saturated equivalence relation on $V$.
Then $R$ is Zariski closed in $V\times V$.
\end{prp}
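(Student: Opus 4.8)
The plan is to realize $R$ as a common zero locus of a family of polynomial functions on $V\times V$, from which Zariski closedness is immediate. First I would unwind the saturation hypothesis: by definition $R$ saturated means $R=\cR(\cA(R))$, so $R=\cR(A)$ for the subring $A:=\cA(R)=\C[V]^R\subseteq\C[V]$. In other words, $(x,y)\in R$ if and only if $f(x)=f(y)$ for every $f\in A$. Thus the only thing that matters is that $R$ lies in the image of the operator $\cR$; the fact that it is moreover an equivalence relation plays no role in this statement.

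Next I would observe that each single condition ``$f(x)=f(y)$'' cuts out a Zariski closed subset of $V\times V$. Using the canonical identification $\C[V\times V]=\C[V]\otimes_\C\C[V]$, for $f\in\C[V]$ put $\delta_f:=f\otimes 1-1\otimes f\in\C[V\times V]$; as a function, $\delta_f(x,y)=f(x)-f(y)$, so $\delta_f$ is regular on $V\times V$ and its vanishing locus $Z_f:=\{(x,y)\in V\times V\mid f(x)=f(y)\}$ is Zariski closed. Consequently $R=\cR(A)=\bigcap_{f\in A}Z_f$ is an intersection of Zariski closed subsets of $V\times V$ and is therefore Zariski closed. (If one wants a finite description, one may use that $\C[V\times V]$ is Noetherian to replace $A$ by a finite generating set of the ideal generated by the $\delta_f$, but this is not needed merely for closedness.)

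I do not expect a genuine obstacle: the only point requiring care is the identification of $\delta_f$ as a regular function on the product $V\times V$, which is immediate from $\C[V\times V]=\C[V]\otimes\C[V]$, and the rest is the formal fact that an arbitrary intersection of Zariski closed sets is Zariski closed. It is worth emphasizing that the argument uses nothing about $R$ beyond $R=\cR(A)$ for some subring $A\subseteq\C[V]$ — indeed it shows that \emph{every} relation of the form $\cR(A)$, saturated or not, is Zariski closed, and saturation enters only to guarantee that $R$ is of this form.
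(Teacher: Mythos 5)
Your proof is correct and is essentially identical to the paper's: both unwind saturation to write $R=\cR(\C[V]^R)$ and then realize $R$ as the common zero locus of the regular functions $f\otimes 1-1\otimes f$ on $V\times V$. Your added remark that the argument applies to any relation of the form $\cR(A)$ is accurate but does not change the substance.
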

\begin{proof}
Let $A=\C[V]^R\subset\C[V]$.
The relation $R$ is saturated, so $(x,y)\in R$ iff $f(x)=f(y)$ for any $f\in A$. In other words, $R$ is defined by the ideal of $\C[V\times V]=\C[V]\otimes\C[V]$ generated by the elements $f\otimes 1-1\otimes f$, $f\in A$.
\end{proof}

\subsubsection{An example}
Here is an example of a Zariski closed equivalence relation that is not
saturated. Let $V=\C$ and let $R$ be an equivalence relation on $\C$
generated by the condition $z\sim\frac{1}{z}$. The relation $R$ is Zariski closed
as it is given by the equation $(x-y)(xy-1)=0$. A polynomial $f$ satisfying the condition $f(z)=f(\frac{1}{z})$ is obviously constant,
so $\cA(R)=\C$ and $\cR(\cA(R))=V\times V$.

\subsection{}\label{fingr}
The finite group actions are detectable since $\C[V]$ is a finite
extension of the ring of invariants, and the claim follows from Going-Up Theorem. The aim of our work is
to prove detectability of certain equivalence relations appearing in Lie superalgebra theory.

\subsubsection{} 
\label{sss:superalgebras}
Let $\fg$ be a basic classical (finite-dimensional  
Kac-Moody) Lie superalgebra with a Cartan subalgebra $\fh$ and the Weyl group $W$. The Harish-Chandra projection identifies the center $Z\fg$ of the universal enveloping algebra of $\fg$ with a subalgebra of $S\fh^W$ that, by the works of A.~Segreev and V.~Kac, see~\cite{Sinv,Kac,MG0},
has form $\C[V]^R$ where $V=\fh^*$ and the equivalence relation $R$
is described as follows. Let $\Delta\subset\fh^*$ be the root system of 
$\fg$
with the bilinear form $\langle\cdot|\cdot\rangle$ and the set of isotropic roots $\Delta^\iso$. Then $R$ is generated by the Weyl group action, together with the condition
$$
(x,x+\lambda\alpha)\in R,\textrm{ for any }\alpha\in \Delta^\iso, 
\langle x|\alpha\rangle=0.
$$

If $\Delta^{\iso}$ is empty (for example, if $\fg$ is a Lie algebra),
the detectability of $R$ follows from the above observation~\ref{fingr}.
Detectability of  $R$ for basic classical $\fg$ has been recently established   by I.~Musson,
see~\cite{M}. 

\begin{thm}[I.~Musson]Let $\fg$ be a basic classical Lie superalgebra, $\fh$ a Cartan subalgebra, $Z\subset S\fh$ the image of the center of $U\fg$ 
under the Harish-Chandra projection. Then
\begin{itemize}
\item[1.] For any maximal ideal $\fm$ of $Z$ there exists $\lambda\in\fh^*$ so that $\fm=Z\cap\fm_\lambda$, where $\fm_\lambda$
is the ideal of polynomials vanishing at $\lambda$.
\item[2.] $Z\cap\fm_\lambda=Z\cap\fm_\mu$ if and only if $(\lambda,\mu)
\in R$ for the equivalence relation $R$ defined above.
\end{itemize}
\end{thm}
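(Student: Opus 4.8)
The statement is precisely the assertion that the equivalence relation $R$ of~\ref{sss:superalgebras} is \emph{detectable}: by the theorem of Sergeev and Kac recalled there (see \cite{Sinv,Kac,MG0}) one has $Z=\C[V]^R$ with $V=\fh^*$, so that part~1 says every maximal ideal of $\C[V]^R$ has the form $Z\cap\fm_\lambda$, while part~2 says the common zero locus of each such ideal is a single class of $R$. By Lemma~\ref{lem:detsat} it therefore suffices to prove two things: (a) that $R$ is \emph{saturated}, i.e.\ $R=\cR(\cA(R))$ --- equivalently that $f(\lambda)=f(\mu)$ for every $f\in Z$ forces $(\lambda,\mu)\in R$; and (b) that for every maximal ideal $\fm\subset Z$ the extended ideal $\C[V]\fm$ is a proper ideal of $\C[V]$. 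Granting these, Lemma~\ref{lem:detsat} shows $R$ is detectable, which is exactly the conjunction of parts~1 and~2 (for part~2 one also uses that constants lie in $Z$, so that $Z\cap\fm_\lambda=Z\cap\fm_\mu$ forces $f(\lambda)=f(\mu)$ for all $f\in Z$).

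I would treat (b) as a separate, more technical ingredient. Since $R$ contains the $W$-action we have a tower $Z\subset\C[V]^W\subset\C[V]$ in which the second extension is finite ($W$ being a finite group), so $\spec\ \C[V]\to\spec\ \C[V]^W$ is surjective; it is then enough to show $\spec\ \C[V]^W\to\spec\ Z$ is surjective, equivalently that the (automatically dense) image of $V$ in $\spec\ Z$ is Zariski closed. Away from the isotropic hyperplanes the classes of $R$ are ordinary finite $W$-orbits, so there $\spec\ Z$ agrees with the affine quotient $V/W$; the whole difficulty is that, despite the isotropic shifts collapsing the lines $t\mapsto\lambda+t\alpha$ (with $\alpha\in\Delta^\iso$, $\langle\lambda|\alpha\rangle=0$), the ring $Z$ must still separate points of $V$ running off to infinity --- i.e.\ $V\to\spec\ Z$ must be proper, hence surjective. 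This is where one genuinely uses that $\fg$ is a basic classical simple Lie superalgebra and not, say, the degenerate $\fgl(1|1)$ (for which $R$ is saturated but part~1 fails): one must show that the way $\Delta^\iso$ sits inside $\Delta$ relative to the form and to $W$ --- read off the root datum, without a case analysis --- forces an invariant that grows along every such escaping family.

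The heart of the argument is (a). Here one cannot be cavalier: the example of the pair $z\sim 1/z$ recalled above shows that the naive invariants --- functions that are $W$-invariant and satisfy $f(x)=f(x+t\alpha)$ whenever $\alpha\in\Delta^\iso$, $\langle x|\alpha\rangle=0$ --- could a priori cut out a relation strictly larger than $R$, and one must actually produce enough invariants. My plan is to use the \emph{Lagrangian equivalence relations} of the title. First localize: near a point $\lambda\in\fh^*$, $R$ is obtained by thickening the finite orbit of the stabilizer $W_\lambda$ along the isotropic subspace of $\fh^*$ spanned by those $\alpha\in\Delta^\iso$ with $\langle\lambda|\alpha\rangle=0$, a relation of Lagrangian type; then prove abstractly that Lagrangian equivalence relations are saturated, and patch. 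Concretely, given $(\lambda,\mu)\notin R$, I would induct on the minimal number of reflections and isotropic shifts involved in an attempt to connect $\lambda$ to $\mu$, peeling off one isotropic hyperplane $H=\{\langle\cdot|\alpha\rangle=0\}$ at a time: restrict to a transversal slice, use the non-degeneracy of $\langle\cdot|\cdot\rangle$ to recognise the slice as a lower-rank instance of the same situation, separate the images of $\lambda$ and $\mu$ there by an invariant supplied by the inductive hypothesis, and lift that invariant. I expect the lifting step to be the main obstacle, and it is precisely where the case-free input replaces Musson's case-by-case verification in \cite{M}: one must check that an invariant built on the slice extends to an honest element of $\C[V]^R$, i.e.\ stays simultaneously invariant under \emph{all} reflections and \emph{all} isotropic shifts, not merely those visible in the slice. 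Establishing that compatibility uniformly, from the structure of $\Delta^\iso$ inside $(\Delta,\langle\cdot|\cdot\rangle)$, is where the real work lies.
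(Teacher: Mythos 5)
Your reduction of the statement to detectability of $R$ (via the Sergeev--Kac identification $Z=\C[V]^R$) and the further reduction through Lemma~\ref{lem:detsat} to (a) saturation plus (b) properness of the extended ideals is sound, and your outline of (a) --- induct, restrict invariants to a slice $V_0/V_1$ along an isotropic hyperplane, separate there, and lift --- is indeed the shape of the argument in the paper (Theorem~\ref{thm:semireg-det}). The problem is that both steps you defer as ``where the real work lies'' \emph{are} the proof, and your proposal supplies no mechanism for either. The lifting step is Proposition~\ref{prp:surj}: the restriction $\C[V]^R\to\C[V_0/V_1]^{R'}$ is surjective with kernel $\rad(T)$, where $T$ is the minimal $W$-invariant polynomial vanishing on $V_0$. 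The paper does \emph{not} obtain surjectivity by extending individual invariants and checking compatibility with all reflections and shifts (the point at which you anticipate being stuck); it works in the category of spaces with functions on $X=V/R$, exhibits a short exact sequence of sheaves $0\to p_*\cO_{V/W}\xrightarrow{\,T\,}\cO_X\to i_*\cO_Z\to 0$, and kills the obstruction to lifting global sections by embedding $H^1(X,p_*\cO_{V/W})$ into $H^1(V/W,\cO_{V/W})=0$, since $V/W$ is an honest affine variety. Identifying the kernel with $T\C[V]^W=\rad(T)$ and identifying $\cO_Z(Z)$ with $\C[V_0/V_1]^{R'}$ both use that the discriminant is a single $W$-orbit $WV_0$ of a hyperplane ($1$-regularity); none of this is visible in your sketch.

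Your item (b) then dissolves, so the discussion of properness of $V\to\spec Z$ and of ``invariants growing along escaping families'' is a red herring: a maximal ideal $\fm\subset\C[V]^R$ either omits $T$, in which case it is a maximal ideal of $\C[V]^R_T=\C[V]^W_T$ (Lemma~\ref{lem:birational}) and one concludes by finiteness of $\C[V]$ over $\C[V]^W$, or it contains $\rad(T)$, in which case by the surjectivity above it is pulled back from $\C[V_0/V_1]^{R'}$ and the inductive hypothesis gives a nonempty zero set. Finally, for the induction to close you must know that the restricted relation is again of the same kind and again has a discriminant consisting of one $W$-orbit of hyperplanes at every stage. This is exactly the root-theoretic input that replaces Musson's case analysis and it is absent from your proposal: the reduction of the WGRS relation along an isotropic root is again induced by a WGRS (Proposition~\ref{prp:RtoR'}), and $W$ acts transitively on $\Delta^{\iso}/\pm1$ by involutions $w=s_{\alpha_2}s_{\alpha_1}$ acting trivially on $S^\perp/\Span(S)$ (Lemma~\ref{lem:two-step}); without these, semiregularity --- hence the entire induction --- is unavailable. (A small factual caveat: your $\fgl(1|1)$ parenthetical is dubious; in the two-dimensional model $\C+y\,\C[x,y]$ there is exactly one maximal ideal containing $y$, so detectability of the \emph{relation} does not fail there --- the degeneracy of $\fgl(1|1)$ lies elsewhere.)
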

These two properties together are equivalent to detectability by \ref{lem:detsat}.
Musson's proof is based on a case-by-case analysis. The main
motivation of our work was to find a general argument for this sort
of questions.

\subsection{} This paper is based on two essential observations. The first one
is the idea that, for an equivalence relation $R$ on $V$, the quotient
space $V/R$, endowed with an appropriate structure of a ringed space
(we use in this paper the Kempf's language of ``spaces with functions'',
see~\cite{K}) is a meaningful geometric object containing more  
information than merely the ring of global regular functions $\C[V]^R$.  

The other point is the description of a beautiful family of 
equivalence relations on $V$ that we call Lagrangian equivalence
relations. Almost all examples of equivalence relations appearing in
Lie superalgebra theory are Lagrangian; it is easy to see that the
restriction of a linear Lagrangian relation to an appropriate subfactor remains Lagrangian; this is convenient for the inductive argument in proving detectability.

\subsection{}
Here is a baby example of an invariant ring of the type we will be 
dealing with. Let $V=\Span_\C(v,w)$ and the equivalence relation is 
generated by the condition $cv\sim 0\ \forall c\in\C$. The invariant subring of 
$\C[x,y]$ ($x$ and $y$ form the dual basis to $\{v,w\}$) is 
$A=\C+y(x,y)$. This is a non-Noetherian ring with the fraction field
$\C(x,y)$. If a maximal ideal $\fm\subset A$ does not contain $y$, it corresponds to a maximal ideal of $A_y=\C[x,y]_y$. 
The ring $A/(y)$ is a 
(non-Noetherian) local ring whose maximal ideal has square zero. Therefore, there is a unique maximal ideal $\fm\subset A$ containing $y$.

We see that the quotient $V/R$ has a natural stratification, consisting in this example of two strata, the open stratum isomorphic to 
$\bA^2\setminus\bA^1$ and the closed stratum being a point.

\subsection{}
The argument both in Musson's and in the present paper goes by 
induction on strata
and it is of crucial importance that the restriction of functions
to the next stratum is surjective. The proof of surjectivity,
in the context of finite dimensional Lie superalgebras, was given in 
the preprint~\cite{MG}, based on Sergeev-Veselov description of the
centers of Lie superalgebras. In the present paper the surjectivity
follows from elementary sheaf theory.

\subsection{Acknowledgements}
The authors enjoyed numerous visits to Weizmann Institute and to
UC Berkeley, whose pleasant atmosphere is gratefully acknowledged.
The results of this work
were presented in Berkeley, Rehovot and Prague. We thank 
I.~Entova-Aizenbud, V.~Mazorchuk, D.~Rumynin, A.~Sherman and Ilya
Zakharevich, for valuable discussions. M.G. was supported by ISF 1957/21 grant. V.S. was supported by NSF grant 2019694.

\section{Lagrangian relations}

\subsection{Linear relations}
Let $V$ be a complex vector space of dimension $n$ endowed with a nondegenerate symmetric bilinear form $v,w\mapsto\langle v|w\rangle$.

We are going to study a class of equivalence relations on $V$ that we call Lagrangian equivalence relations.

We define a bilinear form $B$ ob $V\times V$ by the formula
$$
B((v,w),(v',w'))=\langle v|w\rangle-\langle v'|w'\rangle.
$$

\begin{dfn} A linear relation on $V$ is a vector subspace $L\subset V\times V$. A linear relation $L$ is called 
{\em isotropic}  if $B|_L=0$ and  is called a {\em linear Lagrangian relation} if it is isotropic of maximal dimension $\dim(V)$.
\end{dfn}
 Since $B$ is symmetric, $L$ is isotropic iff any $(x,y)\in L$ satisfies
  $\langle x|x\rangle=\langle y|y\rangle$. 

\begin{exm}For an endomorphism $\alpha:V\to V$ the graph $\Gamma_\alpha=
\{(v,\alpha(v))\}$ is a linear Lagrangian  relation iff $\alpha$ is an isometry.
\end{exm}

\subsection{Composition}

Given two relations $L,L'\subset V\times V$, their composition
$L'\circ L$ is defined as the collection of pairs $(x,z)$ for which there
exists $y$ with $(x,y)\in L$ and $(y,z)\in L'$. The composition of
linear relations is linear. The definition of composition is chosen so that, for $\alpha,\beta\in\End(V)$, 
$\Gamma_\alpha\circ\Gamma_\beta=\Gamma_{\alpha\circ\beta}$.

\begin{lem}If $L,L'$ are isotropic relations, their composition is 
also isotropic.
\end{lem}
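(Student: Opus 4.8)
The plan is to use the characterization of isotropy given just above the statement: a linear relation $M \subset V \times V$ is isotropic if and only if every pair $(x,y) \in M$ satisfies $\langle x|x\rangle = \langle y|y\rangle$. So it suffices to take an arbitrary element of $L' \circ L$ and verify this one scalar identity.

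Concretely, let $(x,z) \in L' \circ L$. By definition of composition there exists $y \in V$ with $(x,y) \in L$ and $(y,z) \in L'$. Since $L$ is isotropic, applying the quadratic-form criterion to $(x,y) \in L$ gives $\langle x|x\rangle = \langle y|y\rangle$. Since $L'$ is isotropic, applying it to $(y,z) \in L'$ gives $\langle y|y\rangle = \langle z|z\rangle$. Chaining these two equalities yields $\langle x|x\rangle = \langle z|z\rangle$. As $(x,z)$ was arbitrary in $L' \circ L$, the criterion shows $L' \circ L$ is isotropic. (One should also note in passing that $L' \circ L$ is a linear subspace — this was already recorded in the paragraph on composition — so it is indeed a linear relation to which the criterion applies, though the argument above only uses the pairwise condition and does not even need linearity.)

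There is essentially no obstacle here: the slickness comes entirely from using the pointwise quadratic reformulation of isotropy rather than unwinding the bilinear form $B$ on $V \times V$ directly. If one instead tried to verify $B|_{L' \circ L} = 0$ from the definition $B((a,b),(a',b')) = \langle a|b\rangle - \langle a'|b'\rangle$, one would be tempted to write a "witness" $y$ for each element and then struggle because the witness is not canonical and $B$ does not obviously interact well with composition; the quadratic-form criterion sidesteps this completely. The only mild point worth stating explicitly is that isotropy of a subspace $M$ is equivalent to the diagonal condition $B((m,m)) = 0$ for all $m \in M$, which holds precisely because $B$ is symmetric (polarization), and this was already observed in the text immediately following the definition.

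Thus the proof is three lines, and the "hard part" is really just recognizing which of the two equivalent formulations of isotropy to feed into the transitivity-style argument hidden in the definition of composition.
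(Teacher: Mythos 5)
Your proof is correct and is essentially identical to the paper's: both verify the pointwise quadratic condition $\langle x|x\rangle=\langle z|z\rangle$ for an arbitrary $(x,z)\in L'\circ L$ by chaining the two equalities through the witness $y$. Your added remark that linearity of $L'\circ L$ is needed for the quadratic criterion to be equivalent to $B|_{L'\circ L}=0$ is a fair point the paper leaves implicit, but it does not change the argument.
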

\begin{proof}
If $(x,y)\in L$ then $\langle x|x\rangle=\langle y|y\rangle$; if
$(y,z)\in L'$ then $\langle y|y\rangle=\langle z|z\rangle$. Thus,
  $\langle x|x\rangle=\langle z|z\rangle$ for any $(x,z)\in L'\circ L$.
\end{proof}
If $L$ is a linear relation, the relation $L^{-1}:=\{(x,y)|(y,x)\in L\}$ is also
linear. It is isotropic iff $L$ is isotropic.
For a linear relation $L$ we denote by $p_1,p_2$ the projections of
$L\subset V\times V$ to the two copies of $V$. We further denote
$K_i=\Ker(p_i)\subset L$.

\begin{lem}Let $L$ be a linear Lagrangian relation. Then $\dim K_1=\dim K_2$.
\end{lem}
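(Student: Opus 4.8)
The plan is to identify $K_1$ and $K_2$ with honest subspaces of $V$ and to compare their dimensions by combining the orthogonality encoded in $B$ with the maximality $\dim L=n$.

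First I would unwind the definitions. An element of $K_1=\Ker(p_1)$ has the form $(0,w)$ with $(0,w)\in L$, so $p_2$ restricts to an isomorphism of $K_1$ onto $W_1:=\{w\in V\mid (0,w)\in L\}$; likewise $K_2\cong W_2:=\{v\in V\mid (v,0)\in L\}$. Since $B$ is symmetric and $L$ is isotropic, $B$ vanishes on $L\times L$; evaluating it on $(0,w)\in L$ against an arbitrary $(v',w')\in L$ gives $0=B((0,w),(v',w'))=-\langle w|w'\rangle$. Hence $W_1$ is orthogonal, with respect to $\langle\cdot|\cdot\rangle$, to $p_2(L)$, i.e.\ $W_1\subseteq p_2(L)^{\perp}$; symmetrically $W_2\subseteq p_1(L)^{\perp}$.

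Next comes the dimension count. Nondegeneracy of $\langle\cdot|\cdot\rangle$ on $V$ gives $\dim p_i(L)^{\perp}=n-\dim p_i(L)$, while rank--nullity for $p_i\colon L\to V$ together with $\dim L=n$ gives $\dim p_i(L)=n-\dim K_i$. Putting these together,
$$
\dim K_1=\dim W_1\ \le\ \dim p_2(L)^{\perp}=n-\dim p_2(L)=\dim K_2,
$$
and interchanging the two factors (equivalently, running the same argument for $L^{-1}$, which is again a linear Lagrangian relation) yields $\dim K_2\le\dim K_1$. Hence $\dim K_1=\dim K_2$.

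I expect no serious obstacle here; the only thing to watch is that \emph{both} hypotheses get used exactly where needed --- isotropy of $L$ is what makes $B((0,w),\cdot)$ die on $L$, and the maximal dimension $\dim L=n$ is what upgrades the two opposite inequalities into a squeeze. (For a merely isotropic $L$ of dimension $m<n$ one only gets $|\dim K_1-\dim K_2|\le n-m$, and strict inequality can genuinely occur.) Alternatively one can note that $B$ is nondegenerate on $V\times V$, so a Lagrangian $L$ satisfies $L=L^{\perp_B}$; then $W_1=p_2(L)^{\perp}$ on the nose and the displayed inequality is already an equality, which is perhaps the cleanest route.
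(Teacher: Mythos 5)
Your proof is correct, but it takes a genuinely different route from the paper's. The paper argues via composition: it computes $\dim(L\circ L^{-1})$ by noting that the fiber product $L\times_V L^{-1}$ has dimension $n+k_2$ and that the kernel of its projection to $V\times V$ is isomorphic to $K_1$, so $\dim(L\circ L^{-1})=n+k_2-k_1$; since a composition of isotropic relations is isotropic and hence has dimension at most $n$, this gives $k_2\le k_1$, and symmetry finishes. You instead observe directly that isotropy of $L$ forces $p_2(K_1)\subseteq p_2(L)^{\perp}$ and then squeeze using rank--nullity together with $\dim L=n$ and nondegeneracy of the form on $V$. Your argument is more elementary --- it needs neither the composition operation nor the lemma that compositions of isotropic relations are isotropic --- and it establishes en route the orthogonality statement that the paper only proves afterwards (Lemma~\ref{lem:ker-ort-im}, asserting that $p_1(K_2)$ is the orthogonal complement of $p_1(L)$), whose proof in the paper in fact uses the equality $\dim K_1=\dim K_2$ as an input. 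Your closing remark that $B$ is nondegenerate on $V\times V$, so that $L=L^{\perp_B}$ and the inclusion $W_1\subseteq p_2(L)^{\perp}$ is actually an equality, is also correct and is arguably the cleanest formulation. One small point: the displayed definition of $B$ in the paper, $B((v,w),(v',w'))=\langle v|w\rangle-\langle v'|w'\rangle$, is a typo (it is not bilinear); you have silently used the intended form $B((v,w),(v',w'))=\langle v|v'\rangle-\langle w|w'\rangle$, which is the one consistent with the rest of the paper, so this is not a defect of your argument.
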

\begin{proof}
Denote $k_i=\dim K_i$.
Let us calculate the dimension of $L\circ L^{-1}$. The fiber product
$L\times_VL^{-1}$ has dimension $2k_2+(n-k_2)$ and the kernel of
its projection to $V\times V$ is $K_1\times_VK_1$ that is isomorphic
to $K_1$ as the projection $p_2:K_1\to V$ is injective. Therefore,
$\dim L\circ L^{-1}=n+k_2-k_1$. The composition is isotropic, so
$k_2\leq k_1$. By symmetricity we get the equality.
\end{proof}

\begin{dfn}
Let $L$ be a linear Lagrangian relation. The number $\dim K_1=\dim K_2$ is
called {\sl the atypicality} of $L$.
\end{dfn}
For example, a linear Lagrangian relation of atypicality $0$ is a graph of an isometry on $V$.
We denote by $a(L)$ the atypicality of $L$.

\begin{lem}
\label{lem:ker-ort-im}
Let $L$ be a linear Lagrangian relation. Then $p_1(L)$ is the orthogonal complement of $p_1(K_2)$. In particular, $p_1(L)$ and $p_2(L)$ are coisotropic subspaces of $V$.
\end{lem}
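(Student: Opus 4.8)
The plan is to prove the displayed equality $p_1(L)=p_1(K_2)^\perp$ by a double inclusion, the first by isotropy and the second by a dimension count, and then to deduce coisotropy from nondegeneracy of $\langle\cdot|\cdot\rangle$.

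First I would establish $p_1(L)\subseteq p_1(K_2)^\perp$. Note that $K_2=\Ker(p_2)$ consists precisely of the pairs $(u,0)\in L$, so every $u\in p_1(K_2)$ gives a vector $(u,0)\in L$. Take $v\in p_1(L)$, witnessed by some $(v,w)\in L$, and $u\in p_1(K_2)$. Since $L$ is isotropic, $0=B((v,w),(u,0))=\langle v|u\rangle$, so $v\perp u$; as $u\in p_1(K_2)$ was arbitrary this gives $v\in p_1(K_2)^\perp$.

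Next I would compare dimensions. The projection $p_1\colon L\to V$ has kernel $K_1$, hence $\dim p_1(L)=\dim L-\dim K_1=n-a(L)$, using $\dim L=n$ and the definition of atypicality together with the lemma $\dim K_1=\dim K_2$. On the other hand $p_1$ restricted to $K_2$ is injective (its kernel is $K_1\cap K_2=0$, since a pair $(u,0)$ with $u=0$ is zero), so $\dim p_1(K_2)=\dim K_2=a(L)$, whence $\dim p_1(K_2)^\perp=n-a(L)$. Thus both subspaces have dimension $n-a(L)$ and the inclusion above is an equality. Finally, for coisotropy: since $\langle\cdot|\cdot\rangle$ is nondegenerate, $\bigl(p_1(K_2)^\perp\bigr)^\perp=p_1(K_2)$, so $p_1(L)^\perp=p_1(K_2)$; and $p_1(K_2)\subseteq p_1(L)$ because $K_2\subseteq L$. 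Hence $p_1(L)^\perp\subseteq p_1(L)$, i.e. $p_1(L)$ is coisotropic. The statement for $p_2(L)$ follows by applying this to the linear Lagrangian relation $L^{-1}$, for which $p_1(L^{-1})=p_2(L)$ and $K_2(L^{-1})=K_1(L)$, giving $p_2(L)=p_2(K_1)^\perp$ coisotropic.

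I do not expect a serious obstacle here; the only points requiring care are the bookkeeping of dimensions (which rests on the already proved equality $\dim K_1=\dim K_2$ and on $\dim L=n$) and the use of nondegeneracy for $(W^\perp)^\perp=W$. As an alternative to the dimension count, one could instead argue entirely inside $V\times V$: writing $K_2=L\cap(V\times 0)$ and using $L=L^{\perp_B}$ together with the identity $(L\cap M)^{\perp_B}=L^{\perp_B}+M^{\perp_B}$ and $(V\times 0)^{\perp_B}=0\times V$, one gets $K_2^{\perp_B}=L+(0\times V)$; applying $p_1$ to both sides yields $p_1(K_2)^\perp=p_1(L)$ directly. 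Either route is short.
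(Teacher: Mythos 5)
Your proof is correct and follows essentially the same route as the paper's: the inclusion $p_1(L)\subseteq p_1(K_2)^\perp$ from isotropy of $L$, followed by the dimension count $\dim K_2+\dim p_1(L)=n$ (using $\dim K_1=\dim K_2$ and injectivity of $p_1|_{K_2}$) to force equality. Your explicit derivation of coisotropy via $p_1(L)^\perp=p_1(K_2)\subseteq p_1(L)$ and the reduction of the $p_2(L)$ case to $L^{-1}$ just spell out what the paper leaves implicit.
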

\begin{proof}
An element of $K_2$ is $(x,0)\in L$. If $y\in p_1(L)$, there exists
$(y,z)\in L$. Since $B((x,0),(y,z))=0$, we have $\langle x|y\rangle=0$.
Finally, since $p_1|_{K_2}$ is injective and $\dim K_2+\dim p_1(L)=n$,
we deduce that $p_1(K_2)$ is the orthogonal complement of $p_1(L)$. 
\end{proof}

Finally, here is the main result of this subsection.

\begin{prp}
\label{prp:monoid}
The set of linear Lagrangian relations on $V$ is a monoid with respect to
the composition.
\end{prp}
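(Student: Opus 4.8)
The plan is to verify the three defining properties of a monoid: an identity element, associativity, and closure. Associativity of composition is immediate from the set-theoretic definition, since $(L''\circ L')\circ L$ and $L''\circ(L'\circ L)$ both consist of the pairs $(x,w)$ for which there exist $y,z$ with $(x,y)\in L$, $(y,z)\in L'$ and $(z,w)\in L''$. The identity is the diagonal $\Delta=\Gamma_{\id}$, which is a linear Lagrangian relation because $\id$ is an isometry, and one checks directly that $\Delta\circ L=L=L\circ\Delta$ for every linear relation $L$. Thus the whole content is closure: if $L,L'$ are linear Lagrangian relations, then so is $L'\circ L$.

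For closure, recall that $L'\circ L$ is a linear relation and, being a composition of isotropic relations, is isotropic; hence $\dim(L'\circ L)\le n$, and it suffices to prove $\dim(L'\circ L)=n$. I would compute the dimension exactly as in the proof that $\dim K_1=\dim K_2$. Write $p_1',p_2'$ for the projections of $L'$ and $K_i'=\Ker(p_i')$. Realize $L'\circ L$ as the image of the fibre product $L\times_VL'$, formed along $p_2\colon L\to V$ and $p_1'\colon L'\to V$, under the linear map $\phi\colon((x,y),(y,z))\mapsto(x,z)$. Then
\[
\dim(L\times_VL')=\dim L+\dim L'-\dim\bigl(p_2(L)+p_1'(L')\bigr)=2n-\dim\bigl(p_2(L)+p_1'(L')\bigr),
\]
while $\Ker\phi$ consists of the elements $((0,y),(y,0))$ with $(0,y)\in L$ and $(y,0)\in L'$, so it is isomorphic to $p_2(K_1)\cap p_1'(K_2')$. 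Consequently
\[
\dim(L'\circ L)=2n-\dim\bigl(p_2(L)+p_1'(L')\bigr)-\dim\bigl(p_2(K_1)\cap p_1'(K_2')\bigr).
\]

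The last step is to match these two subspaces of $V$ as orthogonal complements of one another. Lemma~\ref{lem:ker-ort-im} applied to $L'$ gives $p_1'(L')=p_1'(K_2')^{\perp}$, and the same lemma applied to the (again Lagrangian) relation $L^{-1}$ gives the mirror identity $p_2(L)=p_2(K_1)^{\perp}$. Therefore
\[
p_2(K_1)\cap p_1'(K_2')=p_2(L)^{\perp}\cap p_1'(L')^{\perp}=\bigl(p_2(L)+p_1'(L')\bigr)^{\perp},
\]
and since $\langle\cdot|\cdot\rangle$ is nondegenerate, $\dim U+\dim U^{\perp}=n$ with $U=p_2(L)+p_1'(L')$. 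Substituting, $\dim(L'\circ L)=2n-n=n$, so $L'\circ L$ is Lagrangian, and the set of linear Lagrangian relations is closed under composition. I expect the one genuinely delicate point to be the bookkeeping in this dimension count: correctly identifying $\Ker\phi$ with $p_2(K_1)\cap p_1'(K_2')$ and correctly transporting Lemma~\ref{lem:ker-ort-im} through $L^{-1}$; once the identification $p_2(K_1)\cap p_1'(K_2')=(p_2(L)+p_1'(L'))^{\perp}$ is in place, the conclusion is forced.
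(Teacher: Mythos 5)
Your proof is correct and follows essentially the same route as the paper: realize $L'\circ L$ as the image of the fibre product $L\times_V L'$, count dimensions, and use Lemma~\ref{lem:ker-ort-im} to identify $p_2(K_1)\cap p_1(K_2')$ with $(p_2(L)+p_1(L'))^{\perp}$. The only differences are cosmetic (you express $\dim(L\times_VL')$ as $2n-\dim(p_2(L)+p_1(L'))$ rather than $k+k'+\dim(p_2(L)\cap p_1(L'))$, and you spell out the identity and associativity, which the paper leaves implicit).
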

\begin{proof} Let $L,L'$ be linear Lagrangian relations, $k=a(L)$ and 
$k'=a(L')$. The composition
$L'\circ L$ is isotropic. Let us calculate the dimension of $L'\circ L$. We have
$$
L'\circ L=\mathrm{Im}(L\times_VL'\to V\times V),
$$
so $\dim(L'\circ L)=k+k'+\dim(p_2(L)\cap p_1(L'))-
\dim(p_2(K_1)\cap p_1(K'_2))$.
Since $p_2(K_1)=p_2(L)^\perp$ and $p_1(K_2')=p_1(L')^\perp$, 
$p_2(K_1)\cap p_1(K_2')=(p_2(L)+p_1(L'))^\perp$ and therefore
$$
\dim(L'\circ L)=k+k'+n-\dim(p_2(K_1)+p_1(K'_2))-\dim(p_2(K_1)\cap p_1(K'_2))=n.
$$
 
\end{proof}

\begin{lem}
\label{lem:atypicality-ineq}
One has
$$
\max(a(L),a(L'))\leq a(L'\circ L)\leq a(L)+a(L').
$$
\end{lem}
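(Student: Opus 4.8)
The idea is to turn both inequalities into elementary dimension counts for the kernels of the two projections out of $L'\circ L$. By Proposition~\ref{prp:monoid} the composition $L'\circ L$ is again a linear Lagrangian relation, so its atypicality equals both $\dim K_1(L'\circ L)$ and $\dim K_2(L'\circ L)$; I will use the $K_1$-description for the upper bound and one inequality of the lower bound, and the $K_2$-description for the other. The only external input needed is Proposition~\ref{prp:monoid}; Lemma~\ref{lem:ker-ort-im} will not be used.

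First I would unwind the definition of composition to identify $K_1(L'\circ L)$. A pair $(0,z)$ lies in $L'\circ L$ iff there is $y$ with $(0,y)\in L$ and $(y,z)\in L'$; but $(0,y)\in L$ simply means $y\in A:=p_2(K_1(L))$, and since $p_2$ is injective on $K_1(L)$ this subspace has dimension $a(L)$. Hence $K_1(L'\circ L)=\{0\}\times p_2\bigl(L'\cap(A\times V)\bigr)$, so $a(L'\circ L)=\dim p_2\bigl(L'\cap(A\times V)\bigr)$. Symmetrically, writing $C:=p_1(K_2(L'))$ (a subspace of dimension $a(L')$), one gets $a(L'\circ L)=\dim K_2(L'\circ L)=\dim p_1\bigl(L\cap(V\times C)\bigr)$.

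For the upper bound I would restrict the first projection $L'\to V$ to the subspace $L'\cap(A\times V)$: its image is $A\cap p_1(L')$ and (using $0\in A$) its kernel is $K_1(L')$, so $\dim\bigl(L'\cap(A\times V)\bigr)=\dim\bigl(A\cap p_1(L')\bigr)+a(L')\le\dim A+a(L')=a(L)+a(L')$; since a linear projection does not increase dimension, $a(L'\circ L)=\dim p_2\bigl(L'\cap(A\times V)\bigr)\le a(L)+a(L')$. For the lower bound I would just note the inclusions $K_1(L')\subset L'\cap(A\times V)$ (because $K_1(L')\subset\{0\}\times V\subset A\times V$) and $K_2(L)\subset L\cap(V\times C)$; projecting these gives $p_2\bigl(L'\cap(A\times V)\bigr)\supseteq p_2(K_1(L'))$ and $p_1\bigl(L\cap(V\times C)\bigr)\supseteq p_1(K_2(L))$, hence $a(L'\circ L)\ge a(L')$ and $a(L'\circ L)\ge a(L)$ respectively, so $a(L'\circ L)\ge\max(a(L),a(L'))$.

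I do not expect a genuine obstacle here; the only thing to be careful about is the bookkeeping in the two kernel identifications — in particular that $p_2$ is injective on $K_1(L)$ (so $\dim A=a(L)$) and $p_1$ is injective on $K_2(L')$ (so $\dim C=a(L')$), and that the fibre-dimension computation for $L'\cap(A\times V)$ relies on $0\in A$. The main practical point is to set up the notation so that the symmetry between the $K_1$-side and the $K_2$-side is manifest and the argument need not be written out twice.
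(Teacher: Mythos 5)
Your proof is correct and is essentially the paper's argument in dual form: the paper runs the same elementary dimension count on the images $p_1(L'\circ L)\subset p_1(L)$ and $p_2(L'\circ L)\subset p_2(L')$ (using $a=n-\dim p_i$, which, like your kernel identities, rests on Proposition~\ref{prp:monoid}), whereas you track the kernels $K_1(L'\circ L)$, $K_2(L'\circ L)$ directly. The two bookkeepings are equivalent (cf.\ Lemma~\ref{lem:ker-ort-im}), so this is the same approach rather than a genuinely different route.
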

\begin{proof}
Obviously $p_1(L\circ L')\subset p_1(L)$ and
$p_2(L\circ L')\subset p_2(L')$. This immediately implies that
$a(L'\circ L)\geq\max(a(L),a(L'))$.
To get the second inequality, let us estimate the dimension of $p_1(L'\circ L)$.
Denote $P=p_2(L)\cap p_1(L')$. Obviously, $\dim(P)\geq n-a(L)-a(L')$,
so $\dim p_2^{-1}(P)\geq n-a(L')$. Thus, $\dim p_1(L'\circ L)=\dim p_1(p_2^{-1}(P))\geq
n-a(L)-a(L')$. This implies that $a(L'\circ L)\leq a(L)+a(L')$.
\end{proof}

\subsection{More on linear Lagrangian relations}

Linear Lagrangian relations of atypicality $0$ are defined by isometries of $V$.
A general linear Lagrangian relation can also be described in terms of
isometries.

\begin{lem}
\label{lem:L-isom}
Let $V_0$ and $V'_0$ be two coisotropic subspaces of $V$ of the same dimension. Let $V_1=V_0^\perp$ and $V_1'=(V_0')^\perp$. Then there is a one-to-one correspondence between linear Lagrangian relations $L$ with $p_1(L)=V_0$ and $p_2(L)=V_0'$ and isometries $\alpha:V_0/V_1\to V_0'/V_1'$.
\end{lem}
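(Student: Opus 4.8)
The plan is to construct the correspondence in both directions and check they are mutually inverse. First, given a linear Lagrangian relation $L$ with $p_1(L)=V_0$ and $p_2(L)=V_0'$, I would produce an isometry $\alpha\colon V_0/V_1\to V_0'/V_1'$ as follows. By Lemma~\ref{lem:ker-ort-im}, $K_2=\Ker(p_2)\subset L$ has $p_1(K_2)=V_0^\perp=V_1$, and symmetrically $p_2(K_1)=V_1'$; since the projection $p_1|_{K_2}$ is injective we have $K_2=\{(v,0):v\in V_1\}$ and likewise $K_1=\{(0,v'):v'\in V_1'\}$. Hence for $(v,v')\in L$ the class $v+V_1\in V_0/V_1$ determines $v'+V_1'\in V_0'/V_1'$ unambiguously (changing $v'$ by an element of $V_1'$ comes from adding an element of $K_1$), and this assignment is a well-defined linear map $\bar L\colon V_0/V_1\to V_0'/V_1'$; it is a bijection because $L^{-1}$ gives the inverse. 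It is an isometry because $B|_L=0$ means $\langle v|v\rangle=\langle v'|v'\rangle$ for all $(v,v')\in L$, hence by polarization $\langle v_1|v_2\rangle=\langle v_1'|v_2'\rangle$ for $(v_i,v_i')\in L$, and one checks the form on $V_0/V_1$ induced by $\langle\cdot|\cdot\rangle$ is exactly the nondegenerate form on the symplectic-type quotient of a coisotropic subspace.

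Conversely, given an isometry $\alpha\colon V_0/V_1\to V_0'/V_1'$, I set
$$
L_\alpha=\{(v,v')\in V_0\times V_0'\ :\ \alpha(v+V_1)=v'+V_1'\}.
$$
I would verify that this is a linear subspace of $V\times V$ of dimension $\dim V_1+\dim(V_0/V_1)+\dim V_1' = \dim V_0+\dim V_1' = \dim V_0 + (n-\dim V_0')=n$ using $\dim V_0=\dim V_0'$; that $B|_{L_\alpha}=0$, which follows because for $(v,v')\in L_\alpha$ the scalar $\langle v|v\rangle$ depends only on $v+V_1$ (as $V_1=V_0^\perp$ and $v\in V_0$) and equals the value of the induced form on $\alpha(v+V_1)=v'+V_1'$, which is $\langle v'|v'\rangle$; and that $p_1(L_\alpha)=V_0$, $p_2(L_\alpha)=V_0'$ (surjectivity of $\alpha$ and the fact that $V_1\subset V_0$, $V_1'\subset V_0'$). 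So $L_\alpha$ is indeed a linear Lagrangian relation with the prescribed images.

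Finally I would check the two constructions are inverse: starting from $L$, the relation $L_{\bar L}$ consists of all $(v,v')\in V_0\times V_0'$ with $\bar L(v+V_1)=v'+V_1'$, and this contains $L$ and has the same dimension $n$, hence equals $L$; starting from $\alpha$, the induced map $\overline{L_\alpha}$ is visibly $\alpha$ by construction. I expect the main obstacle to be purely bookkeeping rather than conceptual: one must be careful that the symmetric form $\langle\cdot|\cdot\rangle$, restricted to a coisotropic $V_0$ with radical $V_1=V_0^\perp$, descends to a \emph{nondegenerate} symmetric form on $V_0/V_1$, so that ``isometry'' on the quotients makes sense; and one must handle the polarization identity over $\C$ to pass between the quadratic condition $\langle v|v\rangle=\langle v'|v'\rangle$ on $L$ and the bilinear compatibility needed to see $\bar L$ is an isometry. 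Both are standard linear algebra over a field of characteristic $0$.
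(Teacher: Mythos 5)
Your proposal is correct and follows essentially the same route as the paper: identify $L\subset V_0\times V_0'$ with the graph of a map between the quotients $V_0/V_1\to V_0'/V_1'$, using $K_2=V_1\times\{0\}$ and $K_1=\{0\}\times V_1'$ (via Lemma~\ref{lem:ker-ort-im}) for well-definedness and the isotropy of $B$ for the isometry property. The paper's proof is just terser --- it states the forward construction and leaves the inverse construction and dimension count implicit, which you spell out.
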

\begin{proof}
We look at $L\subset V_0\times V_0'$ as at a relation between $V_0$ and $V_0'$. We claim that the same relation is given by an isometry 
$\alpha$. For $v\in V_0$ choose $(v,v')\in L$ and send $v\in V_0$
to $v'$. This map is uniquely defined modulo $V_1$ and $V_1'$ and it
preserves the norm as $0=B((v,v'),(v,v'))=\langle v|v\rangle-\langle v'|v'\rangle$.
\end{proof}

\begin{dfn}
A linear Lagrangian relation $E$ is called {\sl idempotent} if $E\circ E=E$.
\end{dfn}

\begin{exm}
Let $V_0$ be a coisotropic subspace of $V$ and let $V_1=V_0^\perp$.
Then 
$$
E_{V_0}:=\{(v,w)\in V\times V|v\in V_0,\ w\in V_1\}
$$
is an idempotent linear Lagrangian relation.
\end{exm}
We will prove soon that any idempotent linear Lagrangian relation is of the above form.

\begin{lem}
\label{lem:L-dec}
Any linear Lagrangian relation $L$ can be presented as
$$
L=\Gamma_\gamma\circ E_{V_0},
$$
where $\gamma$ is an isometry of $V$ and $V_0=p_1(L)$.
\end{lem}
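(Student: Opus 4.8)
I want to show any linear Lagrangian relation $L$ factors as $L=\Gamma_\gamma\circ E_{V_0}$ with $V_0=p_1(L)$ and $\gamma$ an isometry of $V$. The natural strategy is to use Lemma~\ref{lem:L-isom}: writing $V_1=V_0^\perp$, $V_0'=p_2(L)$, $V_1'=(V_0')^\perp$, the relation $L$ corresponds to an isometry $\alpha\colon V_0/V_1\xrightarrow{\ \sim\ }V_0'/V_1'$. The plan is to promote $\alpha$ to a genuine isometry $\gamma$ of all of $V$ and then verify that $\Gamma_\gamma\circ E_{V_0}$ recovers $L$ on the nose.

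First I would record what $E_{V_0}$ does: $E_{V_0}$ sends $v\in V_0$ to the full coset $v+V_1$... wait, more precisely $(v,w)\in E_{V_0}$ iff $v\in V_0$ and $w\in V_1$, so $E_{V_0}\circ$ something, or rather composing $\Gamma_\gamma$ after $E_{V_0}$: $(x,z)\in\Gamma_\gamma\circ E_{V_0}$ iff there is $y$ with $(x,y)\in E_{V_0}$ and $z=\gamma(y)$, i.e. $x\in V_0$ and $z\in\gamma(V_1)$. Hmm — that would force $p_2=\gamma(V_1)$, which is wrong unless $\gamma(V_1)=V_0'$, and $\dim V_1 = n - \dim V_0 \ne \dim V_0'$ in general. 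So I need to read $E_{V_0}$ the other way, or check the composition order carefully; presumably the intended reading gives $p_1(\Gamma_\gamma\circ E_{V_0})=V_0$ and the second projection the image under $\gamma$ of the appropriate space. The careful bookkeeping of exactly which coset data $E_{V_0}$ carries, and matching it against the isometry $\alpha$ of Lemma~\ref{lem:L-isom}, is the step I expect to be fiddly — essentially I must check that $E_{V_0}$ is itself the relation attached (via Lemma~\ref{lem:L-isom} applied with $V_0'=V_0$) to the identity isometry of $V_0/V_1$, so that composing with $\Gamma_\gamma$ multiplies the isometry data by $\gamma$.

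The key steps, in order: (1) apply Lemma~\ref{lem:L-isom} to get the isometry $\alpha\colon V_0/V_1\to V_0'/V_1'$ classifying $L$; (2) extend $\alpha$ to an isometry $\gamma$ of $V$ — since $\dim V_0/V_1 = \dim V - 2\dim V_1$ and the form on $V$ restricted to $V_0/V_1$ is nondegenerate, choose a hyperbolic complement pairing $V_1$ with a transversal and extend $\alpha$ by any isometry between the corresponding hyperbolic pieces (here I use that $V_0$ and $V_0'$ both have $\dim=n-\dim V_1$, guaranteed by Lemma~\ref{lem:L-isom}'s hypothesis that $L$ is Lagrangian so the atypicalities agree); (3) compute $\Gamma_\gamma\circ E_{V_0}$ and check, using Lemma~\ref{lem:L-isom} again (both relations are Lagrangian by Proposition~\ref{prp:monoid}, so each is determined by its two projections plus an isometry of subquotients), that it has $p_1=V_0$, $p_2=V_0'$, and induces $\alpha$ — hence equals $L$.

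\textbf{Main obstacle.} The genuinely nontrivial point is step (2): extending the subquotient isometry $\alpha$ to a global isometry $\gamma$ of $(V,\langle\cdot|\cdot\rangle)$. This is a Witt-extension-type argument — one picks isotropic complements to $V_1$ inside $V_0$ (a lift of $V_0/V_1$ on which the form is nondegenerate) and to $V_1'$ inside $V_0'$, uses $\alpha$ there, then extends over the hyperbolic plane-sums $V_1\oplus(\text{transversal})$ by sending a chosen basis of $V_1$ to one of $V_1'$ and completing the dual basis accordingly. Everything else (steps 1 and 3) is formal once the composition convention for $E_{V_0}$ is pinned down and one invokes that a Lagrangian relation is rigidly determined by its projections and the induced isometry.
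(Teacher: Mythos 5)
Your proof is correct and follows essentially the same route as the paper's: both arguments rest on Lemma~\ref{lem:L-isom} together with a Witt-type extension of the subquotient isometry to a global isometry of $V$ (the paper merely splits the extension into two steps, first choosing any $\gamma_0\in O(V)$ with $\gamma_0(p_1(L))=p_2(L)$ and then correcting by an isometry of $V$ preserving $V_0$ that induces the right map on $V_0/V_1$). Your worry about the composition convention is resolved by noting that the displayed formula for $E_{V_0}$ in the Example is a typo --- the definition actually used throughout the paper is $E_{V_0}=\{(v,v+w)\mid v\in V_0,\ w\in V_1\}$, which is the Lagrangian relation corresponding under Lemma~\ref{lem:L-isom} to the identity of $V_0/V_1$, exactly as you presumed.
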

\begin{proof}
Let $L$ be Lagrangian and $\gamma\in O(V)$.
Obviously  $p_1(\Gamma_\gamma\circ L)=p_1(L)$ and 
$p_2(\Gamma_\gamma\circ L)=\gamma(p_2(L))$.

Let $V_0:=p_1(L)$ and $V_0':=p_2(L)$. 
Choose
$\gamma\in O(V)$ such that $V_0'=\gamma(V_0)$. 
Then $V_0=p_1(\Gamma_{\gamma^{-1}}\circ L)=
p_2(\Gamma_{\gamma^{-1}}\circ L)$. So by the previous description it is defined by an isometry of $V_0/V_0^\perp$. Any such isometry can be lifted to an isometry of $V$
preserving $V_0$. This implies the claim.
\end{proof}

We are now ready to deduce the characterization of idempotent Lagrangian
relations.

\begin{prp}Let $E$ be an idempotent linear Lagrangian relation. Then $E=E_{V_0}$
for $V_0=p_1(E)$.
\end{prp}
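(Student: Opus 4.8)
The plan is to use Lemma~\ref{lem:L-dec} to write $E=\Gamma_\gamma\circ E_{V_0}$ with $V_0=p_1(E)$ and $\gamma$ an isometry of $V$, and then to show the idempotency forces $\gamma$ to act trivially on the relevant subfactor, so that $E=E_{V_0}$. First I would record the elementary facts about $E_{V_0}$: one has $p_1(E_{V_0})=V_0$, $p_2(E_{V_0})=V_1:=V_0^\perp$, $K_1(E_{V_0})=\{0\}\times V_1$, $K_2(E_{V_0})=V_0\times\{0\}$, hence $p_2(E_{V_0})=V_1\subseteq V_0$ (here coisotropy of $V_0$ is exactly $V_1\subseteq V_0$). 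The key point is then that $E_{V_0}\circ E_{V_0}=E_{V_0}$, which is immediate from the definition, and more generally $E_{V_0}$ absorbs compositions in a controlled way.

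Next I would analyze $E=\Gamma_\gamma\circ E_{V_0}$. Since $E$ is Lagrangian and idempotent, $p_1(E)=p_2(E)=V_0$; but from the decomposition $p_2(E)=p_2(\Gamma_\gamma\circ E_{V_0})=\gamma(p_2(E_{V_0}))=\gamma(V_1)$, so $\gamma(V_1)=V_0$. Wait --- that would be wrong unless $V_1=V_0$; the correct reading is that in Lemma~\ref{lem:L-dec} one chooses $\gamma$ with $V_0'=\gamma(V_0)$ where $V_0'=p_2(E)$, and here $V_0'=V_0$, so $\gamma$ preserves $V_0$ (equivalently $V_1$), and $\gamma$ induces an isometry $\bar\gamma$ of $V_0/V_1$. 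Under the correspondence of Lemma~\ref{lem:L-isom}, the Lagrangian relation $E$ with $p_1(E)=p_2(E)=V_0$ corresponds to an isometry of $V_0/V_1$, and tracing through the decomposition this isometry is exactly $\bar\gamma$. So it suffices to show $\bar\gamma=\id$.

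To see $\bar\gamma=\id$, I would compute $E\circ E$ in terms of $\bar\gamma$: composition of Lagrangian relations with the same source and target coisotropic subspace $V_0$ corresponds, via Lemma~\ref{lem:L-isom}, to composition of the associated isometries of $V_0/V_1$ --- this is the defining property of the composition recalled after the statement of Lemma~\ref{lem:L-isom} (it mimics $\Gamma_\alpha\circ\Gamma_\beta=\Gamma_{\alpha\circ\beta}$). Hence $E\circ E$ corresponds to $\bar\gamma\circ\bar\gamma=\bar\gamma^2$, while $E$ corresponds to $\bar\gamma$; idempotency $E\circ E=E$ gives $\bar\gamma^2=\bar\gamma$, so $\bar\gamma=\id$ since $\bar\gamma$ is invertible. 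Therefore $E$ is the Lagrangian relation on $V_0$ (extended to $V\times V$) corresponding to the identity of $V_0/V_1$, which is precisely $E_{V_0}$, and so $E=E_{V_0}$.

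The main obstacle I expect is the bookkeeping in the second paragraph: making precise that composition of linear Lagrangian relations sharing the coisotropic subspaces $V_0$ (source) and $V_0'$ (target), all equal here, corresponds under Lemma~\ref{lem:L-isom} to composition of isometries of the subquotients --- one must check that the ``error terms'' $V_1$, $V_1'$ behave correctly and that no extra kernel appears when composing (so that the composite relation still has $p_1=p_2=V_0$ rather than something smaller). This is a routine but slightly fiddly diagram chase with the projections $p_i$ and kernels $K_i$; once it is in place, the identity $\bar\gamma^2=\bar\gamma\Rightarrow\bar\gamma=\id$ finishes the proof cleanly, and the characterization $E=E_{V_0}$ follows. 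Alternatively, one can avoid Lemma~\ref{lem:L-isom} entirely and argue directly: from $E\circ E=E$ and $E=\Gamma_\gamma\circ E_{V_0}$, use $E_{V_0}\circ\Gamma_\gamma\circ E_{V_0}=E_{V_0'' }$-type identities to reduce to $\Gamma_{\gamma'}\circ E_{V_0}=E_{V_0}$ for the induced isometry, then note this forces $\gamma'|_{V_0}$ to be identity modulo $V_1$.
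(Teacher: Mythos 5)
Your overall strategy is the same as the paper's: write $E=\Gamma_\gamma\circ E_{V_0}$ via Lemma~\ref{lem:L-dec}, reduce to an isometry $\bar\gamma$ of $V_0/V_1$ via Lemma~\ref{lem:L-isom}, and conclude from $\bar\gamma^2=\bar\gamma$ that $\bar\gamma=\id$. That endgame is fine and matches the paper. The genuine gap is the sentence ``Since $E$ is Lagrangian and idempotent, $p_1(E)=p_2(E)=V_0$'': you never prove this, and it is exactly where all the work of the proposition is concentrated. The paper devotes the bulk of its argument to deriving $\gamma(V_0)=V_0$ (equivalently $p_2(E)=p_1(E)$) from the identity $E_{V_0}\circ\Gamma_\gamma\circ E_{V_0}=E_{V_0}$, by explicitly computing the triple composition and extracting the condition $\forall v\in V_0\ \exists w\in V_1:\gamma(v+w)\in v+V_1$. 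Nothing in your text replaces that computation, and Lemma~\ref{lem:L-isom} only becomes applicable \emph{after} one knows $p_1(E)=p_2(E)$, so it cannot be used to establish that equality. Your ``alternative'' sketch at the end has the same circularity: it already presupposes that the induced map preserves $V_0$.

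This step cannot be dismissed as bookkeeping, because the implication ``idempotent Lagrangian $\Rightarrow p_1(E)=p_2(E)$'' fails in general. Take $V=\C^2$ a hyperbolic plane with isotropic basis $e_1,e_2$, $\langle e_1|e_2\rangle=1$, and $L=\C e_1\times\C e_2$. Then $L$ is isotropic of dimension $2$, hence Lagrangian, and $L\circ L=L$ (the only admissible intermediate point is $y=0$), yet $p_1(L)=\C e_1\ne\C e_2=p_2(L)$ and $L\ne E_{\C e_1}=\C e_1\times\C e_1$. So some further input is indispensable --- for instance the hypothesis that $E$ contains the diagonal $\Delta_{V_0}$, which holds for the idempotents $L^{-1}\circ L$ of Lemma~\ref{lem:ll_1} actually used in the paper and which gives $p_2(E)\supseteq V_0$, hence $p_2(E)=V_0$ by dimension count. (The same example indicates that the statement, and the step in the paper's own proof deducing $\gamma(V_1)=V_1$ from ``$V_1=\Span\{v_1,\dots,v_k\}^\perp$'', deserve scrutiny.) A smaller slip: you read off $p_2(E_{V_0})=V_1$ and $K_2(E_{V_0})=V_0\times\{0\}$ from the displayed formula $E_{V_0}=V_0\times V_1$ in the example; that formula is not isotropic in general and is inconsistent with the description $E_{V_0}=\{(v,v+w)\mid v\in V_0,\ w\in V_1\}$ used everywhere else, for which $p_1(E_{V_0})=p_2(E_{V_0})=V_0$. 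This misreading is the source of the ``Wait --- that would be wrong'' detour in your second paragraph.
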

\begin{proof}
By Lemma~\ref{lem:L-dec}, $E=\Gamma_\gamma\circ E_{V_0}$ where 
$V_0=p_1(E)$ and $\gamma\in O(V)$.
Since $E$ is idempotent,
$$
E_{V_0}\circ\Gamma_\gamma\circ E_{V_0}=E_{V_0}.
$$ 
We will deduce first of all that $\gamma(V_0)=V_0$;
the rest will be very easy.

We have $E_{V_0}=\{(v,v+w)|v\in V_0,\ w\in V_1\}$ where $V_1=V_0^\perp$.
Then 
$$
E_{V_0}\circ\Gamma_\gamma\circ E_{V_0}=\{(v,\gamma(v+w)+w')|
v\in V_0,\ w,w'\in V_1,\ \gamma(v+w)\in V_0\}.
$$
Since this space coincides with $E_{V_0}$, we have  

$$
\forall v\in V_0\ \quad\exists w\in V_1:\gamma(v+w)\in v+V_1. 
$$

Choose elements $v_1,\ldots,v_k$ in $V_0$ whose images in $V_0/V_1$
form a basis. Choose $w_i,w_i'\in V_1$ such that
$$
\gamma(v_i+w_i)=v_i+w'_i.
$$
Replacing $v_i$ with $v_i+w_i$ and $w_i'$ with $w_i$ we simplify
the formulas and get
$$
\gamma(v_i)=v_i+w_i.
$$
Since $\gamma$ is an isometry and $V_1=V_0^\perp=\Span\{v_1,\ldots,v_k\}^\perp$, we deduce that $\gamma(V_1)=V_1$. This implies that 
$\gamma(V_0)=V_0$.

As a result, $E=\Gamma_\gamma\circ E_{V_0}$ satisfies $p_1(E)=p_2(E)=V_0$.
By Lemma~\ref{lem:L-isom} $E$ is described by an isometry $\alpha$ of $V_0/V_1$. Then
$E\circ E$ is described by the isometry $\alpha^2$, so $\alpha$ is idempotent and therefore $\alpha=1$.
\end{proof}

Note the following
\begin{lem}
\label{lem:ll_1}
For any linear Lagrangian relation $L$ the composition $L^{-1}\circ L$ is
idempotent.
\end{lem}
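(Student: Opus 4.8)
The plan is to compute $L^{-1}\circ L$ explicitly, observe that it is an ordinary equivalence relation on the subspace $p_1(L)$ — a ``linear congruence'' — and then invoke the trivial fact that any equivalence relation $R$, regarded as a subset of $V\times V$, satisfies $R\circ R=R$; idempotency of $L^{-1}\circ L$ is then immediate. Set $V_0=p_1(L)$ and $V_1=p_1(K_2)$, where $K_2=\Ker(p_2)\subset L$; since $K_2\subseteq L$ we have $V_1\subseteq V_0$ automatically. (If $L$ is Lagrangian, Lemma~\ref{lem:ker-ort-im} identifies $V_1$ with $V_0^\perp$, so that $L^{-1}\circ L$ will be precisely the idempotent relation $E_{V_0}$ of the Example above; but this refinement is not needed to obtain idempotency.)

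The one computation to carry out is the identity
$$
L^{-1}\circ L=\{(x,z)\in V\times V\ |\ x\in V_0,\ x-z\in V_1\}.
$$
For the inclusion ``$\subseteq$'': given $(x,z)\in L^{-1}\circ L$, choose $y$ with $(x,y)\in L$ and $(z,y)\in L$; then $x\in p_1(L)=V_0$, while $(x,y)-(z,y)=(x-z,0)$ lies in $L$ with vanishing second coordinate, hence in $K_2$, so that $x-z\in p_1(K_2)=V_1$. For ``$\supseteq$'': given $x\in V_0$ and $x-z\in V_1=p_1(K_2)$, there is an element of $K_2$ with first coordinate $x-z$, that is, $(x-z,0)\in L$; since $z=x-(x-z)\in V_0=p_1(L)$ there is $y$ with $(z,y)\in L$, and then $(x,y)=(x-z,0)+(z,y)\in L$, so $(x,z)\in L^{-1}\circ L$.

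Write $M$ for the right-hand side of the display. Because $V_1\subseteq V_0$, membership $(x,z)\in M$ forces $z\in V_0$ as well, so $M$ is simply the relation on $V_0$ given by $x-z\in V_1$: it is reflexive on $V_0$, symmetric, and transitive, i.e.\ an equivalence relation on $V_0$. Transitivity gives $M\circ M\subseteq M$; and for any $(x,z)\in M$ one has $(x,z)\in M$ and $(z,z)\in M$, hence $(x,z)\in M\circ M$. Therefore $M\circ M=M$, which is the assertion. I do not expect a genuine obstacle: the only step requiring attention is the displayed identity, and it uses nothing beyond elementary linear algebra — in particular the Lagrangian hypothesis is inessential, serving only (through Lemma~\ref{lem:ker-ort-im}) to recognize $V_1$ as $V_0^\perp$ and hence $L^{-1}\circ L$ as $E_{V_0}$.
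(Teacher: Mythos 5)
Your proof is correct, and it reaches the same concrete description of $E=L^{-1}\circ L$ as the paper does, but by a genuinely more elementary route. The paper's proof is a dimension count: it observes that $E$ contains the diagonal $\Delta_{V_0}$ and that $p_1(E)=V_0$, and then uses the fact that $E$ is again Lagrangian (hence of dimension $n$, via Proposition~\ref{prp:monoid}) to force the direct-sum decompositions $E=\Delta_{V_0}\oplus K_1=\Delta_{V_0}\oplus K_2$ and conclude $E=E_{V_0}$. Your argument instead computes $L^{-1}\circ L$ by hand as the congruence-modulo-$p_1(K_2)$ relation on $p_1(L)$, using only that $L$ is a subspace; idempotency then drops out because a reflexive-on-$V_0$, transitive relation trivially satisfies $M\circ M=M$. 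What this buys you is generality (the statement $L^{-1}\circ L\circ L^{-1}\circ L=L^{-1}\circ L$ holds for an arbitrary linear relation) and independence from the monoid proposition; what the paper's version buys is brevity once the Lagrangian machinery is in place. One small point to make explicit: the paper's notion of ``idempotent'' is defined for linear \emph{Lagrangian} relations, and the lemma is later used precisely to conclude that $R$ contains the component $E_{V_0}$ with $V_1=V_0^\perp$; so you do need the identification $p_1(K_2)=p_1(L)^\perp$ from Lemma~\ref{lem:ker-ort-im} (or, alternatively, Proposition~\ref{prp:monoid}) to know that your $M$ is the Lagrangian relation $E_{V_0}$ rather than just an abstractly idempotent linear relation. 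Your parenthetical remark covers this, so nothing is missing --- just be aware that this ``refinement'' is in fact the form of the statement the paper relies on.
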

\begin{proof}
Let $E=L^{-1}\circ L$ and let $V_0=p_1(L)$. Then $V_0=p_1(E)$ and 
$E$ contains the diagonal $\Delta_{V_0}=\{(v,v)|v\in V_0\}$. This leads
to the direct decompositions
$E=\Delta_{V_0}\oplus K_1=\Delta_{V_0}\oplus K_2$ that implies
$E=E_{V_0}$.
\end{proof}

\subsection{Lagrangian equivalence relations}

\begin{dfn}
\label{dfn:lagrangian-relation}
An equivalence relation $R\subset V\times V$ that is a union of linear Lagrangian relations, $R=\cup_{\alpha\in A}L^\alpha$, is called a Lagrangian equivalence relation. 
\end{dfn}

Note: a Lagrangian equivalence relation is not a linear Lagrangian relation!

In this paper we are mostly interested in 
Lagrangian equivalence relations having a finite number of components.

In other words, a Lagrangian equivalence relation is defined by a 
collection $\{L^\alpha\}$ of linear Lagrangian relations, with unit (the diagonal $V\subset V\times V$), closed under the composition and taking inverse. 

\begin{dfn}
Given a Lagrangian equivalence relation $R=\cup_{\alpha\in A}L^\alpha$,
a subspace $p_1(L^\alpha)\subset V$   is called a
{\sl special coisotropic subspace}
of $V$.
\end{dfn}
If $V_0$ is a special coisotropic subspace with $V_1=V_0^\perp$,
$R$ contains an idempotent component $E_{V_0}:=\{(v,v+w)|v\in V_0,w\in V_1\}$ by Lemma~\ref{lem:ll_1}.
Special coisotropic subspaces are in one-to-one correspondence with the idempotent components of $R$.  

\subsubsection{The Weyl group}

Let $R=\cup_{\alpha\in A}L^\alpha$ be a Lagrangian equivalence relation. The components of
atypicality $0$ form a group that will be called the Weyl group $W$ of $R$.
Any $s\in W$ is an isometry of $V$ such that the graph $\Gamma_s$ is a
component of $R$. Equivalently, $W$ is the group of invertible elements 
of the monoid $R$. We think of $W$ as a group of isometries of $V$
rather than  the group of invertible components of $R$.  

The group $W$ acts on $\{L^\alpha,\alpha\in A\}$ by composing on the right and on the left.
We have some obvious identities
\begin{itemize}
\item[1.] $p_1(\Gamma_s\circ L^\alpha)=p_1(L^\alpha)$, \
$p_2(\Gamma_s\circ L^\alpha)=s(p_2(L^\alpha))$.
\item[2.] $E_{s(V_0)}=\Gamma_s\circ E_{V_0}\circ\Gamma_{s^{-1}}$.
\end{itemize}

\subsection{Reduction}

Let $R$ be a Lagrangian equivalence relation on $V$ having a finite number of components and let $V_0$ be a special
coisotropic subspace. Set $V_1=V_0^\perp$. The main result of this subsection, Proposition~\ref{prp:red}, asserts that the restriction
of $R$ to $V_0$ induces
a Lagrangian equivalence relation on $V_0/V_1$ which we will denote by $R|_{V_0/V_1}$.

\subsubsection{}

Note that $(v,v+w)\in R$ for any $v\in V_0$ and $w\in V_1$, therefore
$R\cap(V_0\times V_0)$ is invariant with respect to the shifts
by $V_1\times V_1$. 
The intersection $R\cap(V_0\times V_0)=\cup_{\alpha\in A} L^\alpha\cap(V_0\times V_0)$ is a finite union of vector subspaces of $V_0\times V_0$; since it is invariant with respect to $V_1\times V_1$,
it is the union of the $V_1\times V_1$-invariant components
among  $L^\alpha\cap(V_0\times V_0)$. 

\begin{lem}Let $L$ be a linear Lagrangian relation and let $V_0$ be a coisotropic subspace of $V$ with $V_1=V_0^\perp$. Then the intersection 
$L\cap(V_0\times V_0)$ is $V_1\times V_1$-invariant iff  
$L\subset V_0\times V_0$.
\end{lem}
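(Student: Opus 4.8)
The plan is to establish the two implications separately; the ``if'' direction is immediate, so the content is in the ``only if'' direction. If $L\subset V_0\times V_0$, then $L\cap(V_0\times V_0)=L$, and $L$ is $V_1\times V_1$-invariant precisely because $R$ contains $E_{V_0}$: indeed for $(v,w)\in L$ and $(u_1,u_1')\in V_1\times V_1$ one has $v+u_1\in V_0$ (as $V_1\subset V_0$, $V_0$ being coisotropic) and similarly $w+u_1'\in V_0$, and the shift preserves membership in $L$ since $L$ is a linear subspace and $V_1\times V_1\subset L$. This last point is the only thing to check: the diagonal shifts by $(u_1,u_1)$ lie in $L^{-1}\circ L\cap\Delta=\Delta_{V_0}$, hence in $L$ after noting $L\supset K_1$; more directly, $V_1\times\{0\}$ and $\{0\}\times V_1$ sit inside $L$ because, by Lemma~\ref{lem:ker-ort-im}, $p_1(K_2)=V_1$ and $p_2(K_1)=V_1$ once $p_1(L)=p_2(L)=V_0$ (which holds when $L\subset V_0\times V_0$ together with $L$ being Lagrangian, since $\dim p_i(L)=n-a(L)$ forces both projections to be all of $V_0$ only if $\dim V_0=n-a(L)$ — so in fact I should not assume $p_i(L)=V_0$ but argue directly). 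Let me restructure.

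The cleaner route for ``if'' is: assume $L\subset V_0\times V_0$. Then $L\cap(V_0\times V_0)=L$. To see $L$ is $V_1\times V_1$-invariant, it suffices to show $V_1\times V_1\subset L$; then invariance follows from linearity of $L$. Now $L$ is Lagrangian, so $\dim L=n$, and $L\subset V_0\times V_0$ gives $L\subset p_1(L)\times p_2(L)$ with $p_i(L)\subset V_0$. Since $R$ contains the idempotent $E_{V_0}$, and the diagonal $\Delta_V\subset V\times V$ is the unit component, composing $L$ with $E_{V_0}$ on either side does not change $L$ only when... — rather than chase this, observe directly: $B$ restricted to $V_1\times V_1\subset V\times V$ vanishes (as $\langle\cdot|\cdot\rangle$ kills $V_1$), so $V_1\times V_1$ is an isotropic subspace of dimension $2\dim V_1=2a$, where $a=\dim V_1$; and one checks $V_1\times V_1\subset V_0\times V_0$. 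The subspace $L+(V_1\times V_1)$ is then isotropic provided $B$ pairs $L$ with $V_1\times V_1$ trivially, which holds because $p_1(L),p_2(L)\subset V_0=V_1^\perp$; by maximality of $\dim L=n$ we get $V_1\times V_1\subset L$. This is the key computation and the only place where coisotropy of $V_0$ and the Lagrangian condition interact.

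For the converse, suppose $L\cap(V_0\times V_0)$ is $V_1\times V_1$-invariant but $L\not\subset V_0\times V_0$; I will derive a contradiction. Consider $L':=L\cap(V_0\times V_0)$, a linear subspace strictly smaller than nothing a priori, but invariance means $V_1\times V_1$ acts on it, so $V_1\times V_1\subset L'+\{(x,y): (x,y) \text{ fixes } L' \text{ setwise}\}$ — more usefully, $V_1\times V_1\subseteq L'$ as before would force, via a dimension count using that $L$ has atypicality $a(L)$ and $\dim(V_0\times V_0)=2\dim V_0$, the containment $L\subset V_0\times V_0$. Concretely: $V_1\times V_1\subset L$ implies $K_1\supset V_1\times\{0\}$ wait — $V_1\times\{0\}\subset L$ gives $V_1\subset p_1(K_2)$... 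The main obstacle, which I flag now, is organizing the dimension bookkeeping so that $V_1\times V_1\subseteq L$ genuinely forces $p_1(L),p_2(L)\subseteq V_0$: the point should be that $p_1(L)\supseteq$ something orthogonal combined with Lemma~\ref{lem:ker-ort-im} ($p_1(L)=p_1(K_2)^\perp$) and the fact that $K_2\supseteq \{0\}\times V_1$ would give $p_1(K_2)\supseteq$ — no, $K_2=\ker p_2$ so $\{0\}\times V_1\subset K_2$ only if those pairs have first coordinate... I expect the honest argument to run: $V_1\times V_1\subset L$ yields $V_1\times\{0\}\subset L$, hence $V_1\subseteq p_1(K_2)$; since $\dim p_1(K_2)=a(L)$ and, symmetrically $V_1\subseteq p_2(K_1)$, and by Lemma~\ref{lem:ker-ort-im} $p_1(L)=p_1(K_2)^\perp\subseteq V_1^\perp=V_0$, and likewise $p_2(L)\subseteq V_0$, so $L\subseteq V_0\times V_0$. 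So the converse reduces to showing $V_1\times V_1$-invariance of $L\cap(V_0\times V_0)$ forces $V_1\times V_1\subseteq L$, which holds because $V_1\times V_1\subseteq V_0\times V_0$ and invariance of a linear subspace under translation by a subspace $U$ is equivalent to $U$ being contained in it. This observation — translation-invariance of a linear subspace $W$ by a linear subspace $U$ iff $U\subseteq W$ — is the trivial lemma that glues everything, and I would state it once at the top.
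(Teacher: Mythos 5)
Your proof is correct and rests on the same mechanism as the paper's: both directions reduce to the equivalence, for a Lagrangian $L$, of the containments $V_1\times V_1\subset L$ and $L\subset V_0\times V_0$, which the paper gets in one line from $L=L^{\perp}$ and $(V_1\times V_1)^{\perp}=V_0\times V_0$ with respect to $B$, and which you unpack via maximal isotropy of $L$ in one direction and via Lemma~\ref{lem:ker-ort-im} in the other. Your opening observation that translation-invariance of a linear subspace under $V_1\times V_1$ just means $V_1\times V_1$ is contained in it is exactly the paper's implicit first step, so, apart from the numerous false starts that should be edited out, the argument matches.
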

\begin{proof}
If $L\cap(V_0\times V_0)$ is $V_1\times V_1$-invariant then
$L\supset V_1\times V_1$, so $L=L^{\perp}\subset(V_1\times V_1)^\perp=V_0\times V_0$. 

Vice versa, if $L\subset V_0\times V_0$ then
$L\supset V_1\times V_1$. Therefore,
$L\cap(V_0\times V_0)=L$ is $V_1\times V_1$-invariant.

\end{proof}

\begin{prp}
\label{prp:red}
Let $R=\cup_{\alpha\in A}L^\alpha$ be a Lagrangian equivalence relation
and let $V_0\subset V$ be a special coisotropic subspace with $V_1=V_0^\perp$. Then the restriction of $R$ to $V_0$ induces an
equivalence relation  on $V_0/V_1$ which is also Lagrangian. Its components are of the form $L^\alpha/(V_1\times V_1)$ for 
$\alpha\in A'$ where
$$
A'=\{\alpha\mid L^\alpha\subset V_0\times V_0\}=
\{\alpha\mid E_{V_0}\circ L^\alpha\circ E_{V_0}=L^\alpha\}.
$$
\end{prp}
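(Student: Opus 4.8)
The plan is to verify the two characterizations of $A'$ separately and then assemble the quotient relation. First I would prove the set-theoretic identity
$$
\{\alpha\mid L^\alpha\subset V_0\times V_0\}=\{\alpha\mid E_{V_0}\circ L^\alpha\circ E_{V_0}=L^\alpha\}.
$$
For the inclusion $\subseteq$: if $L^\alpha\subset V_0\times V_0$, then as in the preceding lemma $L^\alpha\supset V_1\times V_1$, so $L^\alpha$ contains all shifts by $V_1\times V_1$; since $E_{V_0}$ acts on the left by "project the first coordinate into $V_0$ (a no-op here) and add an arbitrary element of $V_1$", and similarly on the right, the composite $E_{V_0}\circ L^\alpha\circ E_{V_0}$ just adds $V_1$-shifts to both coordinates, which $L^\alpha$ already absorbs; one should also check no pairs are lost, which holds because $p_1(L^\alpha)\subset V_0$ and $p_2(L^\alpha)\subset V_0$ guarantee the intermediate compatibility conditions in the composition are automatic. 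For the reverse inclusion, if $E_{V_0}\circ L^\alpha\circ E_{V_0}=L^\alpha$, then the left-hand side visibly has first coordinates in $V_0$ and second coordinates in $V_0$, so $L^\alpha\subset V_0\times V_0$.

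Next I would assemble the relation on $V_0/V_1$. By the observation in \ref{prp:red}'s preamble, $R\cap(V_0\times V_0)$ is exactly the union of those components $L^\alpha$ that are $V_1\times V_1$-invariant, i.e. (by the lemma just before the proposition) those with $L^\alpha\subset V_0\times V_0$, i.e. $\alpha\in A'$. Each such $L^\alpha$ contains $V_1\times V_1$, so its image $\bar L^\alpha:=L^\alpha/(V_1\times V_1)$ is a well-defined subspace of $(V_0/V_1)\times(V_0/V_1)$, and the induced relation on $V_0/V_1$ is $\bigcup_{\alpha\in A'}\bar L^\alpha$. It remains to see this is a Lagrangian equivalence relation: it is an equivalence relation because it is the image of an equivalence relation under the surjection $V_0\times V_0\to(V_0/V_1)\times(V_0/V_1)$ whose fibers (the $V_1\times V_1$-cosets) lie entirely inside $R\cap(V_0\times V_0)$ — so reflexivity, symmetry and transitivity descend. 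For the Lagrangian property I need the nondegenerate form: $V_0/V_1$ carries the induced nondegenerate symmetric form (since $V_1=V_0^\perp$), and each $\bar L^\alpha$ is isotropic for the associated form $\bar B$ because $B$ vanishes on $L^\alpha$ and the form $\bar B$ on $(V_0/V_1)^{\times 2}$ is computed from $B$ on $V_0\times V_0$. Finally $\dim\bar L^\alpha=\dim L^\alpha-\dim(V_1\times V_1)=n-2\dim V_1=\dim(V_0/V_1)$, so each $\bar L^\alpha$ has maximal isotropic dimension, hence is linear Lagrangian in $V_0/V_1$; finiteness of the number of components is inherited.

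The step I expect to be the main obstacle is the careful bookkeeping in the first identity, specifically checking that $E_{V_0}\circ L^\alpha\circ E_{V_0}$ does not \emph{shrink} $L^\alpha$ even when $L^\alpha\subset V_0\times V_0$ — one must confirm that for every $(v,v')\in L^\alpha$ the composite still produces $(v,v')$ itself, which requires exhibiting the intermediate elements $v\in V_0$ and $v'\in V_0$ satisfying the membership conditions built into the definition of $E_{V_0}$; this is exactly where $p_1(L^\alpha)\subset V_0$ and $p_2(L^\alpha)\subset V_0$ are used. The dimension count in the last paragraph is routine given $\dim L^\alpha=n$ (Proposition~\ref{prp:monoid}) and $\dim V_0+\dim V_1=n$, and the descent of the equivalence-relation axioms is a formal consequence of the fiber condition, so those are not where the difficulty lies.
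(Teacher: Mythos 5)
Your proposal is correct and follows essentially the same route as the paper: the identification of $R\cap(V_0\times V_0)$ with $\bigcup_{\alpha\in A'}L^\alpha$ via the preceding lemma, the passage to the quotient by $V_1\times V_1$, and the characterization of $A'$ through $E_{V_0}\circ L^\alpha\circ E_{V_0}$ (which you verify by direct computation using $V_1\times V_1\subset L^\alpha$, where the paper instead notes that this composite is a component of $R$ contained in $V_0\times V_0$ and containing $L^\alpha\cap(V_0\times V_0)$). Your explicit dimension count and isotropy check for the quotient components fill in details the paper leaves implicit, but the argument is the same in substance.
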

\begin{proof}
Only the second equality in the description of $A'$ requires explanation.
For any $\alpha\in A$ the intersection $L^\alpha\cap(V_0\times V_0)$ lies 
in (at least one) $L^{\alpha'}$ satisfying the condition
$L^{\alpha'}\subset V_0\times V_0$.  It is easy to present one of them:
this is 
$L^{\alpha'}:=E_{V_0}\circ L^\alpha\circ E_{V_0}$. So, $\alpha\in A'$ iff 
$L^\alpha=E_{V_0}\circ L^\alpha\circ E_{V_0}$.
\end{proof}

\subsection{Discriminant}
Let $R$ be a Lagrangian equivalence relation on $V$. The discriminant
of $R$, $\Delta_R$, is defined as the union of all proper
special coisotropic subspaces of $V$. Thus, if $R=\cup L^\alpha$,
$$
\Delta_R=\bigcup_{a(L^\alpha)>0}p_1(L^\alpha).
$$

\begin{lem}
\label{lem:birational}
Let $W$ be the Weyl group of a Largangian equivalence relation $R$
with finitely many components.
Then $\C[V]^R\subset\C[V]^W$ and 
the rings $\C[V]^R$ and $\C[V]^W$ have the same field of fractions. 
\end{lem}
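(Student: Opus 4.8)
The plan is to prove the two assertions separately: first the inclusion $\C[V]^R\subset\C[V]^W$, then the equality of fraction fields. The inclusion is immediate from the fact that $W$ (viewed as a group of isometries whose graphs are components of $R$) consists of pairs in $R$: if $f$ is $R$-invariant, then in particular $f(x)=f(s(x))$ for every $s\in W$ and every $x\in V$, so $f\in\C[V]^W$. More conceptually, any subrelation $R'\subset R$ gives $\C[V]^R\subset\C[V]^{R'}$, and the union of the graphs $\Gamma_s$, $s\in W$, is exactly the relation on $V$ determined by the $W$-action.

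For the fraction fields, the nontrivial direction is to show $\mathrm{Frac}(\C[V]^W)\subset\mathrm{Frac}(\C[V]^R)$, i.e. every $W$-invariant polynomial can be written as a ratio of two $R$-invariant polynomials. Here I would use the discriminant $\Delta_R=\bigcup_{a(L^\alpha)>0}p_1(L^\alpha)$, a proper closed subset of $V$ since each atypical $p_1(L^\alpha)$ is a proper subspace and there are finitely many of them. The key geometric point is that $R$ and the $W$-action agree away from $\Delta_R$: if $(x,y)\in R$ with $x\notin\Delta_R$, then $(x,y)\in L^\alpha$ for some $\alpha$, and $x\in p_1(L^\alpha)$ forces $p_1(L^\alpha)=V$, hence $a(L^\alpha)=0$, hence $L^\alpha=\Gamma_s$ for some $s\in W$ and $y=s(x)$. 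Symmetrically one checks $y\notin\Delta_R$ as well (using that $p_2(L^\alpha)=V$ too, so the discriminant is also the union of the $p_2(L^\alpha)$ for atypical $\alpha$, or simply that $W$ preserves $\Delta_R$).

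Now pick $h\in\C[V]$ a nonzero polynomial vanishing on $\Delta_R$, and replace it by $\prod_{s\in W}h\circ s$ so that we may assume $h$ is $W$-invariant and vanishes on $\Delta_R$ (here $\Delta_R$ is $W$-stable by identity 2 of the Weyl-group subsection, $E_{s(V_0)}=\Gamma_s\circ E_{V_0}\circ\Gamma_{s^{-1}}$, which permutes special coisotropic subspaces). Given $f\in\C[V]^W$, consider $g:=h^N f$ for $N$ large. I claim $g\in\C[V]^R$ for suitable $N$: for $(x,y)\in R$ with $x,y\notin\Delta_R$ we have $y=s(x)$ and $g(x)=h(x)^Nf(x)=h(s(x))^Nf(s(x))=g(y)$ by $W$-invariance of both $h$ and $f$; for $(x,y)\in R$ with $x\in\Delta_R$ (equivalently $y\in\Delta_R$), both $g(x)$ and $g(y)$ vanish provided $h$ vanishes on $\Delta_R$ (take $N\geq 1$). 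Hence $f=g/h^N$ with $g,h^N\in\C[V]^R$, giving $f\in\mathrm{Frac}(\C[V]^R)$.

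The main obstacle is the bookkeeping in the middle step: one must be sure that $(x,y)\in R$ with $x\notin\Delta_R$ really does force $y=s(x)$ for some $s\in W$, and in particular that no atypical component can contain a point of $V\setminus\Delta_R$ in its first projection — this is where the hypothesis of finitely many components and the definition of $\Delta_R$ as the union of the \emph{proper} special coisotropic subspaces $p_1(L^\alpha)$ are used. One should also record that $\Delta_R\neq V$, which again uses finiteness of $A$ together with each atypical $p_1(L^\alpha)$ being a proper subspace (a finite union of proper subspaces of a vector space over an infinite field is proper). With these points in place the argument is complete; no localization or going-up is needed, only the separation of the "generic" locus from the discriminant.
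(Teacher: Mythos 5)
Your proposal is correct and follows essentially the same route as the paper: both arguments rest on choosing a $W$-invariant polynomial vanishing on the discriminant $\Delta_R$ and observing that multiplication by it carries $\C[V]^W$ into $\C[V]^R$, whence the two rings agree after localization. You simply spell out the verification (that $(x,y)\in R$ off the discriminant forces $y\in Wx$, and that $\Delta_R$ is a proper $W$-stable closed subset) that the paper leaves implicit in the phrase ``This implies that $T\in\C[V]^R$ and moreover $T\C[V]^W\subset\C[V]^R$.''
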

\begin{proof}
The discriminant $\Delta_R$ is Zariski closed in $V$, so there is a
$W$-invariant polynomial $T$ vanising on $\Delta_R$: such is,
for instance, $T=\prod_{s\in W}s(f)$ where $f$ vanishes on 
$\Delta_R$. This implies that $T\in\C[V]^R$ and moreover $T\C[V]^W\subset \C[V]^R$.
Therefore, the localizations of  $\C[V]^R$ and of $\C[V]^W$ with respect
to $T$, coincide.
This proves the lemma.
\end{proof}

\subsection{Regular Lagrangian equivalence relations}

\begin{dfn}A Lagrangian equivalence relation $R$ on $V$ with finitely many components is called
$1$-{\sl regular} if either $\Delta_R=\emptyset$ (all components $L^\alpha$ have atypicality $0$, that is, $R=W$), or $\Delta_R=WV_0$ where $V_0$ is a special coisotropic subspace of codimension $1$.
\end{dfn}

If a Lagrangian equivalence relation $R$ is $1$-regular with a
codimension $1$ special coisotropic subspace $V_0\subset V$, one can
easily describe the Weyl group of the reduction of $R$ to $V_0/V_1$. 
\begin{prp}
Let $R$ be $1$-regular. Then the Weyl group $W'$ of the
equivalence relation $R':=R|_{V_0/V_1}$ is 
the quotient of the stabilizer $\{s\in W|s(V_0)=V_0\}$
modulo the normal subgroup of $s\in W$ inducing the identity on $V_0/V_1$.
\end{prp}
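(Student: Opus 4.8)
The plan is to construct a group homomorphism $\phi\colon\mathrm{Stab}_W(V_0)\to W'$ out of the stabilizer $\mathrm{Stab}_W(V_0)=\{s\in W\mid s(V_0)=V_0\}$, to identify its kernel with $N=\{s\in W\mid s|_{V_0/V_1}=\id\}$, and to show that $\phi$ is surjective; the proposition then follows, $N$ being normal in $\mathrm{Stab}_W(V_0)$ as the kernel of a homomorphism. (If $\Delta_R=\emptyset$ everything degenerates harmlessly: $V_0=V$, $V_1=0$, $R'=R=W$, $\mathrm{Stab}_W(V)=W$, $N=\{1\}$; so I assume $V_0$ is a proper special coisotropic hyperplane.)

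\emph{Construction of $\phi$ and its kernel.} An isometry $s\in W$ fixing $V_0$ also fixes $V_1=V_0^\perp$, hence induces an isometry $\bar s$ of $V_0/V_1$; set $\phi(s)=\bar s$. This is visibly a homomorphism with $\ker\phi=N$, so the only thing to check is $\bar s\in W'$. Here I would use that $E_{V_0}$ is a component of $R$ (since $V_0$ is special coisotropic) and that $\Gamma_s$ is a component of $R$; the composite $E_{V_0}\circ\Gamma_s\circ E_{V_0}$ is then a linear Lagrangian relation (Proposition~\ref{prp:monoid}) contained in the finite union $R=\bigcup_\alpha L^\alpha$, so it coincides with one of the $L^\alpha$. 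A brief computation with $s(V_0)=V_0$ and $s(V_1)=V_1$ identifies this component with $\{(x,z)\in V_0\times V_0\mid z-s(x)\in V_1\}$: it lies in $V_0\times V_0$, and its image in $(V_0/V_1)\times(V_0/V_1)$ is the graph of $\bar s$. By Proposition~\ref{prp:red} that graph is a component of $R'$, of atypicality $0$, so $\bar s\in W'$. This yields the injection $\mathrm{Stab}_W(V_0)/N\hookrightarrow W'$.

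\emph{Surjectivity --- the main obstacle.} Let $\bar\alpha\in W'$. First I would unpack the definition: by Proposition~\ref{prp:red}, $\Gamma_{\bar\alpha}=L^\alpha/(V_1\times V_1)$ for a component $L^\alpha\subseteq V_0\times V_0$ of $R$; the criterion in the lemma just before Proposition~\ref{prp:red} forces $V_1\times V_1\subseteq L^\alpha$, and vanishing of the atypicality of $L^\alpha/(V_1\times V_1)$ forces $p_1(L^\alpha)=p_2(L^\alpha)=V_0$. By Lemmas~\ref{lem:L-dec} and~\ref{lem:L-isom}, $L^\alpha$ is the linear Lagrangian relation attached to an isometry $\gamma\in O(V)$ with $\gamma(V_0)=V_0$ and $\bar\gamma=\bar\alpha$. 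The crux is to replace $\gamma$ by an element of $W$, i.e. to produce $s\in\mathrm{Stab}_W(V_0)$ with $L^\alpha=E_{V_0}\circ\Gamma_s\circ E_{V_0}$. My intended route uses $1$-regularity through the $W$-action on the finite set $\cC=\{L\text{ a component of }R\mid p_1(L)=V_0\}$ by $L\mapsto\Gamma_s\circ L$ (this preserves $\cC$, since $p_1(\Gamma_s\circ L)=p_1(L)$): the map $\cC\to\{\text{special coisotropic hyperplanes}\}$, $L\mapsto p_2(L)$, is $W$-equivariant with image the single orbit $WV_0$ by $1$-regularity, so all its fibres are equinumerous; the fibre over $V_0$ is the set of components with $p_1=p_2=V_0$, which is a group under composition (unit $E_{V_0}$, inverses supplied by Lemma~\ref{lem:ll_1}) isomorphic to $W'$. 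Surjectivity of $\phi$ is then exactly the statement that $s\mapsto E_{V_0}\circ\Gamma_s\circ E_{V_0}$ carries $\mathrm{Stab}_W(V_0)$ onto that fibre; when $R$ is moreover generated by $W$ and the idempotents $E_{t(V_0)}$, $t\in W$ (as in the Lie-superalgebra applications), one can reach this by an induction on the length of an expression of $L^\alpha$ as an alternating composition of graphs $\Gamma_t$ and copies of $E_{V_0}$, using the identity $E_{V_0}\circ\Gamma_t\circ E_{V_0}=E_{V_0}$ for $t\notin\mathrm{Stab}_W(V_0)$ (and $=E_{V_0}\circ\Gamma_t\circ E_{V_0}$ already of the desired form otherwise).

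I expect essentially the whole difficulty of the proposition to sit in this last point. The obstruction is that a component with $p_1=p_2=V_0$ has atypicality $1$ and is therefore never invertible in the monoid $R$ (its invertibles being exactly $W$), so the desired lift $s$ cannot be manufactured by any composition manipulation starting from $L^\alpha$; one must genuinely feed in the global constraint that the poset of proper special coisotropic subspaces consists of the single $W$-orbit $WV_0$ in order to force the isometry $\bar\alpha$ to already come from $W$.
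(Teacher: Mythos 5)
Your construction of $\phi\colon\Stab_W(V_0)\to W'$, the identification of its kernel with $N$, and the reduction of the whole statement to surjectivity of $\phi$ all match what the paper does (the paper performs the same identification of $W'$ with the set of components satisfying $p_1(L^\alpha)=p_2(L^\alpha)=V_0$). The gap is in surjectivity, exactly where you place it, and neither of your two repairs closes it. Equinumerosity of the fibres of $L\mapsto p_2(L)$ over the orbit $WV_0$ says nothing about whether the subgroup $\phi(\Stab_W(V_0))$ exhausts the fibre over $V_0$. More seriously, the identity $E_{V_0}\circ\Gamma_t\circ E_{V_0}=E_{V_0}$ for $t\notin\Stab_W(V_0)$ is false: if $t(V_1)\not\subset V_0$ this composite again has $p_1=p_2=V_0$ and induces on $V_0/V_1$ the (generally nontrivial) isometry coming from $t\colon V_0\cap t^{-1}(V_0)\to t(V_0)\cap V_0$, while if $t(V_1)\subset V_0$ it has $p_1$ of codimension $2$; in neither case does it collapse to $E_{V_0}$. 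Compare the proof of Proposition~\ref{prp:RtoR'}, where such a word is \emph{not} killed for arbitrary $t$: one uses the special involution of Lemma~\ref{lem:two-step}(3), which acts trivially on $S^\perp/\Span(S)$, to replace $t$ by an element of the stabilizer. That is a root-system input, not a formal consequence of $1$-regularity.

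You should also know that the paper's own proof settles surjectivity in one sentence (``by $1$-regularity, $L^\alpha=E_{V_0}\circ\Gamma_s$ for some $s\in W$''), and your closing suspicion that $1$-regularity alone cannot force this appears to be well founded. For instance, on $V=\C^4$ with an orthonormal basis put $V_1=\C(e_1+ie_2)$ and $V_0=V_1^\perp$, and let $\sigma$ be the isometry negating $e_3$ and fixing $e_1,e_2,e_4$; then $R=\Delta_V\cup E_{V_0}\cup\bigl(E_{V_0}\circ\Gamma_\sigma\bigr)$ (with $\Delta_V$ the diagonal) is a Lagrangian equivalence relation with $W=\{1\}$ and discriminant $V_0=WV_0$, hence $1$-regular, yet $W'\cong\Z/2$ while $\Stab_W(V_0)/N$ is trivial. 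So surjectivity genuinely requires an extra hypothesis ensuring that every component with $p_1=p_2=V_0$ has the form $E_{V_0}\circ\Gamma_s$ with $s\in W$ --- in the paper's application this is supplied by Corollary~\ref{crl:L-description} together with Lemma~\ref{lem:two-step} --- and no amount of manipulation inside the monoid of components will manufacture it.
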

\begin{proof}
We are looking for $L^\alpha$, $\alpha\in A'$, satisfying the condition $p_1(L^\alpha)+V_1=V_0$. Since $p_1(L^\alpha)\subset V_0$ is also coisotropic, 
$p_1(L^\alpha)\supset p_1(L^\alpha)^\perp\supset V_1$ so in fact
$p_1(L^\alpha)=V_0$. By 1-regularity, $L^\alpha=E_{V_0}\circ\Gamma_s$ for some $s\in W$ and the condition $p_2(L^\alpha)=V$
implies that $s$ stabilizes $V_0$.
\end{proof}
\begin{dfn}
A Lagrangian equivalence relation $R$ is called {\sl regular} if for any special
coisotropic subspace $V_0$ with $V_1=V_0^\perp$ the equivalence
relation $R'=R|_{V_0/V_1}$ is $1$-regular.
\end{dfn}

Obviously, if $R$ is regular, the reduction $R'=R|_{V_0/V_1}$ is
also regular for any special coisotropic subspace $V_0$ of $V$.

\subsection{Product of equivalence relations}
Let $V,V'$ be two vector spaces with equivalence relations $R\subset V\times V$ and $R'\subset V'\times V'$. The equivalence relation
$R\times R'$ on $V\times V'$ is defined in an obvious way:
$((v,v'),(w,w'))\in R\times R'$ iff $(v,w)\in R$ and $(v',w')\in R'$.
The isomorphism $\C[V\times V']=\C[V]\otimes\C[V']$ defines a homomorphism 
$$
\mu:\C[V]^R\otimes\C[V']^{R'}\to\C[V\times V']^{R\times R'}.
$$
\begin{lem}
The map $\mu$ is bijective.
\end{lem}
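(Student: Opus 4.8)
The plan is to prove bijectivity of $\mu$ by an averaging-type argument, reducing first to the case where both $R$ and $R'$ are finite group actions via Lemma~\ref{lem:birational} is not quite enough, so instead I would proceed directly. First I would observe that $\mu$ is injective: this is immediate since $\C[V]^R\otimes\C[V']^{R'}\hookrightarrow\C[V]\otimes\C[V']=\C[V\times V']$ is already injective (tensor product of inclusions of vector spaces), and $\mu$ is the corestriction of this injection. So the content is surjectivity.

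For surjectivity, take $f\in\C[V\times V']^{R\times R'}$ and write it, using a basis of $\C[V']$, as a finite sum $f=\sum_i g_i\otimes h_i$ with $g_i\in\C[V]$ and the $h_i\in\C[V']$ linearly independent. The plan is to show each $g_i$ lies in $\C[V]^R$ and, symmetrically, that $f$ can be rewritten with the $\C[V]$-factors linearly independent and the $\C[V']$-factors in $\C[V']^{R'}$; combining gives $f\in\C[V]^R\otimes\C[V']^{R'}$. Fix $(x,y)\in R$. For every $v'\in V'$ we have $f(x,v')=f(y,v')$ because $((x,v'),(y,v'))\in R\times R'$ (the diagonal pair is in $R'$). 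Thus $\sum_i (g_i(x)-g_i(y))h_i(v')=0$ as a function of $v'$, i.e. $\sum_i (g_i(x)-g_i(y))h_i=0$ in $\C[V']$. Since the $h_i$ are linearly independent over $\C$, we get $g_i(x)=g_i(y)$ for all $i$. As $(x,y)\in R$ was arbitrary, $g_i\in\C[V]^R$ for all $i$. Hence $f\in\C[V]^R\otimes\C[V']$.

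Now repeat the argument on the second factor: write $f=\sum_j g_j'\otimes h_j'$ with $g_j'\in\C[V]^R$ linearly independent over $\C$ (possible after a change of basis, since we already know $f\in\C[V]^R\otimes\C[V']$) and $h_j'\in\C[V']$. For $(x',y')\in R'$ and any $v\in V$, $f(v,x')=f(v,y')$, so $\sum_j g_j'(v)(h_j'(x')-h_j'(y'))=0$ for all $v$, whence $\sum_j g_j'(h_j'(x')-h_j'(y'))=0$ in $\C[V]$; linear independence of the $g_j'$ forces $h_j'(x')=h_j'(y')$, so $h_j'\in\C[V']^{R'}$. Therefore $f=\sum_j g_j'\otimes h_j'\in\C[V]^R\otimes\C[V']^{R'}$, proving surjectivity.

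The only subtlety — and the step to state carefully — is the change of basis in the middle: having established $f\in\C[V]^R\otimes\C[V']$, one must be able to express $f$ as $\sum_j g_j'\otimes h_j'$ with the $g_j'$ linearly independent in $\C[V]^R$ (over $\C$), which is just linear algebra in the $\C$-vector space $\C[V]^R\otimes\C[V']$: any element of a tensor product $U\otimes W$ can be written as $\sum u_j\otimes w_j$ with the $u_j$ linearly independent. This needs nothing about the ring structure. Note that no finiteness hypothesis on the number of components of $R$ or $R'$ is used anywhere; the statement holds for arbitrary equivalence relations, which is why the lemma is stated without that restriction.
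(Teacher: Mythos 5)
Your proof is correct, and it follows the same overall strategy as the paper: injectivity is immediate, and surjectivity is obtained by writing $f=\sum g_i\otimes h_i$ with linearly independent factors on one side and deducing invariance of the factors on the other side. The one genuine difference is the mechanism for exploiting linear independence. The paper fixes a presentation in which \emph{both} families $\{p_i\}$ and $\{q_i\}$ are linearly independent, notes that $f(-,y)\in\C[V]^R$ for every $y\in V'$, and invokes Lemma~\ref{lem:lin} to find points $y_j$ with $(q_i(y_j))$ invertible, so that each $p_i$ is a linear combination of the invariant functions $f(-,y_j)$. You instead fix a pair $(x,y)\in R$, observe that $\sum_i(g_i(x)-g_i(y))h_i$ vanishes as a function on $V'$, and conclude it is the zero polynomial, so linear independence of the $h_i$ forces $g_i(x)=g_i(y)$; then you run a second pass on the other factor. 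This bypasses Lemma~\ref{lem:lin} at the cost of invoking the (equally standard) fact that a polynomial vanishing identically on a complex vector space is zero --- which is really the same phenomenon, since over an infinite field linear independence of polynomials coincides with linear independence of the associated functions, and that is precisely what Lemma~\ref{lem:lin} encodes. Your two-pass organization is slightly less economical than the paper's single ``shortest presentation,'' but the intermediate change of basis is handled correctly, and your closing remark that no finiteness of the set of components is needed is accurate: the lemma holds for arbitrary equivalence relations.
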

\begin{proof}
The map $\mu$ is obviously injective. Let us verify its surjectivity.
Given an invariant polynomial $f\in\C[V\times V']^{R\times R'}$, let
$$
f=\sum p_i\otimes q_i,
$$
with $p_i\in\C[V]$ and $q_i\in\C[V']$, be a shortest presentation of $f$.
This means, in particular, that $p_i$ are linearly independent in $\C[V]$
and $q_i$ are linearly independent in $\C[V']$.
Note that for any $y\in V'$ the function $f(-,y)=\sum p_iq_i(y)$ belongs to $\C[V]^R$. By Lemma~\ref{lem:lin} below we deduce that there are 
$y_i\in V'$ such that the matrix $\left(q_i(y_j)\right)$ is invertible. This 
implies that all the $p_i$ belong to $\C[V]^R$. In a similar way
we deduce that all the $q_i$ belong to $\C[V']^{R'}$, so that $f\in
\C[V]^R\otimes\C[V']^{R'}$.
\end{proof}

The following lemma must be well-known.
\begin{lem}
\label{lem:lin}
Let $f_1,\ldots,f_n:X\to\C$ be $n$ linear independent functions
on an infinite set $X$. Then there exist $x_1,\ldots,x_n\in X$ such that
the matrix $(f_i(x_j))$ is invertible.
\end{lem}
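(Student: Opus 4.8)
The plan is to argue by induction on $n$. The case $n=1$ is immediate: a nonzero function $f_1$ takes a nonzero value at some $x_1\in X$, and then the $1\times 1$ matrix $(f_1(x_1))$ is invertible. For the inductive step, suppose the statement holds for any collection of $n-1$ linearly independent functions. Given $f_1,\ldots,f_n$ linearly independent, the functions $f_1,\ldots,f_{n-1}$ are also linearly independent, so by the inductive hypothesis there are points $x_1,\ldots,x_{n-1}\in X$ for which the $(n-1)\times(n-1)$ matrix $M=(f_i(x_j))_{1\le i,j\le n-1}$ is invertible.

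Now consider, for a variable point $x\in X$, the $n\times n$ matrix
$$
M(x)=\begin{pmatrix} f_1(x_1) & \cdots & f_1(x_{n-1}) & f_1(x)\\ \vdots & & \vdots & \vdots\\ f_n(x_1) & \cdots & f_n(x_{n-1}) & f_n(x)\end{pmatrix},
$$
and expand $\det M(x)$ along its last column: $\det M(x)=\sum_{i=1}^n c_i f_i(x)$, where the $c_i$ are the signed minors obtained by deleting row $i$ and the last column of $M(x)$; these are constants independent of $x$, and in particular $c_n=\pm\det M\ne 0$. Thus $x\mapsto\det M(x)$ is a nonzero linear combination of $f_1,\ldots,f_n$: if it were identically zero on $X$, that would contradict the linear independence of $f_1,\ldots,f_n$. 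Hence there exists $x_n\in X$ with $\det M(x_n)\ne 0$, and then $x_1,\ldots,x_n$ is the desired collection.

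The argument uses nothing beyond multilinearity of the determinant and cofactor expansion, so there is no real obstacle; the only point worth stating carefully is that the coefficient $c_n$ is nonzero, which is exactly why the linear combination $\sum c_i f_i$ is a genuine (nontrivial) relation and can therefore be made nonzero somewhere on $X$. The hypothesis that $X$ is infinite is not actually needed in this formulation — it suffices that $X$ be nonempty at each stage, which is automatic — but it does no harm to keep it.
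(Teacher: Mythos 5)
Your proof is correct and is essentially the paper's argument: both hinge on expanding the determinant of the matrix augmented by a column $f_i(x)$ along that column, observing that the cofactor of $f_n(x)$ is a nonvanishing minor, and concluding from linear independence that the resulting nontrivial combination $\sum c_i f_i$ cannot vanish identically. You package it as an induction on $n$ while the paper argues by contradiction from a maximal $k$, but this is only a cosmetic difference; your side remark that infiniteness of $X$ is not needed is also accurate.
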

\begin{proof}
Let $k$ be the maximal number for  which there exist $x_1,\ldots,x_k$
that yield a rank $k$ matrix $(f_i(x_j))$. We have to verify that $k=n$.
Assume $k<n$. There exists a nonvanishing $k$-minor of the matrix $(f_i(x_j))$; without loss of generality we can assume that this is the
minor formed by the functions $f_1,\ldots,f_k$. We will now prove that
the functions $f_1,\ldots,f_{k+1}$ are linearly dependent. Look at the
$(k+1)\times(k+1)$-matrix formed by the values of the functions
$f_1,\ldots,f_{k+1}$ at the points  $x_1,\ldots,x_k,x$ of $X$. We get a square matrix that is not
invertible for all values of $x\in X$. This means that its determinant
vanishes. Decomposition along the last column yields a linear dependence 
of $f_i(x)$. 
\end{proof}

Product of equivalence relations preserves detectability.

\begin{lem}
\label{lem:product-det}
Let $R$ and $R'$ be detectable equivalence relations on $V$ and $V'$, respectively. Then $R\times R'$ is a detectable equivalence relation on $V\times V'$.
\end{lem}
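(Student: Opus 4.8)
The plan is to reduce the two defining properties of detectability for $R\times R'$ to the corresponding properties for $R$ and $R'$, using the isomorphism $\mu\colon\C[V]^R\otimes\C[V']^{R'}\xrightarrow{\ \sim\ }\C[V\times V']^{R\times R'}$ established above. Write $A=\C[V]^R$, $A'=\C[V']^{R'}$, so that $\C[V\times V']^{R\times R'}=A\otimes A'$. A maximal ideal $\fm$ of $A\otimes A'$ is determined by a $\C$-algebra homomorphism $\chi\colon A\otimes A'\to\C$ (since $\C$ is algebraically closed and we will see the residue field is $\C$); such a $\chi$ is the same as a pair $(\chi_1,\chi_2)$ of homomorphisms $A\to\C$, $A'\to\C$, i.e.\ a pair $(\fm_1,\fm_2)$ of maximal ideals of $A$ and of $A'$ with residue field $\C$. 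So the first step is: every maximal ideal of $A\otimes A'$ has this product form, with residue field $\C$, and $\fm=\fm_1\otimes A'+A\otimes\fm_2$.

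For property~1 (every maximal ideal comes from a point): given $\fm\subset A\otimes A'$, pull back to the pair $(\fm_1,\fm_2)$. By detectability of $R$ there is $\lambda\in V$ with $\fm_1=A\cap\fm_\lambda$, i.e.\ $\chi_1=\mathrm{ev}_\lambda|_A$; similarly $\mu\in V'$ with $\fm_2=A'\cap\fm_\mu$. Then $\chi=\chi_1\otimes\chi_2=\mathrm{ev}_{(\lambda,\mu)}$ on $A\otimes A'$, so $\fm=(A\otimes A')\cap\fm_{(\lambda,\mu)}$, as required. For property~2 (the fiber over $\fm$ is a single $(R\times R')$-class): if $(\lambda,\mu)$ and $(\lambda',\mu')$ both lie in $V\left((A\otimes A')\cap\fm\right)$, then $\mathrm{ev}_\lambda$ and $\mathrm{ev}_{\lambda'}$ agree on $A$, hence $(\lambda,\lambda')\in R$ by detectability of $R$; likewise $(\mu,\mu')\in R'$; so $((\lambda,\mu),(\lambda',\mu'))\in R\times R'$. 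Conversely any element of that class clearly gives the same $\chi$. Thus the common zero set of $\fm$ is nonempty and is exactly one $(R\times R')$-equivalence class, which is detectability.

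The step I expect to be the main obstacle — and the only nontrivial one — is the claim that maximal ideals of $A\otimes A'$ have residue field $\C$ and split as a product $(\fm_1,\fm_2)$ of maximal ideals with residue field $\C$. For $\C$-algebras that are \emph{not} finitely generated (recall the baby example shows $A$ can be non-Noetherian), a tensor product of two fields over $\C$ need not be a field, and one must know a priori that $A/\fm_1=\C$. Here I would exploit that $A\subset\C[V]$ and $A'\subset\C[V']$: by Lemma~\ref{lem:birational} (or rather: because $A$ contains a nonzero $W$-invariant $T$ with $T\C[V]^W\subset A$, so $\mathrm{Frac}(A)=\mathrm{Frac}(\C[V]^W)$, and $\C[V]$ is finite over $\C[V]^W$), every $\C$-algebra map $A\to\C$ extends to $\C[V]\to\C$, giving a point $\lambda\in V$; in particular $A/\fm_1=\C$. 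Hence for any $\fm\subset A\otimes A'$, the composites $A\to A\otimes A'\to (A\otimes A')/\fm$ and $A'\to(A\otimes A')/\fm$ land in a field that is generated by two copies of $\C$, forcing $(A\otimes A')/\fm=\C$ and $\fm_1=\ker(A\to\C)$, $\fm_2=\ker(A'\to\C)$ to be maximal with $\fm=\fm_1\otimes A'+A\otimes\fm_2$. Once this point-theoretic description is in hand, properties~1 and~2 for $R\times R'$ follow immediately from those for $R$ and $R'$ as sketched, completing the proof.
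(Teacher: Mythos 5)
You correctly isolate the crux: one must show that every maximal ideal $\fm$ of $A\otimes A'$ has residue field $\C$ and therefore splits as a pair of $\C$-points of $A$ and $A'$. But your justification of exactly this step has a genuine gap. The contraction $\fm\cap A$ is a priori only a \emph{prime} ideal, so the image of $A$ in $(A\otimes A')/\fm$ is the domain $A/(\fm\cap A)$, not $\C$; the observation that every $\C$-algebra homomorphism $A\to\C$ extends to $\C[V]$ does not help, because you do not yet know that the composite $A\to(A\otimes A')/\fm$ factors through $\C$. Worse, the general principle you are implicitly invoking --- that if all maximal ideals of $A$ and of $A'$ have residue field $\C$, then so do all maximal ideals of $A\otimes A'$ --- is false for non--finitely generated algebras: take $A=A'=\C[x]_{(x)}$, a local ring with residue field $\C$; then the ideal of $A\otimes A'$ generated by $xy-1$ is maximal with residue field $\C(x)$. (A secondary issue: your appeal to Lemma~\ref{lem:birational} and to the polynomial $T$ presupposes a Weyl group and a Lagrangian structure, which are not among the hypotheses of this lemma --- it is stated for arbitrary detectable equivalence relations.)

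The paper closes the gap by a cardinality count that uses only the inclusion $A\otimes A'\subset\C[V]\otimes\C[V']=\C[V\times V']$: the residue field $(A\otimes A')/\fm$ has dimension at most $\aleph_0$ over $\C$, whereas any field strictly containing $\C$ contains a transcendental element, hence a copy of $\C(x)$, whose $\C$-dimension is uncountable (the elements $(x-a)^{-1}$, $a\in\C$, are linearly independent). Therefore $(A\otimes A')/\fm=\C$, the maximal ideals of $A\otimes A'$ are exactly the kernels of elements of $\Hom(A\otimes A',\C)=\Hom(A,\C)\times\Hom(A',\C)$, and from that point on your deduction of properties 1 and 2 for $R\times R'$ from those for $R$ and $R'$ is correct and is what the paper leaves as ``obvious''. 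So the proposal needs this one replacement in its central step; the surrounding reductions are fine.
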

\begin{proof}Denote $A=\C[V]^R$, $A'=\C[V']^{R'}$. Let $\fm\subset A\otimes A'$ be a maximal ideal. 

We have $\dim_{\C}(A\otimes A')/\fm\leq\dim_{\C}\C[V]\otimes\C[V']
=\aleph_0$, whereas $\dim_{\C}\C(x)>\aleph_0$, 
so $(A\otimes A')/\fm=\C$. This implies that the set of maximal ideals of $A\otimes A'$
coincides with $\Hom(A\otimes A',\C)=\Hom(A,\C)\times\Hom(A',\C)$. 
The rest of the claim is obvious.
\end{proof}

\subsection{Semiregularity}

\begin{dfn}A Lagrangian equivalence relation $R$ on a vector space $V$
is called $1$-semiregular if $V$ admits an orthogonal decomposition
$V=V^1\times\ldots\times V^k$, $R=R^1\times\ldots\times R^k$ such that
all $R_i$ are $1$-regular Lagrangian equivalence relations on $V_i$.
\end{dfn}

\begin{dfn}
A Lagrangian equivalence relation $R$ on a vector space $V$ is called
semiregular if for any special coisotropic subspace $V_0$ of $V$ with $V_1=V_0^\perp$ the equivalence relation $R'=R|_{V_0/V_1}$ is
$1$-semiregular.
\end{dfn}

The main result of our paper is

\begin{THM}
\label{thm:semireg-det}
Any semiregular  Lagrangian equivalence relation $R$ is
detectable: there is a one-to-one correspondence between the
maximal ideals of $\C[V]^R$ and the set $V/R$ of equivalence classes
of $V$ modulo $R$.
\end{THM}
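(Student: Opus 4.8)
The plan is to prove detectability by induction on $\dim V$, using the stratification of $V/R$ by special coisotropic subspaces and the two criteria of Lemma~\ref{lem:detsat}: that $R$ is saturated, and that the extension $\C[V]\fm$ is proper for every maximal ideal $\fm\subset\C[V]^R$. The base case $\dim V=0$ is trivial, and the case $\Delta_R=\emptyset$ (so $R=W$) is covered by the classical finite group situation of~\ref{fingr}. For the inductive step, let $A=\C[V]^R$. By Lemma~\ref{lem:birational}, $A$ and $\C[V]^W$ share the field of fractions, and there is a $W$-invariant $T\in A$ vanishing on the discriminant $\Delta_R$ with $T\C[V]^W\subset A$; hence the localization $A_T=(\C[V]^W)_T$ is Noetherian and finite over itself in the evident sense, so every maximal ideal of $A$ \emph{not} containing $T$ corresponds to a $W$-orbit in $V\setminus\Delta_R$, i.e.\ to an $R$-equivalence class in the open stratum, and for such $\fm$ the extension $\C[V]\fm$ is proper. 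This disposes of the open stratum; what remains is to handle the maximal ideals containing $T$, equivalently containing the ideal $I(\Delta_R)\cap A$.

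The key reduction is the following. Since $R$ is semiregular, $\Delta_R=\bigcup WV_0$ over the (finitely many) codimension-$\geq 1$ special coisotropic $V_0$; it suffices to work one $W$-orbit of special coisotropic subspaces at a time and, by the $W$-action, to fix a single $V_0$ with $V_1=V_0^\perp$. Restriction of functions gives a map $A=\C[V]^R\to\C[V_0/V_1]^{R'}=:A'$ where $R'=R|_{V_0/V_1}$, which by Proposition~\ref{prp:red} is again a Lagrangian equivalence relation; and semiregularity of $R$ says precisely that $R'$ is $1$-semiregular, hence a product $R'=R^1\times\cdots\times R^k$ of $1$-regular relations, and moreover (by the paragraph after the definition of regular/semiregular, applied iteratively) $R'$ is itself semiregular, so the inductive hypothesis applies to $R'$ once we know detectability is preserved by finite products, which is Lemma~\ref{lem:product-det}. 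Thus $R'$ is detectable, and maximal ideals of $A'$ biject with $V_0/V_1$ modulo $R'$, which is exactly the set of $R$-classes lying in the stratum $WV_0\subset\Delta_R$. The crucial missing ingredient is \textbf{surjectivity of the restriction map $A\to A'$}: granting this, a maximal ideal $\fm\subset A$ containing $I(WV_0)\cap A$ but no smaller stratum ideal pulls back to a maximal ideal of $A'$, hence to an $R'$-class, hence to an $R$-class, and its extension to $\C[V]$ is proper because it is proper already in $\C[V_0]$ by the inductive hypothesis for $R'$. Running this over all strata, ordered by inclusion, and combining with the open-stratum analysis gives both conditions of Lemma~\ref{lem:detsat}, hence detectability.

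I expect the surjectivity of $\C[V]^R\to\C[V_0/V_1]^{R'}$ to be the main obstacle — indeed the introduction flags it as the delicate point, to be deduced ``from elementary sheaf theory'' via Kempf's spaces-with-functions. The approach there will be to realize $V/R$ as a ringed space glued from the open stratum $(V\setminus\Delta_R)/W$ and the closed pieces $(V_0/V_1)/R'$, show that the structure sheaf restriction to a closed stratum is surjective on global sections (a statement about a short exact sequence of sheaves and vanishing of the relevant $H^1$, or more elementarily about extending an invariant function defined near $V_0$ to all of $V$ by the averaging/gluing argument), and identify global sections on $V/R$ with $\C[V]^R$. A secondary technical point is checking that a maximal ideal $\fm\subset A$ containing some stratum ideal actually contains a \emph{minimal} such, so that the stratification argument terminates — this follows since there are finitely many special coisotropic subspaces, so the poset of stratum-ideals in $A$ is finite and $\fm$ contains a minimal element of the set of those it contains; on that minimal stratum the previous analysis (open-stratum argument applied to $R'$ in place of $R$) pins $\fm$ down to a single $R$-class. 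Everything else — that $R$ is saturated, that the various rings have the claimed fraction fields, that products behave well — is already in hand from the lemmas above.
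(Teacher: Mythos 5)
Your proposal is correct and follows essentially the same route as the paper: reduce to the $1$-regular case via the product lemma, handle typical maximal ideals by localizing at the $W$-invariant polynomial $T$ cutting out the discriminant (so that $\C[V]^R_T=\C[V]^W_T$ and the finite-group case applies), and handle atypical ones by passing to $V_0/V_1$ and inducting, with surjectivity of $\C[V]^R\to\C[V_0/V_1]^{R'}$ (the paper's Proposition~\ref{prp:surj}) as the key input. You correctly single out that surjectivity as the delicate point, and your sketch of its proof --- a short exact sequence of sheaves on the space with functions $V/R$ together with vanishing of $H^1$ of the pushforward from the affine quotient $V/W$ --- is exactly how the paper establishes it.
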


\subsection{Proof of Theorem~\ref{thm:semireg-det}} We will now present the proof of Theorem~\ref{thm:semireg-det}, modulo a surjectivity
result~\ref{prp:surj}.
Using Lemma~\ref{lem:product-det}, we can assume that
$R$ is $1$-regular (and semiregular).
Let $V_0$ be a special coisotropic subspace of $V$ of codimension
$1$. The discriminant $\Delta_R$ has form $\cup_{s\in W}s(V_0)$.

We denote by $T\in\C[V]$ the $W$-invariant polynomial of a minimal degree
that vanishes at $V_0$. Obviously $T\in\C[V]^R$.
Set $R':=R|_{V_0/V_1}$.
Our proof of Theorem~\ref{thm:semireg-det}
is based on the following result.
\begin{prp}
\label{prp:surj}
The restriction map
$$
i^*:\C[V]^R\to\C[V']^{R'}
$$
is surjective, with the kernel $\rad(T)$.
\end{prp}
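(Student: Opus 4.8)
The plan is to set up the restriction map as a geometric restriction of sections on the ``space with functions'' $V/R$, and use a short exact sequence of sheaves together with the vanishing of higher cohomology on the affine base $V/W$. First I would recall that, by Lemma~\ref{lem:birational}, the map $V/R\to V/W$ is a homeomorphism away from the image of $\Delta_R=WV_0$, and $V_0/R$ (i.e. $V_0/V_1$ with the induced relation $R'$) maps to the discriminant locus in $V/W$ which is the zero locus of $T$. Thus, set-theoretically, $V/R$ decomposes into the open stratum, which carries the structure sheaf of $V_T/W=\Spec\C[V]^W_T$, glued to the closed stratum $V_0/R'=\Spec\C[V_0/V_1]^{R'}$. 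The claim that the kernel of $i^*$ is $\rad(T)$ is essentially bookkeeping: a polynomial $f\in\C[V]^R$ restricts to zero on $V_0$ iff it vanishes on all of $WV_0$ (by $W$-invariance of $f$ and of $\Delta_R$) iff some power of $f$ is divisible by $T$ in $\C[V]^W$, hence in $\C[V]^R$ after multiplying by $T^{N}$; so $\Ker(i^*)=\{f\mid f\in\sqrt{(T)}\text{ in }\C[V]^R\}=\rad(T)$. The substance of the proposition is therefore the surjectivity.

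For surjectivity, the approach I would take is the sheaf-theoretic one advertised in the introduction. Consider on the topological space $Y=V/R$ (or equivalently on $\Spec\C[V]^R$, or even on the Noetherian scheme $\Spec\C[V]^W$ which dominates it) the sheaf $\cO_Y$ of $R$-invariant regular functions and the ideal sheaf $\cI$ cut out by $T$. One gets a short exact sequence of sheaves
$$
0\to\cI\to\cO_Y\to i_*\cO_{Z}\to 0,
$$
where $Z=V_0/R'$ is the closed stratum, provided one checks that the cokernel sheaf is exactly the structure sheaf of the reduced closed stratum $Z$ with its $R'$-invariant functions; this is where Proposition~\ref{prp:red} enters, identifying the induced relation on the stratum as the Lagrangian relation $R|_{V_0/V_1}$ so that $\cO_Z$ has global sections $\C[V']^{R'}$. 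Taking global sections, surjectivity of $i^*$ follows once we know $H^1$ of the ideal sheaf $\cI$ vanishes. Since $Y$ sits over the affine variety $V/W=\Spec\C[V]^W$ with the pushforward of $\cI$ being (a module over) the coherent ideal generated by $T$, and coherent cohomology vanishes on affines, $H^1(\cI)=0$; hence $H^0(\cO_Y)\to H^0(i_*\cO_Z)=H^0(\cO_Z)$ is onto.

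The main obstacle, and the step I would spend the most care on, is making the sheaf-theoretic argument legitimate given that $\C[V]^R$ is typically \emph{non-Noetherian} (as the baby examples in the introduction show). One cannot blithely invoke coherent cohomology on $\Spec\C[V]^R$. The fix is to push everything down to $\C[V]^W$, which \emph{is} Noetherian and finite over a polynomial ring: using $T\C[V]^W\subset\C[V]^R$ one sees $\C[V]^R$ is squeezed between $\C[V]^W$ and the ideal $T\C[V]^W$, so the relevant modules become finitely generated over $\C[V]^W$, and then Serre's vanishing theorem on the affine scheme $\Spec\C[V]^W$ applies. One must also verify two compatibilities: (a) that the stratum $Z$ really is the spectrum of $\C[V']^{R'}$ and that restriction of functions agrees with $i^*$ under this identification — this is where the explicit description of $R'$ in Proposition~\ref{prp:red} and the orthogonal-decomposition structure are used; and (b) that the sheaf sequence above is exact on stalks, i.e. that locally on the open stratum $T$ is invertible (clear, since the open stratum is $V_T/W$) and along the discriminant the quotient $\cO_Y/\cI$ has no embedded contributions beyond $Z$ — here the $1$-regularity hypothesis, guaranteeing $\Delta_R=WV_0$ with $V_0$ of codimension one and $T$ of minimal degree, ensures $(T)$ has the right radical. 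With these in place the global-sections argument closes, and the kernel computation given above finishes the proof.
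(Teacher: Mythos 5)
Your overall strategy coincides with the paper's: realize $i^*$ as restriction of global sections to the closed stratum of the space with functions $V/R$, fit it into a short exact sequence $0\to I\to\cO_{V/R}\to i_*\cO_Z\to 0$ with the ideal sheaf identified as $T\C[V]^W$, and kill $H^1(I)$ using affineness of $V/W$; your kernel computation is also essentially the paper's argument that $T\C[V]^W$ is radical and squares into $(T)$. There is, however, one step where a genuine idea is missing: the justification of $H^1(I)=0$. There is no map from $Y=V/R$ to $V/W$ --- the inclusion $\C[V]^R\subset\C[V]^W$ induces the projection $p:V/W\to V/R$ in the \emph{other} direction --- so $I$ is a sheaf on the quotient topological space $V/R$, which is neither affine nor a scheme, and neither Serre vanishing nor ``coherent cohomology vanishes on affines'' applies to it directly; knowing that the global sections form a finitely generated $\C[V]^W$-module does not by itself control sheaf cohomology on $V/R$. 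The paper's device is to identify $I\cong p_*(\cO_{V/W})$ via multiplication by $T$ and then invoke the Leray spectral sequence of $p$, whose low-degree exact sequence embeds $H^1(V/R,p_*\cO_{V/W})$ into $H^1(V/W,\cO_{V/W})=0$. Some such bridge from cohomology on $V/R$ to cohomology on the honest affine variety $V/W$ is indispensable, and your sketch never supplies it.

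A second, smaller gap: identifying $\cO_Z(Z)$ with $\C[V']^{R'}$ does not follow from Proposition~\ref{prp:red} alone. The $R$-saturation of $V_0$ is the reducible variety $WV_0$, so one must extend an $R'$-invariant function from $V_0$ to an $R$-invariant function on all of $WV_0$ and verify that the extension is regular; the paper does this using the $W$-action (well-definedness of the extension) together with Lemma~\ref{lem:reducible} on regularity of functions defined componentwise on a reducible affine variety. With these two repairs your plan closes, following the same route as the paper.
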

The proof of Proposition~\ref{prp:surj} is given in Subsection~\ref{ss:surj}. 
\subsubsection{}
We complete the proof of Theorem~\ref{thm:semireg-det} by induction. A maximal ideal $\fm\subset\C[V]^R$ is called {\sl typical} if $T\not\in\fm$. Typical maximal ideals correspond
to the maximal ideals of the localization $\C[V]^R_T=\C[V]^W_T$, see
Lemma~\ref{lem:birational}. This implies that the maximal ideals of
$\C[V]$ over a typical $\fm$  form one $W$-orbit
which is an equivalence class of $R$.

Let us now deal with an atypical maximal ideal $\fm\subset \C[V]^R$.
Then $T\in\fm$, so $\fm$ contains $\rad(T)$  and thus by
Proposition~\ref{prp:surj}   it defines a maximal ideal $\fm'$ of $\C[V']^{R'}$. By induction hypothesis, the zeros of $\fm'$ form a 
single equivalence class $X'\subset V'$ of $R'$. Since $R$ is 1-regular, 
the set of zeros of $\fm$ in $V$ is the $W$-orbit of the preimage in 
$V_0$ of $X'$.

 \section{Spaces with functions}
We will use the formalism of ``spaces with functions''~\cite{K}, 1.1, to 
describe quotients of $V$ by an  equivalence 
relation.

\

Recall that a space with functions (over the field $\C$
in this note) is a pair $(X,\cO_X)$ where $X$ is a topological space,
$\cO_X$ is a subsheaf of the sheaf of functions $U\mapsto\Map(U,\C)$
such that
\begin{itemize}
\item if $U$ is an open subset of $X$ and $f\in\cO_X(U)$ then the set
$\{x\in U|f(x)=0\}$ is closed in $U$.
\item $\cO_X(U)$ is a subring of $\Map(U,\C)$. If $f\in\cO_X(U)$ does not vanish at $U$ then $\frac{1}{f}\in\cO_X(U)$.
\end{itemize}

A morphism of spaces with functions $f:(X,\cO_X)\to(Y,\cO_Y)$ is a
continuous map $f:X\to Y$ such that for any $g\in\cO_Y(V)$ the map
$f^*(g):f^{-1}(V)\to\C$ is in $\cO_X(f^{-1}(V))$.

The category of spaces with functions is very convenient to work with reduced algebraic varieties of 
finite type over an algebraically closed field. It appears that it is also convenient for studying quotients by equivalence relations.

\subsection{Factor space}

Given a space with functions $(X,\cO_X)$ and an equivalence relation $R$ on $X$,
one defines a new space with functions $(X/R,\cO_{X/R})$, universal
among morphisms $f:(X,\cO_X)\to (Y,\cO_Y)$ for which $f(x)=f(y)$ for any
$(x,y)\in R$. As a topological space, $X/R$ is precisely the quotient
of $X$  modulo
the equivalence relation (the set $X/R$ with the weakest topology
making the canonical map $\pi:X\to X/R$ continuous). For an open subset $U\subset X/R$ the ring $\cO_{X/R}(U)$ consists of
the functions $f:U\to\C$ for which the composition 
$\pi^{-1}(U)\stackrel{\pi}{\to} U\stackrel{f}{\to}\C$ belongs to $\cO_X(\pi^{-1}(U))$. 

\subsection{Subspace}

Given a space with functions $(X,\cO_X)$ and a subset $Z\subset X$,
there is a canonical structure of a space with functions on $Z$ that is
universal among the morphisms $f:(Y,\cO_Y)\to (X,\cO_X)$ for which $f(Y)\subset X$. As a topological space, $Z$ acquires the induced topology.
For an open subset $V\subset Z$, a function $f:V\to \C$ is regular if
for any $x\in V$ there exists open $U\subset X$ with $x\in U$ and a regular function $\tilde f$ on $U$ such that 
$f_{|U\cap V}=\tilde f_{|U\cap V}$. A map of spaces with functions
$f:(Z,\cO_Z)\to(X,\cO_X)$ is called an embedding if $f$ establishes a
homeomorphism of $Z$ with $f(Z)\subset X$ and identifies the sheaf 
$\cO_Z$ with the sheaf on $f(Z)$ described above.

\begin{lem} \label{lem:surj}
  Let $(X,\cO_X)$ be a space with functions, $Z\subset X$ closed
and let $i:(Z,\cO_Z)\to(X,\cO_X)$ be a closed embedding of 
spaces with functions.
The canonical map of sheaves $\cO_X\to i_*(\cO_Z)$ is surjective whose kernel $I$ is the ideal of regular functions vanishing at $Z$.
\end{lem}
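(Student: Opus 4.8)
The plan is to work locally and identify the two conditions — surjectivity of $\cO_X\to i_*\cO_Z$ and the computation of the kernel — as essentially formal consequences of the definitions of the subspace structure and of the sheaf of regular functions. First I would fix a point $x\in X$ and compute stalks. For the kernel: a section $f\in\cO_X(U)$ over an open $U\subset X$ maps to $0$ in $i_*(\cO_Z)(U)=\cO_Z(U\cap Z)$ precisely when $f$ restricted to $U\cap Z$ is the zero function; since the restriction $\cO_X(U)\to\Map(U\cap Z,\C)$ is literally restriction of functions (both sheaves live inside sheaves of $\C$-valued functions), this says exactly that $f$ vanishes at every point of $Z\cap U$. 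So the kernel sheaf $I$ is, tautologically, the subsheaf of $\cO_X$ of regular functions vanishing on $Z$; the only thing to check is that this is a sheaf of ideals, which is immediate since $\cO_X(U)$ is a ring and the vanishing locus is closed under multiplication by arbitrary elements.

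The substantive point is surjectivity. By the description of the subspace structure recalled just before the lemma, a regular function $g$ on an open $V\subset Z$ is, by \emph{definition}, one that is locally the restriction of a regular function on $X$: for each $z\in V$ there is an open $U_z\subset X$ with $z\in U_z$ and $\tilde g_z\in\cO_X(U_z)$ with $\tilde g_z|_{U_z\cap V}=g|_{U_z\cap V}$. Since $V$ is open in $Z$ and $Z$ carries the subspace topology, $V=Z\cap U$ for some open $U\subset X$, and we may shrink each $U_z$ to lie inside $U$. Thus $g$ lifts locally on $X$ to sections of $\cO_X$, which says exactly that $g$ lies in the image of the map of \emph{sheaves} $\cO_X(U)\to\cO_Z(Z\cap U)$ after passing to a cover — i.e. the sheaf map $\cO_X\to i_*\cO_Z$ is surjective as a map of sheaves (surjectivity of a sheaf map is a stalkwise/local condition, not a condition on global sections). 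Concretely, on stalks at $x\in Z$: any germ in $(i_*\cO_Z)_x=\cO_{Z,x}$ is represented by some regular $g$ on a neighbourhood $V\ni x$ in $Z$, and by definition of $\cO_Z$ there is a representative $\tilde g\in\cO_X(U)$ on a neighbourhood $U\ni x$ in $X$ restricting to $g$ near $x$; this exhibits the germ as an image. For $x\notin Z$ the stalk $(i_*\cO_Z)_x$ vanishes (as $Z$ is closed, $x$ has a neighbourhood missing $Z$), so surjectivity is automatic there.

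Putting the two pieces together gives the short exact sequence of sheaves $0\to I\to\cO_X\to i_*\cO_Z\to 0$ with $I$ the ideal sheaf of functions vanishing on $Z$, which is the assertion. I expect no real obstacle here: the lemma is a bookkeeping statement whose content is entirely in unwinding the universal property defining the subspace structure on $Z$ and remembering that surjectivity of sheaf morphisms is local. The only place a little care is needed is to keep the distinction between the (generally non-surjective) map on \emph{global sections} $\cO_X(X)\to\cO_Z(Z)$ and the surjective map of sheaves — the latter is all that is claimed — and to note that closedness of $Z$ is used exactly once, to kill the stalks of $i_*\cO_Z$ at points outside $Z$ (and implicitly to make ``closed embedding'' meaningful).
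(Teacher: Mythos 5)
Your proposal is correct and follows essentially the same route as the paper's proof: identify the kernel tautologically as the sections vanishing on $Z\cap U$, and check surjectivity on stalks, where it follows directly from the definition of the subspace sheaf $\cO_Z$ (local extendability of regular functions) together with the vanishing of $(i_*\cO_Z)_x$ for $x\notin Z$ because $Z$ is closed. Your added remark distinguishing surjectivity of the sheaf map from surjectivity on global sections is a useful clarification but not a deviation from the paper's argument.
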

\begin{proof}
The canonical map in question is defined by the collection of maps
$$
\cO_X(U)\to i_*\cO_Z(U)=\cO_Z(Z\cap U)
$$
that assign to any regular function on $U$ its restriction to $Z\cap U$.
The kernel is defined by the formula
$$
I(U)=\{f\in\cO_X(U)|f_{|U\cap Z}=0\}.
$$
It remains to verify that the map of sheaves $\cO_X\to i_*\cO_Z$
is surjective. It suffices to prove that for any $x\in X$ the map
of stalks $\cO_{x,X}\to(i_*\cO_Z)_x$ is surjective. If $x\not\in Z$ then
$(i_*\cO_Z)_x=0$. Assume $x\in Z$. Given a stalk
$(U,g\in\cO_Z(U\cap Z))$, there exists $U'$ containing $x$ and a function
$\tilde g\in\cO_X(U')$ such that $\tilde g_{|U'\cap U\cap Z}=g_{|U'\cap U\cap Z}$. The stalk at $x$ defined by $\tilde g$ lifts $(U,g)$.
\end{proof}

\subsection{}
We will need the following elementary result.

\begin{lem}
\label{lem:reducible}
Let $X$ be an affine algebraic variety over an algebraically closed field $k$ with irreducible components $X_1,\ldots,X_n$. A function
$f:X\to k$ is regular iff its restriction to each component $X_i$ 
is regular.
\end{lem}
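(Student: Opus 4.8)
The statement is a standard ``gluing'' fact for reduced affine varieties, so the plan is to reduce it to the affine Nullstellensatz picture and then glue. Let $A=k[X]$ be the (reduced) coordinate ring, so that $\mathrm{Nil}(A)=0$ and the irreducible components $X_i$ correspond to the minimal primes $\fp_1,\dots,\fp_n$ of $A$. The ``only if'' direction is trivial: the restriction of a regular function to a closed subvariety is regular, since $k[X]\to k[X_i]=A/\fp_i$ is the quotient map. So the content is the ``if'' direction: given a function $f:X\to k$ whose restriction $f_i:=f|_{X_i}$ is regular for each $i$, we must produce an element of $A$ inducing $f$.

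First I would record the algebraic reformulation. Each $f_i$ is an element of $A/\fp_i$, and the compatibility that all the $f_i$ come from a single set-theoretic function $f$ on $X$ says precisely that $f_i$ and $f_j$ agree on $X_i\cap X_j$, i.e.\ have the same image in $A/(\fp_i+\fp_j)$ (or rather in the reduced ring $A/\sqrt{\fp_i+\fp_j}$ of the intersection — but since we only need set-theoretic agreement and are comparing functions to $k$, agreement on points is enough, and $k$ is algebraically closed so points of $X_i\cap X_j$ are plentiful). Consider the diagonal map
$$
\varphi: A \longrightarrow \prod_{i=1}^n A/\fp_i .
$$
Its kernel is $\bigcap_i\fp_i=\mathrm{Nil}(A)=0$, so $\varphi$ is injective. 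I claim the tuple $(f_1,\dots,f_n)\in\prod_i A/\fp_i$ lies in the image of $\varphi$; this is exactly the assertion that $f$ is regular, since an element $a\in A$ with $\varphi(a)=(f_1,\dots,f_n)$ is a regular function restricting to $f_i$ on each $X_i$, hence equal to $f$ as a function on $X=\bigcup X_i$.

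The heart of the argument is therefore to identify the image of $\varphi$. Here is the clean way: the cokernel of $\varphi$ is controlled by the intersections, via the exact sequence (a Mayer--Vietoris / \v{C}ech-type sequence for the closed cover $X=\bigcup X_i$) relating $A$, $\prod_i A/\fp_i$ and $\prod_{i<j} A/\sqrt{\fp_i+\fp_j}$. Rather than invoke this in full generality, I would argue by induction on $n$. For $n=1$ there is nothing to prove. For the inductive step, set $Y=X_1\cup\dots\cup X_{n-1}$, a closed subvariety with ideal $I=\fp_1\cap\dots\cap\fp_{n-1}$, and use the standard exact sequence
$$
0\longrightarrow A \longrightarrow A/I \times A/\fp_n \longrightarrow A/(I+\fp_n)\longrightarrow 0,
$$
where the second map is the difference of restrictions; exactness on the left is $I\cap\fp_n=0$ again, and exactness in the middle is the statement that two elements agreeing modulo $I+\fp_n$ glue — this is the Chinese-remainder-type assertion, valid because $A/I$ and $A/\fp_n$ surject onto the common quotient $A/(I+\fp_n)$. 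By induction $f_1,\dots,f_{n-1}$ glue to a regular function $g\in A/I$ on $Y$; the pair $(g,f_n)$ agrees on $Y\cap X_n$ set-theoretically, and since $k$ is algebraically closed $V(I+\fp_n)$ has coordinate ring a quotient of $(A/(I+\fp_n))_{\mathrm{red}}$ on which two regular functions are equal iff they agree at all points, so $g$ and $f_n$ have the same image in $A/(I+\fp_n)$. Exactness then yields the desired $a\in A$ with $a|_Y=g$, $a|_{X_n}=f_n$, hence $a=f$ as functions.

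\textbf{Main obstacle.} The only genuinely delicate point is the passage from ``$f_i$ and $f_j$ agree as set-theoretic functions on $X_i\cap X_j$'' to ``$f_i$ and $f_j$ have the same image in the appropriate quotient ring'': a priori agreement of functions on the $k$-points of $X_i\cap X_j$ only forces equality modulo the nilradical of $A/(\fp_i+\fp_j)$, by the Nullstellensatz over the algebraically closed $k$. This is harmless here because in the exact sequence the term $A/(I+\fp_n)$ can be replaced by its reduction $(A/(I+\fp_n))_{\mathrm{red}}$ without affecting left-exactness or the surjectivity needed in the middle (one checks directly that $a\in A$ maps to $0$ in $A/I\times A/\fp_n$ iff $a=0$, and that a pair mapping to the same class in the reduced quotient can be corrected), so only set-theoretic agreement is ever required. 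Everything else is the routine bookkeeping of minimal primes in a reduced ring and the Chinese remainder theorem.
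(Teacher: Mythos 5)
Your route is genuinely different from the paper's one-line proof (which just invokes Godement's gluing theorem for locally finite closed coverings), but it contains a real gap, and it sits exactly at the point you yourself flag as delicate. The Mayer--Vietoris sequence $0\to A\to A/I\times A/\fp_n\to A/(I+\fp_n)\to 0$ is exact, but the modified sequence in which the last term is replaced by $(A/(I+\fp_n))_{\mathrm{red}}$ is \emph{not} exact in the middle unless $I+\fp_n$ is radical: a pair $(g,f_n)$ with $g-f_n\in\sqrt{I+\fp_n}\setminus(I+\fp_n)$ cannot be ``corrected'', since any $a\in A$ gluing the pair would exhibit $g-f_n$ as an element of $I+\fp_n$. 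Your hypothesis gives only set-theoretic agreement on $Y\cap X_n$, hence only $g-f_n\in\sqrt{I+\fp_n}$, which is strictly weaker in general.

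Moreover, the gap cannot be closed, because the statement is false as written. Take $X=\{y(y-x^2)=0\}\subset\bA^2$ with components $X_1=\{y=0\}$ and $X_2=\{y=x^2\}$, and let $f$ equal $x$ on $X_1$ and $0$ on $X_2$ (consistent at the unique intersection point, the origin). Both restrictions are regular, but $f$ is not regular at the origin: if $f=p/q$ near $0$ with $q(0,0)\neq 0$, then $p(a,0)=a\,q(a,0)$ and $p(a,a^2)=0$ identically, while $p(a,0)-p(a,a^2)$ is divisible by $a^2$; hence $a\mid q(a,0)$ and $q(0,0)=0$, a contradiction. Here $(y)+(y-x^2)=(y,x^2)$ is precisely the non-radical intersection ideal obstructing your gluing step, and a similar failure already occurs for three concurrent lines, so restricting to linear components does not rescue the argument. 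For what it is worth, the paper's own proof has the same soft spot: Godement's theorem glues sections of the inverse-image sheaves $\iota_i^{-1}\cO_X$ that agree \emph{as germs} on overlaps, which is strictly stronger than agreement of the functions $f|_{X_i}$. The honest conclusion is that the lemma needs an extra hypothesis (reducedness of the relevant intersection ideals, or a first-order compatibility of the $f|_{X_i}$ along the pairwise intersections) before either your argument or the cited one goes through.
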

\begin{proof}
Follows from R. Godement's Th\'eor\`eme II.1.3.1~\cite{G}.
The irreducible components of a variety form a closed locally finite covering, so that the \v{C}ech complex corresponding to this covering is exact.
\end{proof}

\subsection{Proof of Proposition~\ref{prp:surj}}
\label{ss:surj}

\subsubsection{}

We denote by $X=V/R$ the quotient considered as a space with functions.
Let $Z\subset V/R$ be the image of $V_0$ in $X=V/R$. By Lemma
\ref{lem:surj} the map $\cO_X\to i_*\cO_Z$ is surjective with the kernel
$I\subset\cO_X$ generated by the functions in $\cO_X$ vanishing at $Z$.

Let $W$ be the Weyl group of $R$.
We denote by $p:V/W\to V/R$ the canonical projection. Note that $V/W$
is the affine variety defined by the ring $\C[V]^W$ of regular functions.

The canonical map $\cO_X\to p_*\cO_{V/W}$ gives rise to the injective map
$\C[V]^R\to\C[V]^W$ discussed in \ref{lem:birational}.
Recall that $T\in\C[V]$ is the $W$-invariant polynomial of a minimal degree
that vanishes at $V_0$.
\begin{lem}
The kernel of the canonical map $\cO_X\to i_*\cO_Z$ is the ideal $I$
(globally) generated by $T\C[V]^W$.
\end{lem}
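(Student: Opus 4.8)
The claim is that the kernel $I$ of $\cO_X\to i_*\cO_Z$, which by Lemma~\ref{lem:surj} consists of the regular functions on $X=V/R$ vanishing on $Z=\pi(V_0)$, coincides with the ideal of $\cO_X$ generated by $T\C[V]^W\subset\C[V]^R$. One inclusion is immediate: $T$ vanishes on $V_0$, hence so does every element of $T\C[V]^W$, and these lie in $\C[V]^R$ by the argument in Lemma~\ref{lem:birational}; therefore the ideal they generate in $\cO_X$ is contained in $I$. The substance is the reverse inclusion: a global section $f\in\Gamma(X,I)=\{f\in\C[V]^R\mid f|_{V_0}=0\}$ must be shown to lie in $T\C[V]^W$ (or, more precisely, in the $\cO_X$-ideal it generates — but since $I$ is a subsheaf of the constant-ish sheaf of regular functions and $f$ is global, it suffices to write $f=Tg$ with $g\in\C[V]^W$, and then remark that $Tg\in\C[V]^R$ forces $g$ to already be a global section upstairs).

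The key step is therefore: \emph{if $f\in\C[V]^R$ vanishes on $V_0$, then $f=Tg$ for some $g\in\C[V]^W$.} First, since $f$ is in particular $W$-invariant and vanishes on $V_0$, it vanishes on the whole discriminant $\Delta_R=\cup_{s\in W}s(V_0)=WV_0$; as $V_0$ is a hypersurface (codimension $1$, by $1$-regularity) cut out by the irreducible-up-to-$W$-action polynomial $T$, and $\C[V]$ is a UFD, $f$ is divisible by each $s(T_0)$ where $T_0$ is a reduced equation of $V_0$; collecting the $W$-orbit gives divisibility by $T$, so $f=Tg$ in $\C[V]$. Here I should check that $T$ is (a power of) a squarefree polynomial so that the UFD divisibility argument is clean; the minimal-degree $W$-invariant vanishing on $V_0$ is naturally $\prod_{s\in W/\mathrm{Stab}(V_0)}s(T_0)$ with $T_0$ the reduced linear (or irreducible) equation of the hyperplane $V_0$, which is squarefree, so this is fine. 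Second, $g=f/T$ is $W$-invariant because both $f$ and $T$ are and $\C[V]$ is a domain, so $g\in\C[V]^W$. Finally I must promote this to the statement about ideals of sheaves: $f=Tg$ exhibits $f$ as lying in the global ideal $T\C[V]^W$, and conversely any element of the $\cO_X$-generated ideal restricts to $0$ on $Z$; invoking that global sections of $I$ are exactly such $f$ (Lemma~\ref{lem:surj}) closes the loop. Strictly, to identify the \emph{sheaf} $I$ with the $\cO_X$-ideal generated by $T\C[V]^W$ I would check equality on a cover: on the typical locus $X\setminus Z$ both sides are the unit ideal (since $T$ is invertible there and $\cO_X|_{X\setminus Z}=\cO_{V/W}|_{\cdots}$), and at points of $Z$ the stalk computation reduces to the global divisibility just established together with Lemma~\ref{lem:reducible} applied to the (finitely many) irreducible components of the relevant affine pieces.

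The main obstacle I anticipate is not the divisibility — that is a clean UFD argument once one knows $T$ is squarefree and the discriminant is reduced — but rather the bookkeeping that moves between the three rings $\C[V]$, $\C[V]^W$, $\C[V]^R$ and the sheaf $\cO_X$ on the non-Noetherian, possibly badly-behaved space $X=V/R$. In particular one must be careful that ``$g\in\C[V]^W$ and $Tg$ vanishes on $\Delta_R$'' really does give $Tg\in\C[V]^R$ and not merely $Tg\in\C[V]^W$: this is exactly the content of the inclusion $T\C[V]^W\subset\C[V]^R$ from Lemma~\ref{lem:birational}, which must be quoted precisely. The other delicate point is that $I$ as a sheaf need not be quasi-coherent in any naive sense, so the identification with ``the $\cO_X$-ideal generated by $T\C[V]^W$'' should be phrased and verified stalk-locally rather than by a global generation statement; Lemma~\ref{lem:reducible} is the right tool for the local verification, exactly as it was flagged for use in this subsection.
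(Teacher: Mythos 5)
Your argument follows the paper's own proof essentially step for step: the substantive content in both is that an $R$-invariant function vanishing on $\Delta_R=WV_0$ factors as $Tg$ with $g$ automatically $W$-invariant (since $f$ and $T$ are), together with the converse inclusion $T\C[V]^W\subset\C[V]^R$; the only difference is that the paper runs this directly over every $R$-invariant open $U\subset V$, which yields the sheaf-level statement at once, whereas you argue on global sections and then sketch the stalk-local verification. One caution on the single step you justify more fully than the paper does (the paper merely asserts the divisibility): the product $\prod_{s\in W/\Stab(V_0)}s(T_0)$ of the $W$-orbit of a linear equation of $V_0$ is a priori only $W$-semi-invariant, so identifying the minimal-degree $W$-invariant $T$ with this squarefree product --- which is what your UFD argument rests on --- requires checking that the corresponding character of $W$ is trivial.
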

\begin{proof}
Let $U$ be an open $R$-invariant subset of $V$.
Any function $f\in\cO_V(U)^R\subset\cO_V(U)^W$ vanishing on $\Delta_R=WV_0$ is divisible by $T$, $f=Tg$, 
$g\in\cO_V(U)$. Obviously, since $f\in\cO_V(U)^W$, then $g\in\cO_V(U)^W$. On the
other hand, if $g\in\cO_V(U)^W$ then $Tg\in\cO_V(U)^R$.
\end{proof}

\begin{crl}
\label{crl:surj1}
The restriction to $Z$ defines a surjective ring homomorphism
$$\cO_X(X)\to\cO_Z(Z).$$
\end{crl}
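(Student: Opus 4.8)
The statement to prove is Corollary~\ref{crl:surj1}: the restriction map $\cO_X(X)\to\cO_Z(Z)$ is surjective. Here $X=V/R$ and $Z\subset X$ is the image of $V_0$, which as a space with functions is identified (via Proposition~\ref{prp:red}) with $V_0/V_1$ equipped with the Lagrangian equivalence relation $R'=R|_{V_0/V_1}$; in particular $\cO_Z(Z)=\C[V']^{R'}$ and $\cO_X(X)=\C[V]^R$.

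The plan is as follows. By Lemma~\ref{lem:surj} the morphism of sheaves $\cO_X\to i_*\cO_Z$ on the topological space $X$ is surjective, with kernel the ideal sheaf $I$ computed in the Lemma just above: locally, $I(U)=T\cdot\cO_V(\pi^{-1}U)^W$. So we have a short exact sequence of sheaves $0\to I\to\cO_X\to i_*\cO_Z\to 0$ on $X$. Taking global sections (= pushing forward along $X\to\mathrm{pt}$) gives a long exact sequence, and surjectivity of $\cO_X(X)=H^0(X,\cO_X)\to\cO_Z(Z)=H^0(Z,\cO_Z)=H^0(X,i_*\cO_Z)$ will follow once we show the connecting map into $H^1(X,I)$ is zero, for which it suffices that $H^1(X,I)=0$.

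First I would reduce the computation of $H^1(X,I)$ to something on the affine variety $Y:=V/W=\Spec\C[V]^W$ via the finite projection $p\colon Y\to X$. The point is that $I$, as a sheaf on $X$, is (up to the $W$-action bookkeeping) the pushforward of the ideal sheaf $T\cdot\cO_Y$ on $Y$; more precisely, because taking $W$-invariants is exact in characteristic zero and $p$ is a finite (hence affine) surjection with $p_*\cO_Y = $ the sheaf $U\mapsto \cO_V(\pi^{-1}U)^W$ on $X$ wait — one must be careful, $\cO_X$ is not $p_*\cO_Y$, only a subsheaf. The cleaner route: cover $X$ by the two opens $X_T=\{T\neq 0\}$ and a neighbourhood of $Z$; actually the really clean route is to observe that $X$ has a natural structure making it quasi-affine-like, and to invoke Lemma~\ref{lem:reducible} (via Godement's exactness of the \v Cech complex for a closed locally finite cover by irreducible components). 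Concretely: the localization $\cO_X(X)_T = \C[V]^W_T$, so on the open set $X_T$ the sheaf $I$ is the unit ideal (it contains $T$, a unit there), hence $H^1(X_T, I|_{X_T})=0$ since $X_T\cong\Spec\C[V]^W_T$ is affine; meanwhile $Z=X\setminus X_T$ is closed and $I$ restricted to $Z$ is zero. Using the Mayer–Vietoris / excision for the closed–open decomposition $X = X_T \sqcup Z$ and that $I$ is supported away from... no: $I$ is \emph{not} supported on $Z$, it is supported on $X_T$ — so $I = j_!(I|_{X_T})$ where $j\colon X_T\hookrightarrow X$ is the open inclusion? That is false too since a section of $I$ near $Z$ is $Tg$ which need not extend by zero.

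Let me record the honest obstacle and the intended fix. The main obstacle is precisely that $I$ is neither the extension-by-zero of a sheaf on the affine open $X_T$ nor supported on $Z$; one genuinely needs a vanishing-cohomology input on $X$ itself. I would handle this by transporting to $Y=V/W$: push the exact sequence $0\to T\cO_Y\to\cO_Y\to\cO_Y/T\cO_Y\to 0$ forward along the finite map $p$, take $W$-invariants (exact since $|W|$ is invertible), and identify the resulting three sheaves on $X$ with $I$, $\cO_X$ (this uses that $\cO_X = (p_*\cO_Y)$ fails in general — but it holds after restricting to the relevant strata, or one argues that $(p_*\cO_Y)^{\text{genuine invariant-function subsheaf}}$ still has vanishing $H^1$ because it sits inside the coherent $p_*\cO_Y$ on the affine $Y$ and... ). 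The workable statement: $H^i(X, p_*\cF)\cong H^i(Y,\cF)^{\,?}$ — actually $p$ being finite gives $H^i(X,p_*\cF)=H^i(Y,\cF)$, and $Y$ affine plus $T\cO_Y$ quasi-coherent gives $H^1(Y, T\cO_Y)=0$; finally $W$-invariants commute with $H^1$ and preserve vanishing. Hence $H^1(X, I)=0$, the connecting map vanishes, and $\cO_X(X)\to\cO_Z(Z)$ is surjective. The one delicate check I expect to be the crux is the sheaf identification $(p_*(T\cO_Y))^W \cong I$ as sheaves on $X$, i.e.\ that the $W$-invariant regular functions on $\pi^{-1}(U)$ divisible by $T$ are exactly $T\cdot\cO_V(\pi^{-1}U)^W$ — but this is exactly the content of the Lemma preceding the Corollary, so it is already in hand.
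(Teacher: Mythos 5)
Your final route is essentially the paper's: both form the short exact sequence $0\to I\to\cO_X\to i_*\cO_Z\to 0$, identify the kernel $I$ with $T\cdot p_*\cO_{V/W}\cong p_*\cO_{V/W}$ via the Lemma preceding the Corollary, and conclude by showing $H^1(X,I)=0$ using affineness of $V/W$. The one repair needed is in that last step: since $X=V/R$ is only a space with functions and not a scheme, you should not appeal to exactness of $p_*$ for finite morphisms (nor to the superfluous $W$-invariants, which are already built into the affine variety $V/W$ with ring $\C[V]^W$); the paper instead uses the five-term exact sequence of the Leray spectral sequence for $p\colon V/W\to X$, which gives the unconditional injection $H^1(X,p_*\cO_{V/W})\hookrightarrow H^1(V/W,\cO_{V/W})=0$ and thus exactly the vanishing you need.
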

\begin{proof}
We have a short exact sequence of sheaves on $X=V/R$
$$
0\to p_*(\cO_{V/W})\to \cO_X\to i_*\cO_Z\to 0,
$$
where the first map is the multiplication by $T\in\C[V]^R$. This short
exact sequence gives rise to the exact sequence
$$
0\to\C[V]^W\stackrel{T}{\to}\C[V]^R\to\cO_Z(Z)\to H^1(X,p_*(\cO_{V/W})).
$$
The exact sequence of the low-degree terms of the spectral sequence
$$
E_2^{pq}=H^p(X,R^qp_*(\cO_{V/W}))\Longrightarrow H^n(V/W,\cO_{V/W})
$$
gives an embedding of $E_2^{10}=H^1(X,p_*\cO_{V/W})$ into
$E_\infty^1=H^1(V/W,\cO_{V/W})=0$ (as $V/W$ is affine), so 
$H^1(X,p_*(\cO_{V/W}))$ vanishes. This proves the claim.
\end{proof}

By~\ref{crl:surj1} the map
\begin{equation}
\label{eq:surj}
\C[V]^R=\cO_X(X)\to\cO_Z(Z)=\C[WV_0]^R
\end{equation}
is surjective, where $Z\subset X$ is the image of $V_0\subset V$.
We will now give another presentation of the target of this map.

Recall that the equivalence relation $R|_{V_0}$ induces an equivalence
relation on the quotient $V_0/V_1$. We will denote it by $R'$.

We have a restriction map 

\begin{equation}
\label{eq:surj2}
\C[V]^R\to\C[V_0]^{R|_{V_0}}=\C[V_0/V_1]^{R'}
\end{equation}
that factors through (\ref{eq:surj}). This defines a canonical map
\begin{equation}
\label{eq:iso}
r:\C[WV_0]^R\to \C[V_0/V_1]^{R'}.
\end{equation}
\begin{lem}
The map (\ref{eq:iso})
is a bijection.
\end{lem}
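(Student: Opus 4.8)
The plan is to show that the map $r:\C[WV_0]^R\to\C[V_0/V_1]^{R'}$ is injective and surjective separately, by relating both sides to rings of functions on concrete subvarieties of $V$. First I would unwind the definitions: $\C[WV_0]^R=\cO_Z(Z)$ where $Z$ is the image of $V_0$ in $V/R$, i.e. $R$-invariant regular functions on the closed subset $WV_0\subset V$; the map $r$ is restriction of such a function to the single component $V_0$ and then observing it is constant along $V_1$-cosets (which holds because $E_{V_0}\subset R$, so any $R$-invariant function on $V_0$ is $V_1$-invariant), hence descends to $V_0/V_1$ and is $R'$-invariant because $R'$ is the induced relation.

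For injectivity, suppose $f\in\cO_Z(Z)$ restricts to $0$ on $V_0$. Since $WV_0=\bigcup_{s\in W}s(V_0)$ and $f$ is $W$-invariant (indeed $R$-invariant, and $W\subset R$ as the atypicality-$0$ components), $f(s(v))=f(v)=0$ for every $v\in V_0$, so $f$ vanishes on all of $WV_0$; a regular function on a variety vanishing everywhere is $0$. Here one should be a little careful that $\cO_Z(Z)$ really is a ring of honest functions on the set $WV_0/R$ and that $f=0$ as an element means $f$ is the zero function — this is immediate from the definition of the factor space with functions, since $\cO_Z(Z)$ is by construction a subring of $\Map(WV_0/R,\C)$.

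For surjectivity, the key point is that a regular $R'$-invariant function $g$ on $V_0/V_1$ pulls back to an $R|_{V_0}$-invariant regular function on $V_0$, and one must extend it to an $R$-invariant regular function on the whole of $WV_0$. The natural recipe is: first spread $g$ around by the Weyl group, defining $\tilde g$ on $WV_0$ by $\tilde g(s(v))=g(v)$ for $v\in V_0$; this is well-defined provided that whenever $s(v)=s'(v')$ with $v,v'\in V_0$ one has $(v,v')\in R'$, which follows because $s^{-1}s'$ then carries $v'$ to $v$, lies in $W\subset R$, and (being an isometry fixing a point of $V_0$ up to $R$) its action is compatible with $R|_{V_0}$ — this bookkeeping using the identities $p_1(\Gamma_s\circ L^\alpha)=p_1(L^\alpha)$ etc.\ from the subsection on the Weyl group is where the 1-regularity hypothesis enters. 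Then one checks $\tilde g$ is regular on each irreducible component $s(V_0)$ of $WV_0$ (it is the transport of a regular function by the isometry $s$), hence regular on $WV_0$ by Lemma~\ref{lem:reducible}, and $R$-invariant by construction. This $\tilde g$ is the required preimage, since $r(\tilde g)=g$ essentially by definition.

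The main obstacle I expect is the well-definedness of the Weyl-group spreading construction in the surjectivity step, i.e.\ checking that different ways of writing a point of $WV_0$ as $s(v)$ lead to the same value of $\tilde g$; this is exactly the compatibility of $W$-action with the induced relation $R'$ on $V_0/V_1$, and it must be extracted from 1-regularity (which pins down $\Delta_R=WV_0$ and controls which components $L^\alpha$ restrict to $V_0\times V_0$) together with the structural lemmas on composition of Lagrangian relations. A secondary, more routine point is verifying regularity of $\tilde g$ across components, which reduces to Lemma~\ref{lem:reducible} once one knows that $s(V_0)\cap s'(V_0)$ causes no trouble — and it does not, because $\tilde g$ restricted to any intersection agrees from both sides precisely by the well-definedness just established.
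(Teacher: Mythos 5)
Your argument is correct and follows essentially the same route as the paper: injectivity because an $R$-invariant function on $WV_0$ vanishing on $V_0$ vanishes everywhere, and surjectivity by spreading an $R'$-invariant function over $WV_0$ via the Weyl group, checking well-definedness from $z=t^{-1}sy$ with $W\subset R$, and obtaining regularity componentwise from Lemma~\ref{lem:reducible}. The only cosmetic difference is that you attribute the well-definedness to $1$-regularity, whereas it really only uses that $\Gamma_{s^{-1}s'}\subset R$ and that $R\cap(V_0\times V_0)$ induces $R'$ (Proposition~\ref{prp:red}).
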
 
\begin{proof}
It is easy to see that $r$ is always injective: if an $R$-invariant
function vanishes on $V_0$, it is zero. We are going to prove that
$r$ is surjective. 

We claim that any $R'$-invariant $\C$-valued function on $V_0/V_1$ lifts
to an $R$-invariant function on $WV_0$. Indeed, any $x\in WV_0$ can
be presented as $sy$ where $y\in V_0$. If $x=sy=tz$ for $s,t\in W$ and
$y,z\in V_0$, we have $z=t^{-1}sy$, so $f(z)=f(y)$. This proves that
we can define $\tilde f(z)=f(y)$ where $z=sy$ and this definition will 
be independent of the presentation $z=sy$. This defines an $R$-invariant
function $\tilde f:WV_0\to\C$ extending $f:V_0\to\C$. It remains to prove that the function $\tilde f$ is regular. This follows from
Lemma~\ref{lem:reducible}.
\end{proof}

\subsubsection{}
Here is the last claim of Proposition~\ref{prp:surj}.
The kernel of $i^*:\C[V]^R\to\C[V']^{R'}$ is $T\C[V]^W$. This is a radical ideal as $\C[V']^{R'}$ is reduced. On the other hand,
$(T\C[V]^W)^2=TT\C[V]^W\subset T\C[V]^R=(T)$. Therefore,
$T\C[V]^W=\rad(T)$.

\section{Example: weak generalized root systems}

In this section we apply Theorem~\ref{thm:semireg-det} to prove detectability of the equivalence
  relations defined in \ref{sss:superalgebras}. The rings defined by these relations are isomorphic to the centers of universal enveloping algebras of finite-dimensional Kac-Moody superalgebras.

  In fact we consider a larger class of
  equivalence relations coming from weak generalized root systems
  in the sense of 
  Serganova, see~\cite{S}, Sect. 7. We check that these relations are Lagrangian
  and semiregular,  see~\ref{ss:semiregular}. Therefore they are detectable.
  The case of Kac-Moody superalgebras is related to  the generalized root systems that are special cases of weak generalized roots systems.

In this section
 $V$ is a finite-dimensional complex vector space with a nondegenerate
symmetric bilinear form.

\subsection{Weak generalized root systems}

Recall a generalization of the notion of root system due to V.~Serganova, \cite{S}.

\begin{dfn}
\label{dfn:WGRS}
 A finite subset $\Delta\subset V\setminus\{0\}$ is called a weak generalized root system (WGRS) if
\begin{itemize}
\item[1.] $\Delta=-\Delta$.
\item[2.] If, for $\alpha\in\Delta$, $\langle\alpha|\alpha\rangle\ne 0$,
then for any $\beta\in\Delta$ one has $k_{\alpha,\beta}:=\frac{2\langle\alpha|\beta\rangle}{\langle\alpha|\alpha\rangle}\in\Z$ and
$s_\alpha(\beta):=\beta-k_{\alpha,\beta}\alpha\in \Delta$.
\item[3.]If, for $\alpha\in \Delta$, $\langle\alpha|\alpha\rangle= 0$,
then for every $\beta\in\Delta$ such that $\langle\alpha|\beta\rangle\ne 0$
at least one of $\beta+\alpha$, $\beta-\alpha$, belongs to $\Delta$. 
\end{itemize}
\end{dfn}

\begin{dfn}
\label{dfn:symm}
Let $\Delta\subset V$ be a symmetric subset (that is, satisfying the 
condition $\Delta=-\Delta$). 
We say that $\Delta$ is {\sl indecomposable}
 if $\Delta$ can not be decomposed into a union of two non-empty mutually orthogonal symmetric subsets.
\end{dfn}

Let $\Delta$ be a WGRS. We denote by $\Delta^\iso$ the set of $\alpha\in\Delta$ such that
$\langle\alpha|\alpha\rangle= 0$ {\sl (isotropic roots)} and let $\Delta^\an=\Delta\setminus \Delta^\iso$ {\sl(anisotropic roots)}.
We define the Weyl group $W$ as the group generated by
the reflections $s_\alpha$, $\alpha\in \Delta^\an$.

 \subsection{Properties of WGRS} 
In this subsection (except for \ref{lem:two-step}(3)) a WGRS
$\Delta$ is not required to be finite.

We will use the following easy fact: if $\beta$, $\beta'$ are nonorthogonal isotropic roots and $\gamma:=\beta'-\beta\in\Delta$, then
$\gamma\in\Delta^{\an}$ and $s_{\gamma}\beta=\beta'$.

\begin{lem}\label{lem:non-orthogonal}
Let $\Delta$ be an indecomposable WGRS.
Let $\beta,\beta'$ be mutually orthogonal isotropic roots
and $\beta'\not=\pm\beta$.
Then there exists $\delta\in \Delta^\an$
which is not orthogonal to both 
$\beta$ and $\beta'$.
\end{lem}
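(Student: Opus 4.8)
The plan is to argue by connectedness of $\Delta$, or rather by a ``propagation'' argument along non-orthogonality. First I would record the basic facts we are allowed to use: since $\Delta$ is indecomposable it cannot be split into two orthogonal symmetric pieces, so the graph on $\Delta$ whose edges join non-orthogonal roots is connected; also the stated easy fact, that if $\beta,\beta'$ are non-orthogonal isotropic roots with $\gamma:=\beta'-\beta\in\Delta$ then $\gamma$ is anisotropic and $s_\gamma\beta=\beta'$; and axiom (3), which for an isotropic $\alpha$ non-orthogonal to $\delta$ guarantees $\delta\pm\alpha\in\Delta$ for at least one sign.

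The core of the argument is the following reduction. Suppose, for contradiction, that no $\delta\in\Delta^{\an}$ is non-orthogonal to both $\beta$ and $\beta'$. Consider the set $\Phi$ of roots reachable from $\beta$ by a chain of non-orthogonality steps that avoids creating such a $\delta$; I want to show $\Phi$ is a proper, non-empty, symmetric, $\langle\cdot|\cdot\rangle$-closed subset orthogonal to its complement, contradicting indecomposability — unless $\beta'$ itself gets pulled into the same orthogonality class, which would contradict $\beta\perp\beta'$ together with the absence of a connecting anisotropic root. More concretely, I would first treat the case that some anisotropic $\delta$ is non-orthogonal to $\beta'$: then by hypothesis $\delta\perp\beta$, and I look at $s_\delta\beta=\beta$ and $s_\delta\beta'=\beta'-k_{\delta,\beta'}\delta$, which is an isotropic root still orthogonal to $\beta$; replacing $\beta'$ by it and iterating, one tries to reduce to the case where $\beta'$ and $\beta$ lie in a common anisotropic-root span, forcing a connecting root. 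Symmetrically if some anisotropic $\delta$ is non-orthogonal to $\beta$.

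The honest engine, though, should be: because $\Delta$ is indecomposable, there is a chain $\beta=\gamma_0,\gamma_1,\dots,\gamma_m=\beta'$ of roots with consecutive ones non-orthogonal. Pick a shortest such chain. If some $\gamma_i$ is anisotropic and simultaneously non-orthogonal to $\beta$ and $\beta'$ we are done, so in a shortest chain the interior vertices that are anisotropic must fail one of the two non-orthogonalities; using $s_{\gamma_i}$ on such a vertex rewrites the chain into a shorter or equally short one moving the ``defect'' toward an endpoint, and using axiom (3) on isotropic interior vertices to trade $\gamma_{i-1}\not\perp\gamma_i\not\perp\gamma_{i+1}$ for a direct link $\gamma_{i-1}\pm\gamma_{i+1}\in\Delta$ or for an anisotropic $\gamma_{i+1}-\gamma_i$ (by the easy fact) shortens the chain. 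Iterating, a shortest chain has length $\le 2$ with an anisotropic midpoint, i.e.\ exactly the desired $\delta$, giving the contradiction.

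I expect the main obstacle to be making the induction on chain length genuinely decrease a well-founded quantity: applying a reflection $s_{\gamma_i}$ fixes the root system but can in principle lengthen a chain elsewhere, and axiom (3) only gives \emph{one} of the two sums $\gamma_{i-1}\pm\gamma_{i+1}$, so one must check that in each configuration (both endpoints of a bad triple isotropic; one isotropic one anisotropic; both anisotropic) at least one legal move strictly reduces the lexicographic pair (length, number of anisotropic interior vertices failing a non-orthogonality). The bookkeeping of which roots stay orthogonal to $\beta$ (or to $\beta'$) under these moves — which is what preserves the ``bad'' hypothesis we are trying to contradict — is the delicate part, and is presumably where the authors use the precise form of axioms (2) and (3) together with $\beta\perp\beta'$, $\beta'\ne\pm\beta$.
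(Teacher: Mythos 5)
Your starting point coincides with the paper's --- indecomposability gives a chain $\beta=\gamma_1,\dots,\gamma_n=\beta'$ with consecutive entries non-orthogonal, and minimality of $n$ forces $\langle\gamma_i|\gamma_j\rangle=0$ for $|i-j|>1$ --- but the ``engine'' you describe is not a proof, and you say so yourself: the well-founded quantity for your induction is never exhibited, and the bookkeeping you defer is exactly the content of the lemma. Beyond the vagueness there is a concretely false step: axiom (3) applied at an isotropic interior vertex $\gamma_i$ produces a root of the form $\gamma_i\pm\gamma_{i-1}$ (or $\gamma_i\pm\gamma_{i+1}$), never ``a direct link $\gamma_{i-1}\pm\gamma_{i+1}\in\Delta$'' --- by minimality $\gamma_{i-1}\perp\gamma_{i+1}$, and no axiom manufactures a root from two orthogonal ones. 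Your first paragraph (the set $\Phi$ of ``reachable'' roots, and the iteration replacing $\beta'$ by $s_\delta\beta'$) is a separate sketch that is not developed far enough to evaluate; in particular nothing there explains why the process terminates in a configuration where $\beta$ and $\beta'$ ``lie in a common anisotropic-root span''.

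The idea you are missing is that one should not try to shorten the chain but to \emph{accumulate} it into a single root. The paper first arranges (using minimality and the reflections $s_{\gamma_i}$ applied to $\gamma_{i-1}$, which stay non-orthogonal to both neighbours because $\langle\gamma_{i-1}|\gamma_{i+1}\rangle=0$) that every $\gamma_i$ in a minimal chain is isotropic. Then it forms the telescoping partial sums $\delta_j=\sum_{i=1}^{j}\gamma_i$, adjusting signs of the $\gamma_j$ as needed: axiom (3) shows inductively that each $\delta_j$ is a root, since $\langle\delta_{j-1}|\gamma_j\rangle=\langle\gamma_{j-1}|\gamma_j\rangle\ne 0$. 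Both $\delta_{n-1}$ and $\delta_n$ are non-orthogonal to $\beta=\gamma_1$ and to $\beta'=\gamma_n$ (again because non-adjacent entries are orthogonal), and
$\langle\delta_n|\delta_n\rangle-\langle\delta_{n-1}|\delta_{n-1}\rangle=2\langle\gamma_{n-1}|\gamma_n\rangle\ne 0$,
so at least one of them lies in $\Delta^\an$; that one is the desired $\delta$. This two-line dichotomy replaces the entire descent argument you were trying to set up.
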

\begin{proof}
Since $\Delta$ is irreducible, there is a chain
$$\beta=\gamma_1,\ldots,\gamma_n=\beta'$$
of roots such that 
$\langle\gamma_i|\gamma_{i+1}\rangle\ne 0$. We can assume that $n$
is minimal possible. Then $\langle\gamma_i|\gamma_j\rangle=0$ for
$|i-j|>1$. We will show that one can choose a minimal chain as above such that all $\gamma_i$ are isotropic. Let $\gamma_1,\ldots,\gamma_{i-1}$ be isotropic and $\gamma_i\in \Delta^\an$. Then 
$\gamma_i':=s_{\gamma_i}(\gamma_{i-1})=\gamma_{i-1}-c\gamma_i$ with 
$c\ne 0$, so replacing $\gamma_i$ with $\gamma_i'$ we get another chain
connecting $\beta$ with $\beta'$ with $\gamma_i'$ isotropic.
Note that for a shortest chain of $\gamma_i\in \Delta^\iso$ connecting $\gamma_1$ with $\gamma_n$, the sum $\delta_j=\sum\limits_{i=1}^j\gamma_i$ is a root (we allow ourselves to replace $\gamma_j$ with $-\gamma_j$ if needed). Thus $\delta_n=\sum\limits_{i=1}^n \gamma_i$ and
$\delta_{n-1}=\sum\limits_{i=1}^{n-1} \gamma_i$ are two roots 
which are not orthogonal to $\beta,\beta'$, and 
$\langle\delta_n\mid\delta_n\rangle\not=
\langle\delta_{n-1}\mid\delta_{n-1}\rangle$, so at least 
one of them is not isotropic.
\end{proof}

\begin{lem}
\label{lem:two-step}
Let $\Delta\subset V$ be an indecomposable WGRS and $\beta,\beta'$ be two 
isotropic roots. Let $V_1=\Span(\beta,\beta')$ and $V_0=V_1^\perp$.
\begin{itemize}
\item[1.] If $\langle\beta\mid\beta'\rangle\not=0$, then 
$\beta'=\pm s_{\alpha}\beta$ for some
$\alpha\in\Delta^{\an}$.
\item[2.] If $\langle\beta\mid\beta'\rangle=0$ and $\beta'\not=\pm\beta$, 
then there exist $\alpha_1,\alpha_2\in \Delta^{\an}$ such that
for $w:=s_{\alpha_2}s_{\alpha_1}$ one has  $w(\beta)=\pm\beta'$.
\item[3.] If $\Delta$ is finite, then there exist $\alpha_1,\alpha_2\in \Delta^{\an}$ such that
for $w:=s_{\alpha_2}s_{\alpha_1}$ one has  $w(\beta)=\pm\beta'$,
$w^2=\id$ and $w|_{V_0/V_1}=\id$.
\end{itemize}
\end{lem}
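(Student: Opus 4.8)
The plan is to build on parts (1) and (2) of the lemma, treating them as the base cases, and then upgrade the element $w$ produced in part (2) so that it additionally satisfies $w^2 = \id$ and $w|_{V_0/V_1} = \id$. First I would dispose of the degenerate cases: if $\beta' = \pm\beta$ take $w = \id$; if $\langle\beta\mid\beta'\rangle \neq 0$, part (1) gives $\beta' = \pm s_\alpha\beta$ for some $\alpha \in \Delta^{\an}$, and here $s_\alpha^2 = \id$ automatically, while $V_1 = \Span(\beta,\beta')$ is $s_\alpha$-stable (since $\alpha \in V_1$ — indeed $\alpha$ is proportional to $\beta - \beta'$ or $\beta + \beta'$ by the easy fact quoted before Lemma~\ref{lem:non-orthogonal}), so $s_\alpha$ acts on $V_0 = V_1^\perp$ and one checks it acts trivially there because $s_\alpha$ is the identity on $\alpha^\perp \supset V_0$... wait, that needs care: $V_0 \subset \alpha^\perp$ exactly when $\alpha \in V_1$, which holds. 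So in the non-orthogonal case $w = s_\alpha$ already works. The substantive case is $\langle\beta\mid\beta'\rangle = 0$, $\beta' \neq \pm\beta$.

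In that case, by Lemma~\ref{lem:non-orthogonal} (using finiteness and indecomposability), there is $\delta \in \Delta^{\an}$ not orthogonal to either $\beta$ or $\beta'$. The natural candidate is to take $w = s_{\beta'}^{\vee} s_{\delta}$-type composition, but since $\beta,\beta'$ are isotropic they have no reflections; instead I would follow the mechanism of part (2): one reflects $\beta$ by $s_\delta$ to land on an isotropic root non-orthogonal to $\beta'$, then reflects again. Concretely, set $\gamma = s_\delta(\beta) = \beta - k_{\delta,\beta}\delta$; since $\langle\delta\mid\beta'\rangle \neq 0$ and $\langle\beta\mid\beta'\rangle = 0$ we get $\langle\gamma\mid\beta'\rangle \neq 0$, and $\gamma$ is isotropic. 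Now $\gamma$ and $\beta'$ are non-orthogonal isotropic roots, so $\gamma - \beta' \in \Delta^{\an}$ (by the easy fact) or $\gamma + \beta' \in \Delta^{\an}$; call it $\alpha_2$, an anisotropic root with $s_{\alpha_2}(\gamma) = \pm\beta'$. Thus $w := s_{\alpha_2} s_\delta$ sends $\beta$ to $\pm\beta'$. This reproduces part (2) with explicit $\alpha_1 = \delta$, $\alpha_2$.

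The hard part will be arranging the two extra conditions simultaneously. For $w^2 = \id$: $w = s_{\alpha_2}s_\delta$ is a product of two reflections in the orthogonal group $O(V)$, so it is a rotation in the plane $\Span(\alpha_2,\delta)$ and the identity on its orthogonal complement; $w^2 = \id$ iff the angle is $\pi$, i.e. iff $\langle\alpha_2\mid\delta\rangle = 0$. So I would need to choose the chain/reflections so that the two anisotropic roots used are orthogonal to each other — this is the real obstacle, and I expect it requires a genuine use of finiteness plus a short case analysis on the rank-two WGRS generated by the relevant roots (the sub-WGRS spanned by $\beta,\beta',\delta$ and the resulting roots), possibly invoking the classification of rank $\leq 2$ generalized root systems, or a direct computation showing that when $\langle\alpha_2\mid\delta\rangle \neq 0$ one can replace the pair by an orthogonal pair achieving the same effect on $\beta$. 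For $w|_{V_0/V_1} = \id$: since $w$ is the identity on $\Span(\alpha_2,\delta)^\perp$, it suffices to show $\Span(\alpha_2,\delta)^\perp \supset V_0$ modulo $V_1$, equivalently that $\alpha_2,\delta \in V_1 + (\text{anything pairing trivially with } V_0)$; cleaner is to show $\alpha_2$ and $\delta$ both lie in $V_1 = \Span(\beta,\beta')$ — but that is generally false, so instead I would verify that $w$ preserves $V_1$ (it does, since $w(\beta) = \pm\beta'$ and one checks $w(\beta') = \pm\beta$ using $w^2 = \id$ and the symmetry of the construction) and that $w$ acts trivially on $V_0/V_1$ because the "moving space" $\Span(\alpha_2,\delta)$ is contained in $V_1 + \bC\beta + \bC\beta' = V_1$ after the orthogonality normalization — here again finiteness and the orthogonal-pair choice are what make it work. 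In summary: the proof is (i) reduce to the orthogonal isotropic case, (ii) produce $w = s_{\alpha_2}s_{\alpha_1}$ as in (2) via an explicit two-step reflection chain, (iii) use finiteness to choose $\alpha_1 \perp \alpha_2$ so that $w^2 = \id$, and (iv) check that with this choice the moving plane lies in $V_1$ so $w|_{V_0/V_1} = \id$; step (iii) is the crux.
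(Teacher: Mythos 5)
Your treatment of parts (1) and (2) is fine (your version of (2), with $w=s_{\alpha_2}s_{\delta}$, is a legitimate variant of the paper's), but part (3) has a genuine gap that you yourself flag as ``the crux'': you never prove that the two anisotropic roots can be chosen orthogonal, you only say this should follow from finiteness plus an unspecified case analysis or a rank-two classification. Worse, with your specific choice of reflections ($\alpha_1=\delta$ and $\alpha_2=\pm(s_\delta\beta\mp\beta')$) the orthogonality is genuinely not automatic: a direct computation gives $\langle\delta\mid\alpha_2\rangle=-\langle\delta\mid\beta\rangle\mp\langle\delta\mid\beta'\rangle$, which need not vanish. The paper's trick is to use a \emph{different} pair of reflections: set $\beta_1:=s_\delta\beta$ (an isotropic root non-orthogonal to both $\beta$ and $\beta'$) and take $\gamma=\beta-\beta_1$, $\gamma'=\beta'-\beta_1$, both anisotropic, with $w=s_{\gamma'}s_{\gamma}$ sending $\beta$ to $\beta'$. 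The point of this choice is that $\gamma-\gamma'=\beta-\beta'$ is isotropic; since for finite $\Delta$ the set $\Delta^{\an}$ is a classical root system, $\langle\gamma-\gamma'\mid\gamma-\gamma'\rangle=0$ forces $\langle\gamma\mid\gamma'\rangle=0$, and $w^2=\id$ falls out for free. This is the single missing idea, and without it step (iii) of your plan is not a proof.

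Your step (iv) is also incorrect as written. You propose to show that the ``moving plane'' $\Span(\alpha_1,\alpha_2)$ lies in $V_1=\Span(\beta,\beta')$; but $V_1$ is totally isotropic while $\delta$ (and likewise $\gamma,\gamma'$) is anisotropic, so the moving plane is never contained in $V_1$. The correct argument (with the paper's choice of $\gamma,\gamma'$) is to decompose $V_0=V_2\oplus\C\beta'$ where $V_2=\Span(\beta,\beta',\beta_1)^\perp$: every $v\in V_2$ is orthogonal to $\gamma$ and $\gamma'$, hence fixed by $w$, while $w\beta'-\beta'=\beta-\beta'\in V_1$; together these give $wv\in v+V_1$ for all $v\in V_0$. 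So both of the extra conclusions in (3) hinge on the same choice of reflections that your proposal does not make.
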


\begin{proof} If $\langle\beta\mid\beta'\rangle\not=0$, then,
changing the signs if necessary we can assume that 
$\alpha=\beta-\beta'$ is a root. Since $\langle\beta\mid\beta'\rangle\not=0$,
$\alpha\in\Delta^{\an}$ and $s_{\alpha}\beta'=\beta$. This gives 1.

Consider the case $\langle\beta\mid\beta'\rangle=0$.
By Lemma~\ref{lem:non-orthogonal} there exists $\delta\in\Delta^{\an}$ which is not orthogonal to both $\beta$ and $\beta'$.
Then $\beta_1:=s_{\delta}\beta$ is an isotropic root 
which is not orthogonal to both $\beta$ and $\beta'$.
Changing the signs if necessary we can assume that 
$\gamma=\beta-\beta_1$ and $\gamma'=\beta'-\beta_1$ are roots.
Since $\beta,\beta_1,\beta'$ are isotropic, $\gamma$ and $\gamma'$
are anisotropic and $s_{\gamma}\beta_1=\beta$, $s_{\gamma'}\beta_1=\beta'$. Therefore $\beta=s_{\gamma'}s_{\gamma}\beta'$. This proves 2.

For 3 note that
$\gamma-\gamma'=\beta-\beta'$  has square length  zero.
Since $\Delta$ is finite, $\Delta^{\an}$ forms a classical root system,
so $\langle\gamma-\gamma'\mid\gamma-\gamma'\rangle=0$ implies 
$\langle\gamma\mid\gamma'\rangle=0$.
Hence $w^2=\id$. Then $w\beta=\beta'$ and $w\beta'=\beta$,
so $wV_1=V_1$ and $wV_0=V_0$. Let $V_2=\Span(\beta,\beta',\beta_1)^\perp$. Then
$V_0=V_2\oplus \C\beta'$. If $v\in V_2$, then $\langle v\mid\gamma\rangle=\langle v\mid\gamma'\rangle=0$, so $wv=v$. Since
$w\beta'-\beta'=\beta-\beta'\in V_1$, one obtains $wv\in v+V_1$ for all $v\in V_0$. This completes the proof.
\end{proof}

\begin{dfn}A subset $S\subset \Delta^\iso$ is called an {\sl iso-set} if
\begin{itemize}
\item[1.] $S=-S$.
\item[2.] $\langle s|t\rangle=0$ for all $s,t\in S$.
\end{itemize}
\end{dfn}

Iso-sets are ordered by inclusion.
For $v\in V$ we denote by $A_v^\mx$ the collection of maximal iso-sets
orthogonal to $v$.

\begin{lem}
\label{lem:trans-isosets}
Let $S,S'\in A^\mx_v$. There exists $w\in\Stab_W(v)$ such that
$S'=w(S)$.
\end{lem}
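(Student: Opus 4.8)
The plan is to prove Lemma~\ref{lem:trans-isosets} by induction on the common cardinality of the maximal iso-sets $S,S'\in A^\mx_v$, using the two-step Weyl group elements produced by Lemma~\ref{lem:two-step}(3). First I would reduce to the case where $\Delta$ is indecomposable: the orthogonal decomposition of $V$ induced by the decomposition of $\Delta$ into indecomposable components splits every iso-set into a product of iso-sets in the components, and $\Stab_W(v)$ is the product of the stabilizers, so the general case follows componentwise. (One should note that if $S,S'\in A^\mx_v$ then $|S|=|S'|$, since any iso-set inside $v^\perp$ of size $<|S|$ can, as we will show inductively, be moved by $\Stab_W(v)$ inside any given maximal one, forcing maximal iso-sets to have equal size; alternatively this is built into the induction.)

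Next, the base case: if $S=S'=\emptyset$ (or $S=S'$) there is nothing to prove. For the inductive step, pick a pair $\pm\beta\in S$ and a pair $\pm\beta'\in S'$. Here a dichotomy, following Lemma~\ref{lem:two-step}. If $\langle\beta\mid\beta'\rangle\ne0$, I would instead first replace $S'$ by an iso-set containing an element \emph{orthogonal} to $\beta$ — this is the delicate point, see below. Once $\beta'$ (or one of its replacements) is orthogonal to $\beta$ and $\beta'\ne\pm\beta$, Lemma~\ref{lem:two-step}(3) applied inside the indecomposable WGRS $\Delta\cap\Span$ of the relevant span — more precisely applied to the ambient $\Delta$ — produces $w=s_{\alpha_2}s_{\alpha_1}\in W$ with $w\beta=\pm\beta'$, $w^2=\id$, and $w|_{V_0/V_1}=\id$ where $V_1=\Span(\beta,\beta')$, $V_0=V_1^\perp$. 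Crucially, since $v\perp\beta$ and $v\perp\beta'$ we have $v\in V_0$, and from $w|_{V_0/V_1}=\id$ together with $wV_1=V_1$ one gets $wv\in v+V_1$; but $wv\in v^\perp$-type considerations — actually since $w$ is an isometry and $v\perp V_1$ while $wv-v\in V_1$, taking the norm gives $\langle wv\mid wv\rangle=\langle v\mid v\rangle+2\langle v\mid wv-v\rangle+\langle wv-v\mid wv-v\rangle$, and pairing $v$ with $wv-v\in V_1$ gives $0$; this does not immediately give $wv=v$, so instead I would argue directly: $w$ fixes $V_2:=\Span(\beta,\beta',\beta_1)^\perp\ni$ pieces of $v$, and since $v\perp\beta,\beta'$ one writes $v=v_2+c\beta_1'$-type decomposition and checks $wv=v$ using the explicit form of $w$ from the proof of Lemma~\ref{lem:two-step}(3). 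Hence $w\in\Stab_W(v)$, and $w(S)$ is a maximal iso-set orthogonal to $v$ containing $\beta'$; replacing $S$ by $w(S)$ we have arranged $\beta'\in S\cap S'$. Then $S\setminus\{\pm\beta'\}$ and $S'\setminus\{\pm\beta'\}$ are maximal iso-sets orthogonal to $v$ inside the smaller WGRS $\Delta\cap(\C\beta')^\perp$... but wait, one must be careful that maximality is preserved; more robustly, work in $V'':=(\beta')^\perp/(\C\beta')$ — though $\beta'$ isotropic makes this a degenerate form, so instead I would keep $v$ fixed and induct on $|S\cap S'|$ being strictly larger, or on $|S\setminus S'|$ decreasing, applying the above to another pair $\pm\beta_0\in S\setminus S'$ while the already-matched $\beta'$ stays put because the $w$'s used are supported away from it (the $\alpha_i$ can be chosen orthogonal to $\beta'$).

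The main obstacle I expect is precisely the case $\langle\beta\mid\beta'\rangle\ne0$ — bringing $S'$ into a position orthogonal to a chosen element of $S$. Lemma~\ref{lem:two-step}(1) says $\beta'=\pm s_\alpha\beta$ for some $\alpha\in\Delta^\an$, but $s_\alpha$ need not stabilize $v$. The resolution should be: since $S'$ is a \emph{maximal} iso-set orthogonal to $v$ and $\beta'\in S'$, while $\beta$ is orthogonal to $v$ and $\langle\beta\mid\beta'\rangle\ne0$, one shows $\beta$ cannot itself lie in $S'$ (it's not orthogonal to $\beta'$), yet by maximality of $S'$ there must be an element of $S'$ interacting with $\beta$ in a controlled way; using indecomposability and Lemma~\ref{lem:non-orthogonal} one finds within $\Stab_W(v)$ an element moving $S'$ so that it now contains an isotropic root orthogonal to $\beta$, e.g. $s_\delta\beta'$ for suitable $\delta\perp v$, reducing to the orthogonal case. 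I would also need the standard fact that for $\beta\in S$, the set $S\setminus\{\pm\beta\}$ is a maximal iso-set orthogonal to $v$ within the WGRS $\{\gamma\in\Delta:\langle\gamma\mid\beta\rangle=0\}$ — or sidestep it by phrasing the whole induction as: among all $w\in\Stab_W(v)$, choose one maximizing $|w(S)\cap S'|$, and derive a contradiction from Lemma~\ref{lem:two-step} if this intersection is not all of $S'$. This ``maximal overlap'' formulation is probably the cleanest way to organize the argument and avoid the bookkeeping about sub-WGRS.
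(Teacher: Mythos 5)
There is a genuine gap at the point you yourself flag as suspicious: the element $w=s_{\alpha_2}s_{\alpha_1}$ produced by Lemma~\ref{lem:two-step}(3) does \emph{not} in general lie in $\Stab_W(v)$. That lemma only gives $w|_{V_0/V_1}=\id$, i.e.\ $wv\in v+\Span(\beta,\beta')$, and your own attempted repair exposes the problem: writing $v=v_2+c\beta'$ with $v_2\in V_2=\Span(\beta,\beta',\beta_1)^\perp$, one gets $wv=v_2+c\beta=v+c(\beta-\beta')$, which equals $v$ only when $c=0$, i.e.\ only when $v\perp\beta_1$. Since $\beta_1=s_\delta\beta$ for an auxiliary anisotropic root $\delta$ chosen with no reference to $v$, there is no reason for this to hold. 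So the engine of your induction step --- moving $\beta$ to $\beta'$ in the orthogonal case while fixing $v$ --- is broken, and the surrounding bookkeeping (preserving already-matched roots, the reduction to indecomposable components, the ``maximal overlap'' variant) does not rescue it.

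The dichotomy is also organized backwards relative to what actually works. The case $\langle\beta\mid\beta'\rangle\ne0$, which you call the delicate point and try to avoid, is in fact the easy and decisive one: for non-orthogonal isotropic roots $\alpha\in S'\setminus S$ and $\beta\in S$, one of $\gamma=\alpha\pm\beta$ is a root by axiom 3 of Definition~\ref{dfn:WGRS}, it is automatically anisotropic, $s_\gamma$ swaps $\alpha$ and $\beta$, and --- this is the point you miss --- $\gamma$ \emph{is} orthogonal to $v$ because both $\alpha$ and $\beta$ are, so $s_\gamma\in\Stab_W(v)$; moreover $s_\gamma$ fixes $S\cap S'$ pointwise, since every element of $S\cap S'$ is orthogonal to both $\alpha$ and $\beta$. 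The existence of such a non-orthogonal pair is exactly what maximality of $S$ guarantees: $S\cup\{\pm\alpha\}$ cannot be an iso-set orthogonal to $v$. The paper's proof is just this: induct on the cardinality of $S'\setminus S$ and apply the single reflection $s_\gamma$; Lemma~\ref{lem:two-step}(3), indecomposability, and the mutually orthogonal case never enter.
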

\begin{proof}We proceed by induction in the cardinality of $S'\setminus S$. Let $\alpha\in S'\setminus S$. Then $S\cup\{\alpha\}$ is not an iso-set, so there exists $\beta\in S$ such that $\langle\alpha|\beta\rangle\ne 0$. This implies that $\alpha\pm\beta\in \Delta$; without loss
of generality we can assume $\gamma=\alpha-\beta\in\Delta$. Obviously,
$\gamma\in \Delta^\an$ so $s_\gamma\in W$. The formula
$
s_\gamma(x)=x-2\langle x|\gamma\rangle/\langle\gamma|\gamma\rangle
$
implies that $s_\gamma$ preserves $v$ and $S\cap S'$, and
$s_\gamma(\alpha)=\beta$.
This implies the claim.
\end{proof}

\begin{crl}
The group $W$ acts transitively on the set of maximal iso-sets.
In particular, all maximal iso-sets have the same cardinality.
\end{crl}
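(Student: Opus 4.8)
The plan is to obtain the corollary as an immediate specialization of Lemma~\ref{lem:trans-isosets} to the zero vector. First I would record the trivial observation that every $x\in V$, and in particular every root, is orthogonal to $0\in V$, since $\langle 0\mid x\rangle=0$. Hence the second defining clause of "iso-set orthogonal to $0$" is automatic, and therefore $A^\mx_0$ coincides with the set of \emph{all} maximal iso-sets contained in $\Delta^\iso$. Likewise, every isometry of $V$ fixes $0$, so $\Stab_W(0)=W$.

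With these identifications in hand the argument is one line: given two maximal iso-sets $S$ and $S'$, we have $S,S'\in A^\mx_0$, so Lemma~\ref{lem:trans-isosets} applied with $v=0$ produces $w\in\Stab_W(0)=W$ with $S'=w(S)$. This is exactly transitivity of the $W$-action on the set of maximal iso-sets. For the second assertion, since $w$ permutes $\Delta$ bijectively, $S'=w(S)$ forces $|S'|=|S|$; as $S,S'$ were arbitrary maximal iso-sets, all of them have the same cardinality.

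There is essentially no obstacle to overcome: the entire combinatorial content is carried by Lemma~\ref{lem:trans-isosets}, and the only point requiring (a completely routine) verification is that maximal iso-sets are the same thing as the maximal elements of $A^\mx_0$ and that $\Stab_W(0)=W$. I would phrase the proof in two or three sentences along the lines just described.
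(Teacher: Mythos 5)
Your proof is correct and is exactly the intended argument: the paper states the corollary with no written proof, treating it as the immediate specialization of Lemma~\ref{lem:trans-isosets} to $v=0$, which is precisely what you carry out (noting $A^\mx_0$ is the set of all maximal iso-sets and $\Stab_W(0)=W$). Nothing further is needed.
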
\qed

\subsection{Equivalence relation defined by a WGRS}
Given a WGRS $\Delta$, we define an equivalence relation $R$ on $V$ as the one generated by the action of $W$ 
and by the condition
$$
\langle v\mid v+c\alpha\rangle\in R\textrm{ for }\ \alpha\in\Delta^\iso, \langle v|\alpha\rangle=0,\ c\in\C.
$$

Since $R$ is generated by linear Lagrangian relations $\Gamma_{s_\alpha}$, $\alpha\in\Delta^\an$, and
$E_{\alpha^\perp}$, $\alpha\in \Delta^\iso$, $R$ is a Lagrangian
equivalence relation by \ref{prp:monoid}. The following result explicitly describes $R$.

\begin{prp}
Let $v\in V$ and let $S\in A_v^\mx$. Then
$$
(v,v')\in R\Longleftrightarrow v'\in W(v+\Span_\C(S)).
$$
\end{prp}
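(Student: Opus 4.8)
Write $\Omega:=W\bigl(v+\Span_\C(S)\bigr)$ and let $[v]_R$ denote the $R$-equivalence class of $v$; the assertion is that $\Omega=[v]_R$. Recall that $R$ is the equivalence relation generated by the relations $\Gamma_{s_\alpha}$ ($\alpha\in\Delta^\an$) and $E_{\alpha^\perp}$ ($\alpha\in\Delta^\iso$), each of which is symmetric; hence $[v]_R$ is precisely the set of vectors obtained from $v$ by finitely many \emph{moves}, a move being either (i) $u\mapsto s_\alpha(u)$ for some $\alpha\in\Delta^\an$, or (ii) $u\mapsto u+c\alpha$ for some $\alpha\in\Delta^\iso$, $c\in\C$ with $\langle u\mid\alpha\rangle=0$. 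The plan is to prove $\Omega\subseteq[v]_R$ (every element of $\Omega$ is reachable from $v$) and $[v]_R\subseteq\Omega$ ($v\in\Omega$ and $\Omega$ is stable under both moves, so one concludes by induction on the number of moves).

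For $\Omega\subseteq[v]_R$: an element of $\Omega$ has the form $w(v+y)$ with $w\in W$ and $y=\sum_{i=1}^{m}c_i\alpha_i$, $\alpha_i\in S$, $c_i\in\C$. Putting $y_j=\sum_{i\le j}c_i\alpha_i$, we have $\langle v+y_{j-1}\mid\alpha_j\rangle=0$ because $S$ is orthogonal to $v$ and consists of mutually orthogonal isotropic roots; hence a move of type (ii) carries $v+y_{j-1}$ to $v+y_j$. Chaining these carries $v$ to $v+y$, and then moves of type (i) realising $w$ as a product of reflections $s_\alpha$ ($\alpha\in\Delta^\an$) carry $v+y$ to $w(v+y)$.

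For $[v]_R\subseteq\Omega$: clearly $v\in\Omega$, and $\Omega$ is stable under (i) since it is a union of $W$-orbits. For (ii), let $u=w(v+x)\in\Omega$ (with $w\in W$, $x\in\Span_\C(S)$) and $\beta\in\Delta^\iso$ with $\langle u\mid\beta\rangle=0$; set $u_0:=v+x$ and $\beta_0:=w^{-1}(\beta)\in\Delta^\iso$, so $\langle u_0\mid\beta_0\rangle=0$ and $u+c\beta=w(u_0+c\beta_0)$. The key point is that $S\in A^\mx_{u_0}$: as $S$ is totally isotropic it is orthogonal to $\Span_\C(S)$, hence to $u_0=v+x$; and an iso-set $S'\supsetneq S$ orthogonal to $u_0$ would (since $x\in\Span_\C(S')$ and $S'$ is totally isotropic) also be orthogonal to $x$, hence to $v=u_0-x$, contradicting $S\in A^\mx_v$. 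Now extend $\{\beta_0,-\beta_0\}$ to an element $S_0\in A^\mx_{u_0}$; by Lemma~\ref{lem:trans-isosets} there is $w'\in\Stab_W(u_0)$ with $w'(S)=S_0$, so $\beta_0=w'(s)$ for some $s\in S$. Then $u_0+c\beta_0=w'(u_0)+c\,w'(s)=w'\bigl(v+(x+cs)\bigr)$, whence $u+c\beta=ww'\bigl(v+(x+cs)\bigr)\in\Omega$. This establishes stability under (ii), hence $[v]_R\subseteq\Omega$. (In particular $\Omega$ does not depend on the choice of $S\in A^\mx_v$, as Lemma~\ref{lem:trans-isosets} also shows directly.)

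The only genuine difficulty is the stability of $\Omega$ under the shift moves: a priori the isotropic root $\beta_0$ orthogonal to $u_0=v+x$ is neither orthogonal to $S$ nor contained in $\Span_\C(S)$. The resolution is the observation that $S$ remains a \emph{maximal} iso-set orthogonal to the shifted point $u_0$, so Lemma~\ref{lem:trans-isosets} lets us carry $\beta_0$ into $S$ by an element of $\Stab_W(u_0)$; the shift by $c\beta_0$ is then absorbed into $v+\Span_\C(S)$ modulo the Weyl group.
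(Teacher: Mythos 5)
Your proof is correct and follows essentially the same route as the paper's: the easy inclusion by chaining moves, and for the converse the key observation that $S$ remains a maximal iso-set orthogonal to the shifted point $v+x$, so that Lemma~\ref{lem:trans-isosets} conjugates the isotropic root $\beta$ into $S$ by an element of the stabilizer. You merely supply a bit more detail (the justification of maximality of $S$ at $v+x$ and the explicit chain for the forward inclusion) than the paper does.
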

\begin{proof}
Obviously the elements of $W(v+\Span_\C(S))$ are equivalent to $v$. 
To prove that any $v'$ equivalent to $v$ lies in $W(v+\Span_\C(S))$,
we will verify that for any $v'\in W(v+\Span_\C(S))$ and any 
$\beta\in \Delta^\iso$ such that $\langle v'|\beta\rangle=0$, one has
$v'+c\beta\in W(v+\Span_\C(S))$. We have $v'=w(v+x)$ where $x\in\Span(S)$. Clearly $S\in A_{v+x}^\mx$ so $w(S)\in A^\mx_{v'}$. By 
Lemma~\ref{lem:trans-isosets} there exists $w'\in W$ such that
$w'(v')=v'$ and $w^{\prime-1}(\beta)\in w(S)$ that is $\beta=w'w(\alpha)$
for some $\alpha\in S$. Finally, we have
$$
v'+c\beta=w'(v')+c\beta=w'w(v+x+c\alpha)\in w'w(v+\Span_\C(S)).
$$
\end{proof}

\begin{crl}
\label{crl:L-description}
\begin{itemize}
\item[1.]The components of $R$ have form
$\Gamma_w\circ E_{S^\perp}$ where $S$ is an iso-set and $w\in W$.
\item[1.]The special coisotropic subspaces of $R$ are
$S^\perp$ where $S$ runs through the set of iso-sets in $R$.
\end{itemize}
\end{crl}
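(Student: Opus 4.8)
The plan is to deduce both assertions directly from the explicit description of $R$ given in the preceding Proposition. Recall that Proposition just above tells us $(v,v')\in R$ iff $v'\in W(v+\Span_\C(S))$ for $S\in A_v^\mx$. First I would observe that the components $L^\alpha$ of $R$ are by definition (see Definition~\ref{dfn:lagrangian-relation} and the discussion of the monoid structure in Proposition~\ref{prp:monoid}) the linear Lagrangian relations obtained from the generators $\Gamma_{s_\alpha}$ ($\alpha\in\Delta^\an$) and $E_{\alpha^\perp}$ ($\alpha\in\Delta^\iso$) by composition. Using Lemma~\ref{lem:L-dec}, every such composite has the form $\Gamma_w\circ E_{V_0}$ with $w$ an isometry of $V$ lying in the Weyl group (since the $\Gamma_{s_\alpha}$ generate $W$ among the atypicality-$0$ pieces) and $V_0=p_1(L^\alpha)$ a coisotropic subspace. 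So the content of part 1 is that $V_0$ must be of the form $S^\perp$ for an iso-set $S$, and conversely that each such $\Gamma_w\circ E_{S^\perp}$ actually occurs.

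For the forward direction, I would argue that $E_{S^\perp}$ is the composition of the idempotents $E_{\alpha^\perp}$ over $\alpha\in S$: indeed $\bigcap_{\alpha\in S}\alpha^\perp=S^\perp$, and since the elements of an iso-set are mutually orthogonal and isotropic, a short computation (or Lemma~\ref{lem:ll_1} applied repeatedly, together with identity~2 in the Weyl-group subsection) shows that composing the $E_{\alpha^\perp}$ in any order gives the idempotent attached to the coisotropic subspace $S^\perp$. Conjugating by $\Gamma_w$ for $w\in W$ and using $E_{s(V_0)}=\Gamma_s\circ E_{V_0}\circ\Gamma_{s^{-1}}$, one sees that all relations of the form $\Gamma_w\circ E_{S^\perp}$ are indeed components of $R$. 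For the claim that no other coisotropic subspaces arise: given any component $L=\Gamma_w\circ E_{V_0}$, the Proposition shows that $L$, acting on a generic $v\in V_0$, moves $v$ by $w(v+\Span_\C(S'))$ for $S'\in A_v^\mx$; comparing with $p_2(K_1)=V_0^\perp$ (Lemma~\ref{lem:ker-ort-im}) forces $V_0^\perp$ to be spanned by an iso-set, i.e. $V_0=S^\perp$.

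Part 2 is then immediate: the special coisotropic subspaces are by definition the $p_1(L^\alpha)$, and by part 1 these are exactly the subspaces $S^\perp$ with $S$ an iso-set (note $p_1(\Gamma_w\circ E_{S^\perp})=p_1(E_{S^\perp})=S^\perp$). I expect the main obstacle to be the bookkeeping in the forward direction of part 1 — namely checking carefully that an arbitrary composite of the generators $\Gamma_{s_\alpha}$ and $E_{\alpha^\perp}$, after being put in the normal form $\Gamma_w\circ E_{V_0}$ of Lemma~\ref{lem:L-dec}, always has $V_0^\perp$ equal to the span of an iso-set rather than merely some isotropic subspace spanned by roots. The key point making this work is that whenever two isotropic roots appearing in such a composite fail to be orthogonal, Lemma~\ref{lem:two-step}(1) lets one absorb the resulting non-orthogonality into the Weyl group factor $w$, so that the idempotent factor is always governed by a genuine iso-set.
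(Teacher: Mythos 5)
Your overall strategy --- analyzing the monoid generated by $\Gamma_{s_\alpha}$ and $E_{\alpha^\perp}$ and reducing each composite to a normal form --- could be made to work, but it is not the paper's route, and as written it has a gap at exactly the point you flag. The paper deduces the Corollary immediately from the Proposition that precedes it: that Proposition exhibits $R$, as a set, as the \emph{finite} union of the linear Lagrangian relations $\Gamma_w\circ E_{S^\perp}$ over $w\in W$ and iso-sets $S$ (if $(v,v')\in R$ then $v'=w(v+x)$ with $x\in\Span_\C(S)$ for some iso-set $S$ orthogonal to $v$, i.e.\ $(v,v')\in\Gamma_w\circ E_{S^\perp}$; the reverse inclusion is the easy half of that Proposition). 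Any component of $R$ is an $n$-dimensional linear subspace of $R$, and a vector space over $\C$ contained in a finite union of subspaces lies in one of them; by equality of dimensions every component therefore \emph{equals} some $\Gamma_w\circ E_{S^\perp}$, and conversely each $\Gamma_w\circ E_{S^\perp}$ is obtained by composing generators. Part 2 then follows by applying $p_1$. No normal-form bookkeeping is needed.

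Two concrete problems with your version. First, Lemma~\ref{lem:L-dec} only produces a decomposition $\Gamma_\gamma\circ E_{V_0}$ with $\gamma\in O(V)$; your parenthetical claim that $\gamma$ may be taken in $W$ ``since the $\Gamma_{s_\alpha}$ generate $W$ among the atypicality-$0$ pieces'' is not an argument --- that the isometry factor can be chosen in $W$ is part of what must be proved, not a consequence of the definition of $W$. Second, the step you defer as ``bookkeeping'' is the actual mathematical content of your route: one must compute $E_{\alpha^\perp}\circ E_{\beta^\perp}$ for non-orthogonal isotropic roots $\alpha,\beta$ and check that it equals $\Gamma_{s_\gamma}\circ E_{\beta^\perp}$ with $\gamma=\pm\alpha\mp\beta\in\Delta^{\an}$ (this settles both issues at once: the idempotent factor stays governed by an iso-set, and the isometry factor lands in $W$), and then run an induction on the length of the word in the generators. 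This is doable --- Lemma~\ref{lem:two-step}(1) and the ``easy fact'' stated before Lemma~\ref{lem:non-orthogonal} supply the needed anisotropic root --- but until it is written out the proof is incomplete, and it reconstructs by hand information that the preceding Proposition already packages for you.
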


\subsection{Semiregularity of $R$}
\label{ss:semiregular}
Let $\Delta$ be a WGRS.
 By Lemma~\ref{lem:two-step}, 
$W$ acts transitively on $\Delta^\iso/\pm 1$. 
Therefore, the equivalence relation $R$  is $1$-semiregular. The following result implies that it is in fact  semiregular.

\begin{prp}
\label{prp:RtoR'}Let $\Delta$ be a WGRS in $V$ and let $\alpha\in \Delta^\iso$. Denote
$V'=V_0/V_1$ where $V_1=\C\alpha$ and $V_0=V_1^\perp$ and let $\Delta'$
be the image of $\Delta\cap V_0$ in $V'$ (we exclude $\pm\alpha$ whose image in $V'$ is zero). Then $\Delta'$ is a WGRS in $V'$ and the corresponding
equivalence relation $R'$ on $V'$ is the one induced on $V'$ by $R$.
\end{prp}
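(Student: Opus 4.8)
The plan is to verify the three axioms of Definition~\ref{dfn:WGRS} for $\Delta'$ and then to check that the equivalence relation attached to $\Delta'$ coincides with $R|_{V_0/V_1}$, using the explicit description of the components of $R$ from Corollary~\ref{crl:L-description} together with Proposition~\ref{prp:red}. First I would fix notation: write $\pi:V_0\to V'=V_0/V_1$ for the projection, note that the bilinear form on $V$ descends to a nondegenerate form on $V'$ since $V_1=\C\alpha$ is isotropic and $V_0=V_1^\perp$ (so $V_1=\mathrm{rad}(\langle\cdot|\cdot\rangle|_{V_0})$, and the quotient form is nondegenerate), and observe that for $\beta\in\Delta\cap V_0$ the length $\langle\pi(\beta)|\pi(\beta)\rangle=\langle\beta|\beta\rangle$, so $\pi$ carries isotropic roots to isotropic roots and anisotropic to anisotropic.

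For the WGRS axioms: axiom~1 ($\Delta'=-\Delta'$) is immediate since $\Delta\cap V_0$ is symmetric. For axiom~2, let $\bar\beta=\pi(\beta)\in\Delta'$ be anisotropic, so $\beta\in\Delta\cap V_0$ is anisotropic, and let $\bar\gamma=\pi(\gamma)\in\Delta'$ be arbitrary with $\gamma\in\Delta\cap V_0$. Then $k_{\bar\beta,\bar\gamma}=2\langle\beta|\gamma\rangle/\langle\beta|\beta\rangle=k_{\beta,\gamma}\in\Z$ by axiom~2 for $\Delta$, and $s_{\bar\beta}(\bar\gamma)=\pi(s_\beta(\gamma))$; the point to check is that $s_\beta(\gamma)\in V_0$, which holds because $s_\beta$ preserves $\beta^\perp\supset\C\alpha$... more carefully, $s_\beta$ fixes $\alpha$ iff $\langle\alpha|\beta\rangle=0$. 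This is the one subtle point: if $\langle\alpha|\beta\rangle\ne 0$ then $s_\beta$ does not preserve $V_1$. However, since $\beta\in V_0=\alpha^\perp$ we have $\langle\alpha|\beta\rangle=0$ automatically, so $s_\beta(\alpha)=\alpha$, hence $s_\beta$ preserves both $V_1$ and $V_0$ and descends to $s_{\bar\beta}$ on $V'$; thus $s_{\bar\beta}(\bar\gamma)=\pi(s_\beta(\gamma))\in\Delta'$. For axiom~3, let $\bar\beta\in\Delta'$ be isotropic and $\bar\gamma\in\Delta'$ with $\langle\bar\beta|\bar\gamma\rangle\ne 0$; lifting, $\beta,\gamma\in\Delta\cap V_0$ with $\langle\beta|\gamma\rangle\ne 0$, so by axiom~3 for $\Delta$ one of $\beta\pm\gamma$ lies in $\Delta$, and since $\beta,\gamma\in V_0$ and $V_0$ is a subspace, that element lies in $\Delta\cap V_0$, so its image $\bar\beta\pm\bar\gamma$ lies in $\Delta'$ (and is nonzero since $\langle\bar\beta\pm\bar\gamma|\bar\beta\rangle$ or the length computation shows it is not in $V_1$; concretely $\beta\pm\gamma\ne\pm\alpha$ as one can check from lengths and pairings). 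Hence $\Delta'$ is a WGRS.

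It remains to identify $R'$, the equivalence relation on $V'$ defined by the WGRS $\Delta'$, with $R|_{V_0/V_1}$ in the sense of Proposition~\ref{prp:red}. By Corollary~\ref{crl:L-description}, the components of $R$ are $\Gamma_w\circ E_{S^\perp}$ for iso-sets $S\subset\Delta^\iso$ and $w\in W$; by Proposition~\ref{prp:red}, $R|_{V_0/V_1}$ has components $L^\alpha/(V_1\times V_1)$ for those $L^\alpha$ with $L^\alpha\subseteq V_0\times V_0$, equivalently $E_{V_0}\circ L^\alpha\circ E_{V_0}=L^\alpha$. For $L^\alpha=\Gamma_w\circ E_{S^\perp}$ this condition forces $\alpha\in S$ (so that $V_1=\C\alpha\subseteq S^\perp{}^\perp=\Span(S)$, making $S^\perp\subseteq V_0$ compatible) and $w(V_0)=V_0$; one then checks that the reduction $L^\alpha/(V_1\times V_1)$ equals $\Gamma_{\bar w}\circ E_{\pi(S\setminus\{\pm\alpha\})^\perp}$ inside $V'\times V'$, where $\bar w$ is the induced isometry of $V'$. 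On the other side, the components of $R'$ are exactly $\Gamma_{w'}\circ E_{(S')^\perp}$ for $w'$ in the Weyl group of $\Delta'$ and iso-sets $S'\subseteq(\Delta')^\iso$. The matching is then: iso-sets $S'$ of $\Delta'$ correspond to iso-sets $S$ of $\Delta$ containing $\pm\alpha$ (via $S'=\pi(S\setminus\{\pm\alpha\})$ and $S=\pi^{-1}(S')\cap\Delta$ adjoined $\pm\alpha$ — here one uses that $\pi$ restricted to $\Delta^\iso\cap\alpha^\perp$ is injective modulo $\pm\alpha$ on iso-sets, since distinct isotropic roots in a common iso-set are linearly independent and stay so mod $\C\alpha$), and the Weyl group of $\Delta'$ is generated by the $s_{\bar\beta}$ for $\beta\in\Delta^\an\cap V_0$, which by the argument above are precisely the reductions of the $s_\beta\in\Stab_W(\C\alpha)$ acting trivially enough on $V_1$. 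Comparing the two lists of components gives $R'=R|_{V_0/V_1}$.

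The main obstacle I expect is the bookkeeping in the last paragraph: showing that the surjection from iso-sets-of-$\Delta$-containing-$\alpha$ to iso-sets-of-$\Delta'$ is a bijection (injectivity needs that no nonzero multiple of $\alpha$ is a $\Z$- or $\C$-combination forcing collapse, which follows from isotropic roots in an iso-set being linearly independent), and that the induced Weyl group of $\Delta'$ is exactly $\Stab_W(\C\alpha)$ modulo the kernel of its action on $V_0/V_1$ — one inclusion is Lemma~\ref{lem:two-step}(3) applied with $\beta=\alpha$, which supplies, for any second isotropic root $\beta'$, an element $w\in W$ with $w(\alpha)=\pm\beta'$, $w^2=\id$, $w|_{V_0/V_1}=\id$, and the reverse inclusion (that every $s_{\bar\beta}\in W'$ lifts) is the axiom-2 verification above. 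Everything else is routine linear algebra with the quotient form.
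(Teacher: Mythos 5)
Your verification that $\Delta'$ is a WGRS is fine (more detailed than the paper, which declares the axioms obvious), and the direction ``every component of $R'$ lifts to a component of $R$ contained in $V_0\times V_0$'' is essentially right. The gap is in the other direction, at the sentence ``For $L^\alpha=\Gamma_w\circ E_{S^\perp}$ this condition forces $\alpha\in S$ \dots\ and $w(V_0)=V_0$.'' The descent condition $L^\alpha\subset V_0\times V_0$ says $S^\perp\subset\alpha^\perp$ and $w(S^\perp)\subset\alpha^\perp$, i.e.\ $\alpha\in\Span(S)$ and $w^{-1}(\alpha)\in\Span(S)$; it does \emph{not} force $w^{-1}(\alpha)\in\C\alpha$, hence does not force $w(V_0)=V_0$. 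For instance, for the root system of $\fgl(2|2)$ take $\alpha=\epsilon_1-\delta_1$, $S=\{\pm(\epsilon_1-\delta_1),\pm(\epsilon_2-\delta_2)\}$, and $w$ the Weyl group element swapping $\epsilon_1\leftrightarrow\epsilon_2$ and $\delta_1\leftrightarrow\delta_2$: the component descends, yet $w$ moves $\C\alpha$. In that situation $w$ induces no isometry $\bar w$ of $V'=V_0/V_1$, so your identification of the reduced component with $\Gamma_{\bar w}\circ E_{\pi(S\setminus\{\pm\alpha\})^\perp}$ breaks down, and it is not a priori clear that the reduced component is a component of the relation defined by $\Delta'$ at all.

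This is precisely the case the paper's proof is devoted to: when $w^{-1}(\alpha)\ne\pm\alpha$, Lemma~\ref{lem:two-step}(3) supplies an involution $y\in W$ exchanging $\alpha$ and $w^{-1}(\alpha)$ and acting trivially on $\Span(\alpha,w^{-1}(\alpha))^\perp/\Span(\alpha,w^{-1}(\alpha))$, hence on $S^\perp/\Span(S)$; therefore $E_{S^\perp}\circ\Gamma_y=E_{S^\perp}$ and the component can be rewritten with $w$ replaced by $wy^{-1}$, which fixes $\alpha$ --- only then does one reduce modulo $V_1$. You do invoke Lemma~\ref{lem:two-step}(3), but only for the Weyl-group bookkeeping at the end, not for this step, so the central difficulty of the proposition is left unaddressed. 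The remaining matching (iso-sets of $\Delta'$ versus iso-sets of $\Delta$ containing $\alpha$, and the generators of the Weyl group of $\Delta'$) is consistent with the paper once this is repaired.
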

\begin{proof}

Obviously $\Delta'=-\Delta'$ and the projection $V_0\to V_0/V_1$ preserves the 
bilinear form (it is degenerate on $V_0$). The conditions 1 --- 3
of Definition~\ref{dfn:WGRS} obviously hold, so $\Delta'$ is a WGRS.
Let us denote (temporarily) by $R''$ the equivalence relation on $V'$
induced by $R$. It remains to verify that the relation $R'$
defined by $\Delta'$ coincides with $R''$.

By~\ref{crl:L-description} all components of $R$ have form
$E_{S^\perp}\circ\Gamma_w$ where $S$ is an iso-set and $w\in W$.
By Proposition~\ref{prp:red}, such a component gives rise to an irreducible component of $R''$ iff
$S^\perp\subset\alpha^\perp$ and $w(S^\perp)\subset\alpha^\perp$, that 
is, iff $\{\alpha,w^{-1}(\alpha)\}\in S$.

If $\alpha=w^{-1}(\alpha)$, this
means that $w\in\Stab(\alpha^\perp)$ and $\alpha\in S$, so that the 
component
$E_{S^\perp}\circ\Gamma_w$ determines a component of $R'$. 
If $\alpha\ne w^{-1}(\alpha)$, 
there exists, by Lemma~\ref{lem:two-step}, an element $y\in W$ such that
$y(\alpha)=w^{-1}(\alpha)$ and $y(w^{-1}(\alpha))=\alpha$, and such that
$y$ acts trivially on the quotient $(\alpha,w^{-1}(\alpha))^\perp/
\Span(\alpha,w^{-1}(\alpha))$. This implies that $y$ acts also trivially
on the subquotient $S^\perp/\Span(S)$, so that 
$E_{S^\perp}\circ\Gamma_y=E_{S^\perp}$. Finally,
$E_{S^\perp}\circ\Gamma_w=E_{S^\perp}\circ\Gamma_y\circ\Gamma_{wy^{-1}}=
E_{S^\perp}\circ\Gamma_{wy^{-1}}$ and $wy^{-1}(\alpha)=\alpha$ which reduces this case to the case $w(\alpha)=\alpha$.
This proves the claim.
\end{proof}

\end{document}